\documentclass[journal]{IEEEtran}

\usepackage{xcolor}
\usepackage[font=small]{caption}
\usepackage{amsmath,amsthm,amssymb,amsfonts}
\usepackage[normalem]{ulem}   
\usepackage{bm}
\usepackage{algorithm} 
\usepackage{algorithmic}
\usepackage{comment}
\usepackage[hidelinks]{hyperref}
\usepackage{tabularx}
\usepackage{booktabs}
\usepackage{bbold}
\usepackage{mathtools}
\usepackage{tcolorbox}
\usepackage{bbm}
\usepackage{todonotes}
\usepackage{tensor}
\usepackage{caption}
\usepackage{subcaption}
\usepackage{graphicx}
\usepackage{enumitem}
\setlist{nolistsep,leftmargin=*,labelindent=0pt}
\usepackage{threeparttable}

\newcommand{\tr}{\mathsf{ T}}

\newtheorem{theorem}{Theorem}
\newtheorem{definition}{Definition}

\newtheorem{remark}{Remark}
\newtheorem{lemma}{Lemma}

\newtheorem{proposition}{Proposition}
\newtheorem{example}{Example}

\newtheorem{assumption}{Assumption}

\DeclareMathOperator*{\argmin}{\arg\!\min}

\definecolor{moccasin}{rgb}{0.98, 0.92, 0.84}
\newtcolorbox{mybox}{colback=moccasin,
colframe=moccasin}

\usepackage[nameinlink]{cleveref}
\crefname{equation}{}{}
\crefname{theorem}{Theorem}{Theorems}
\crefname{corollary}{Corollary}{Corollaries}
\crefname{example}{Example}{Examples}
\crefname{assumption}{Assumption}{Assumptions}
\crefname{lemma}{Lemma}{Lemmas}
\crefname{proposition}{Proposition}{Propositions}
\crefname{figure}{Figure}{Figures}
\crefname{table}{Table}{Tables}
\crefname{section}{Section}{Sections}
\crefname{appendix}{Appendix}{Appendices}
\Crefname{equation}{}{}
\Crefname{theorem}{Theorem}{Theorems}
\Crefname{corollary}{Corollary}{Corollaries}
\Crefname{example}{Example}{Examples}
\Crefname{lemma}{Lemma}{Lemma}
\Crefname{proposition}{Proposition}{Propositions}
\Crefname{figure}{Figure}{Figures}
\Crefname{table}{Table}{Tables}
\Crefname{section}{Section}{Sections}
\Crefname{appendix}{Appendix}{Appendices}

\newcommand{\CAP}{\texttt{CAP}}

\newcommand{\col}{\textnormal{col}}

\newcommand{\z}{\textnormal{z}}
\newcommand{\K}{\textnormal{K}}
\newcommand{\D}{\textnormal{d}}

\allowdisplaybreaks

\ifCLASSINFOpdf

\else
\fi

\makeatletter
\@ifpackageloaded{cite}{}{\usepackage{cite}}
\makeatother

\begin{document}

\title{On the Existence of Koopman Linear
Embeddings for Controlled Nonlinear Systems}


\author{
\thanks{}
}

\author{Xu Shang \quad Masih Haseli \quad Jorge Cort\'es \quad Yang Zheng 
\thanks{{This work is supported by NSF CMMI 2320697, NSF CAREER 2340713, and ONR Award N00014-23-1-2353.} }
\thanks{X. Shang, J. Cort\'es, and  Y. Zheng are with the Contextual Robotics Institute, UC San Diego, \{x3shang,
cortes,zhengy\}ucsd.edu. M. Haseli is with the Department of Computing and Mathematical Sciences, California Institute of Technology, mhaseli@caltech.edu.} }

\markboth{Journal of \LaTeX\ Class Files,~Vol.~14, No.~8, August~2015}%
{Shell \MakeLowercase{\textit{et al.}}: Bare Demo of IEEEtran.cls for IEEE Journals}

\maketitle

\begin{abstract}
Koopman linear representations have become a popular tool for control design of  nonlinear systems, yet it remains unclear when such representations are exact. In this paper, we establish sufficient and necessary conditions under which a controlled nonlinear system admits an exact finite-dimensional Koopman linear representation, which we term \textit{Koopman linear embedding}. We show that such a system must be transformable into a special control-affine preserved (\CAP{}) structure, which enforces affine dependence of the state on the control input and isolates all nonlinearities into an autonomous subsystem. We further prove that this autonomous subsystem must itself admit a finite-dimensional Koopman linear model with a sufficiently-rich Koopman invariant subspace. Finally, we introduce a symbolic procedure to determine whether a given controlled nonlinear system admits the \CAP{} structure, thereby elucidating whether Koopman approximation errors arise from intrinsic system dynamics or from the choice of lifting~functions.
\end{abstract}

%

\IEEEpeerreviewmaketitle

\vspace{-3mm}
\section{Introduction}

Amid rapid technological innovation, modern dynamical systems are becoming increasingly complex. Their inherent nonlinearities pose significant challenges for analysis and control. While controllers can in principle be designed directly from nonlinear models \cite{khalil2002nonlinear}, such methods are often computationally demanding, and it can be difficult to obtain performance guarantees for them, as exemplified by nonlinear model predictive control (MPC) \cite{rawlings2017model}. As a result, there has been growing interest in synthesizing controllers using linear, or approximately linear, representations of nonlinear systems. Linear models enable the use of mature linear control techniques and facilitate the construction of computationally efficient linear MPC schemes.

One prominent approach for obtaining such linear representations is based on Koopman operator theory \cite{koopman1931hamiltonian}, 
which lifts nonlinear dynamics into an infinite-dimensional space where the evolution becomes linear. Koopman-based methods have demonstrated promising performance across a range of practical control applications \cite{haggerty2023control, mamakoukas2021derivative,shi2026koopman,Shang2025KoopmanMPC}. Koopman operator theory was originally developed for the analysis of autonomous dynamical systems. It provides a linear, infinite-dimensional operator that describes the evolution of observable functions along system trajectories \cite{koopman1931hamiltonian}. This linear representation enables the analysis of complex nonlinear dynamics using tools from linear systems theory and has led to significant developments in areas such as stability analysis \cite{mauroy2016global}, fluid dynamics \cite{mezic2013analysis}, and hybrid systems \cite{katayama2025koopman}; see \cite{mezic2021koopman} for an overview. 

Beyond its infinite-dimensional nature, another fundamental limitation of the classical Koopman operator, however, is its reliance on system autonomy. When control inputs are present, the state evolution depends explicitly on external signals, and the Koopman operator is no longer applicable in its original form. To address this issue, several operator-theoretic extensions have been proposed to incorporate control inputs. In particular, the work \cite{korda2018linear} augments the state with the space of infinite control sequences, thereby embedding the controlled system into an autonomous one to which Koopman operator can be applied. Another work~\cite{haseli2023modeling} introduces a notion of Koopman control family (KCF), which characterizes controlled dynamics via a collection of Koopman operators corresponding to constant inputs. While these extensions provide a conceptual framework for analyzing controlled nonlinear systems, practical system analysis and controller synthesis cannot be carried out directly at the operator level. 

In practice, one seeks \textit{finite-dimensional} Koopman-based models that approximate the action of the Koopman operator on a chosen set of lifting functions (\emph{i.e.}, observables). This approach leads to explicit state-space models in a high-dimensional lifted space at the cost of incurring modeling errors. For autonomous systems, approximating the Koopman operator with a finite-dimensional linear model is natural, as an exact infinite-dimensional linear representation is guaranteed to exist \cite{mauroy2020koopman}. One may even be able to construct exact finite-dimensional Koopman linear models when systems have special structures \cite{jungers2019non}. For controlled systems, however, exact Koopman linear models generally do not exist, even in infinite dimensions \cite{iacob2024koopman}. This limitation has motivated the development of alternative Koopman-based model classes. For example, Koopman linear time-varying models are introduced in \cite{iacob2024koopman}, with exact formulations derived in \cite{iacob2023finite,iacob2025exact} for polynomial systems and interconnections of linear and nonlinear blocks, respectively. Koopman bilinear models have been proposed for control-affine systems \cite{goswami2021bilinearization,strasser2023control}. Another perspective treats the input as a sequence of constant values, thereby reducing the controlled system to a switching family of autonomous systems \cite{sootla2017pulse,peitz2019koopman}. More recently, the KCF framework \cite{haseli2023modeling} provides an input-state separable representation that includes Koopman linear and bilinear models as special cases.

Despite the availability of richer model classes, the \textit{finite-dimensional} Koopman linear model remains a widely used representation for controlled systems due to its simplicity and its direct compatibility with established linear control techniques. For example, Koopman linear quadratic regulator (LQR) \cite{brunton2016koopman} and Koopman MPC \cite{korda2018linear} are developed for nonlinear systems admitting exact Koopman linear models, with closed-loop exponential stability guarantees established in \cite{shang2025exponential}. As mentioned above, general controlled nonlinear systems do not admit an exact Koopman linear representation. This has motivated recent theoretical work on understanding and mitigating the impact of modeling error. When the approximation error is sufficiently small, local exponential stability of the Koopman LQR is shown in \cite{shang2025exponential}. For Koopman MPC, input-to-state stability and local exponential stability are established in \cite{de2024koopman,shang2025exponential}, via suitable adaptations of linear MPC. In addition, constraint satisfaction in the MPC framework under model mismatch is addressed in \cite{zhang2022robust} and \cite{mamakoukas2022robust} by enforcing more conservative surrogate constraints in the lifted state space. From a practical standpoint, although the modeling error generally exists, Koopman MPC and its variants have been widely applied in robotics, including robotic manipulations \cite{zhang2023online}, ground robots \cite{vsvec2023predictive}, and soft robots~\cite{haggerty2023control}. A comprehensive overview of these developments can be found in \cite{shi2026koopman}. 

Despite these significant theoretical and practical advances, two fundamental challenges remain open: 
\begin{enumerate}
    \item determining whether a given controlled nonlinear system admits an exact finite-dimensional Koopman linear model; 
    \item synthesizing suitable lifting functions to obtain an accurate or approximate finite-dimensional Koopman linear model.
\end{enumerate}
The first challenge is particularly critical, as it reveals whether the modeling error arises from intrinsic system dynamics or from an inadequate choice of lifting functions, thereby elucidating whether further effort in refining lifting functions can improve model accuracy.  Moreover, when the existence of an exact Koopman linear model is guaranteed, data-driven approaches such as those proposed in \cite{shang2024willems,shang2025regularization} can be leveraged to bypass the identification of explicit lifting functions.

\vspace{-2mm}
\subsection{Our contributions}

In this work, we provide necessary and sufficient criteria for determining whether a controlled nonlinear system admits an exact \textit{finite-dimensional} Koopman linear model, referred to as \textit{Koopman linear embedding}. Our central insight is that the existence of such a model imposes a strong structural constraint: the system state must evolve linearly with respect to the control input at all time steps. 

Building on this observation, we first introduce and study the notion of \textit{N-step linear predictability} for nonlinear systems. We show that $N$-step linear predictability is equivalent to the existence of a control-affine preserved (\CAP{}) structure under a suitable coordinate transformation (\Cref{them:multi-steps}). This \CAP{} structure decomposes the dynamics into two interacting subsystems: an autonomous nonlinear subsystem and a control-driven subsystem that evolves linearly in its substate and input, with nonlinear terms induced solely by the autonomous component. We then show that the existence of a Koopman linear embedding further requires the autonomous nonlinear subsystem to admit its own exact Koopman linear representation, with a finite-dimensional invariant subspace whose span absorbs all nonlinearities of the full system (\Cref{them:accur-Koop}). Together, these results yield a complete characterization of when Koopman linear embeddings exist for nonlinear systems.

Establishing these results requires several key proof techniques. In particular, to characterize nonlinear systems that admit $N$-step linear predictability, we~introduce a principle of affine composition that characterizes when affine dependence on the control input is preserved under recursive composition of the dynamics (\Cref{lem:affine-from-composition,lem:part-control-affine,lemma:func-affine-rel}). Leveraging this principle, we construct a sequence of coordinate transformations which progressively concentrate all nonlinearities into an autonomous subsystem (\Cref{proposition:k+1-linear-predictor}), yielding the \CAP{} structure in \Cref{them:multi-steps}. We further show that any nonlinear system admitting a Koopman linear embedding can also be represented by another Koopman linear model that is observable (\Cref{lem:koop-ob}) and whose lifting functions satisfy a strict separation between linear and nonlinear components (\Cref{lem:koop-seper}). By exploiting the equivalence between the Koopman multi-step predictions and those induced by the \CAP{} structure, we establish the required properties of the autonomous subsystem in \Cref{them:accur-Koop}.

Finally, we introduce a symbolic procedure in \Cref{alg:multi-check,alg:SCD} to verify if a given system can be transformed into the \CAP{} structure. This procedure provides a principled way to assess the source of Koopman approximation errors and to inform lifting function design.
In particular, if a \CAP{} structure does not exist, the approximation error is intrinsic to the system dynamics, suggesting fundamental limitations on the accuracy of Koopman linear embeddings. Conversely, if a \CAP{} structure exists, then suitable lifting functions may, in principle, achieve improved prediction performance, guided by the nonlinear components identified in the \CAP{} decomposition.

\vspace{-2mm}
\subsection{Paper structure and notation}
The remainder of this paper is structured as follows. \Cref{sec:preliminary} provides a brief review of Koopman operator theory and our problem formulation. \Cref{sec:main-theorem} presents two main technical results on 1) the equivalence between \CAP{} structure and $N$-step linear predictability and 2) sufficient and necessary conditions for Koopman linear embeddings. \Cref{sec:accurate-multi-step} and \Cref{sec:accurate-Koopman} provide detailed proofs for the two main theorems. A verification procedure for the \CAP{} structure is introduced in \Cref{sec:alg-cerification}. Finally, \Cref{sec:conclusion} concludes the paper. 

\textbf{Notation:} We represent the set of natural numbers~as~$\mathbb{N}$. For a series of vectors $a_1,\ldots, a_n$ and matrices $A_1, \ldots, A_n$, we denote $\col(a_1, \ldots, a_n) \!:=\! [a_1^\tr, \ldots, a_n^\tr]^\tr$ and $\col(A_1, \ldots, A_n) \!:=\! [A_1^\tr, \ldots, A_n^\tr]^\tr$. We represent the $2$-norm of a vector $a$ as $\|a\|_2$ and the Frobenius norm of a matrix $A$ as $\|A\|_F$. For a product of matrices, we use the convention $\prod_{i=n}^m A_i = A_{m} A_{m-1}\cdots A_n$ with $m \ge n \ge 0$. Suppose $x \in \mathbb{R}^n$ is the state of a dynamic system and we separate it into $k$ groups as $x := \col(x_1, \ldots, x_k)$ where $x_1 \in \mathbb{R}^{n_1}, \ldots, x_k \in \mathbb{R}^{n_k}$ and $n_1 + \cdots + n_k = n$. We represent $\col(x_i, x_{i+1}, \ldots, x_j)$ ($i \le j \le k$) as $x_{i:j}$ and denote $x(t) := \col(x_1(t), \ldots, x_k(t))$ as the system state at time step $t$. The term $x_{i:j}$ vanishes when $j < i$. 
We use $\mathbb{0}$ and $I_n$ to denote the zero matrix with compatible size and identity matrix with size $n$, respectively.

\section{Preliminaries and problem statement}
\label{sec:preliminary}
In this section, we briefly review the Koopman operator for autonomous systems and the notions of Koopman control family and Koopman linear embedding for controlled systems. After this, we formalize our problem statement. 

\vspace{-2mm}
\subsection{Koopman operator for autonomous systems}
Consider a discrete-time autonomous dynamical system 
\begin{equation}
    \label{eqn:nonlinear-auto}
    x^+ = f(x),
\end{equation}
where $x \in \mathcal{X}\subseteq \mathbb{R}^n$ denotes the system state and $f: \mathcal{X} \to \mathcal{X}$ is the function describing the system~dynamics. Let $\mathcal{H}$ be a function space consisting of functions (called \emph{observables}) $\psi:\mathcal{X} \rightarrow \mathbb{C}$. We assume that $\mathcal{H}$ is closed under composition with the dynamics $f$, that is, $\psi \circ f \in \mathcal{H}$ for all $\psi \in \mathcal{H}$. The Koopman operator $\mathcal{K}:\mathcal{H} \rightarrow \mathcal{H}$ acts on the observables~via 
\begin{equation} \label{eq:Koopman-operator}
\mathcal{K} \psi(x) = \psi(f(x)).
\end{equation}
In other words, the Koopman operator evaluates an observable $\psi$ along the forward trajectories of the nonlinear system \cref{eqn:nonlinear-auto}. 

By construction, the Koopman operator \cref{eq:Koopman-operator} is linear, \emph{i.e.}, for any $\psi_1, \psi_2 \in \mathcal{H}$ and  $\alpha, \beta \in \mathbb{C}$, 
\begin{equation} \label{eq:linear-Koopman}
\mathcal{K}(\alpha \psi_1 + \beta \psi_2) = \alpha \mathcal{K} \psi_1 +\beta \mathcal{K} \psi_2.  
\end{equation}
Since $\mathcal{K}$ is a linear operator, we can study its spectral properties. We say the function $\psi \in \mathcal{H}$ is an eigenfunction of $\mathcal{K}$ with eigenvalue $\lambda \in \mathbb{C}$ if
$
\mathcal{K}\psi = \lambda \psi. 
$ 
By definition, it is easy to see that a Koopman eigenfunction $\psi$ evolves linearly on any system trajectory, \emph{i.e.}, 
$$
\psi(x(t)) = \lambda \psi(x(t-1)) = \lambda^t \psi(x(0)), \quad \forall t \in \mathbb{N}, x(0) \in \mathcal{X}. 
$$

We next define the concept of Koopman invariant subspace. A subspace $\mathcal{S} \subseteq \mathcal{H}$ is \textit{Koopman invariant} if  $\mathcal{K} \psi \in \mathcal{S}$ for all $\psi \in \mathcal{S}$. When such a subspace is finite-dimensional, the action of the Koopman operator can be represented by a matrix once a basis is chosen. Specifically, let $\Psi:= \col(\psi_1, \ldots, \psi_{n_s})$ maps from $\mathcal{X}$ to $\mathbb{C}^{n_s}$ be a basis of $\mathcal{S}$, where $n_s$ is its dimension. Then, there exists a matrix $A \in \mathbb{C}^{n_s\times n_s}$ such that 
\[
\mathcal{K} \Psi(x) = \Psi(f(x)) = A \Psi(x).
\]
For any function $\psi \in \mathcal{S}$, we can write $\psi = w^\tr \Psi$ for some $w \in \mathbb{C}^{n_s}$ and we have 
$$
\mathcal{K} \psi(x) = w^\tr \mathcal{K}\Psi(x) = w^\tr A \Psi(x),
$$
showing that every observable in $\mathcal{S}$ evolves linearly along system trajectories. Let $z := \Psi(x) \in \mathbb{C}^{n_s}$ denote the lifted state. Since we have 
$z^+ = \Psi(x^+) = \mathcal{K} \Psi(x) = A \Psi(x) = Az $, when the state $x$ of nonlinear system \eqref{eqn:nonlinear-auto} is also included in the space $\mathcal{S}$, it admits a Koopman linear representation 
\begin{equation}
\label{eqn:Koop-linear-auto}
z^+ = Az, \quad  x = Cz.
\end{equation}
That enables the analysis of the original nonlinear dynamics using techniques from linear system theory \cite{mezic2005spectral}. 

When the finite-dimensional subspace $\mathcal{S}$ is not invariant, we can approximate the operator's action via projection operators. Let $P_\mathcal{S}$ be a projection operator on $\mathcal{S}$. Instead of $\mathcal{K}$, one can use the approximation operator $P_\mathcal{S} \mathcal{K}$ which admits $\mathcal{S}$ as an invariant subspace. A popular data-driven version of this projection-based approximation is the EDMD method~\cite{williams2015data}. Given a state sequence $x_0, \ldots, x_{n_\D-1}$, we organize it with the associated lifting state for a pre-selected basis $\Psi$ of $\mathcal{S}$ as 
\begin{equation}
    \label{eqn:EDMD-auto-traj}
    \begin{aligned}
    X \!=\! \begin{bmatrix}
        x_0, \ldots, x_{n_\D-2}
    \end{bmatrix}, & \quad Z \!=\! \begin{bmatrix}
        \Psi(x_0),\ldots,\Psi(x_{n_\D-2})
    \end{bmatrix}, \\
     X^+ \!=\! \begin{bmatrix}
        x_1, \ldots, x_{n_\D-1}
    \end{bmatrix}, & \quad 
     Z^+ \!=\! \begin{bmatrix}
        \Psi(x_1), \ldots, \Psi(x_{n_\D-1})
    \end{bmatrix}.
    \end{aligned}
\end{equation}
The EDMD estimate of the Koopman matrix is obtained by solving the following least-squares problems 
\begin{equation}
\label{eqn:EDMD-auto-least}
A \in \argmin_{A} \ \|Z^+ - AZ\|_F^2, \ \ C \in \argmin_{C} \ \|X - CZ\|_F^2.
\end{equation} 
It is known that autonomous nonlinear systems always admit an \textit{infinite-dimensional} Koopman linear representation \cite{mauroy2020koopman}. Consequently, as the dimension of the chosen subspace $\mathcal{S}$ (\emph{i.e.}, number of lifting functions) and the amount of data $n_\D$ increase, the \emph{finite-horizon} prediction error of the EDMD-based Koopman linear representation asymptotically converges to zero; see \cite[Theorem 5]{korda2018convergence}. However, it should be noted that this convergence is not monotonic~\cite{MH-JC:25-access}.

\subsection{Koopman Control Family and Koopman linear embedding} 
The Koopman operator framework has been extended to controlled systems in several different ways \cite{korda2018linear,haseli2023modeling}, which have recently been shown to be equivalent in \cite{MH-IM-JC:25-tac}. We here review the notion of Koopman control family introduced in~\cite{haseli2023modeling}. Consider a discrete-time controlled nonlinear system
\begin{equation}
\label{eqn:nonlinear}
x^+ = f(x, u),
\end{equation}
where $x \in \mathcal{X} \subseteq \mathbb{R}^n$ and $u \in \mathcal{U} \subseteq \mathbb{R}^m$ denote the state and input of the system, and $f:\mathcal{X} \times \mathcal{U} \rightarrow \mathcal{X}$ is the function describing the system behavior. Fixing the input at constant values $u^*$ gives us a collection of autonomous systems 
\[
x^+ = f_{u^*}(x) := f(x, u^*), \quad u^* \in \mathcal{U}.
\]
The Koopman Control Family~\cite{haseli2023modeling} is defined as the family of operators $\{\mathcal{K}_{u^*}: \mathcal{H} \rightarrow \mathcal{H}\}_{u^* \in \mathcal{U}}$, where the function space $\mathcal{H}$ is assumed to be closed under composition with $f_{u^*}$ for all $u^* \in \mathcal{U}$. Each operator $\mathcal{K}_{u^*}$ acts on an observable $\psi \in \mathcal{H}$ as 
\begin{equation} \label{eq:Koopman-control-family}
\mathcal{K}_{u^*} \psi(x) = \psi(f_{u^*}(x)) = \psi(f(x, u^*)).
\end{equation}

The nonlinear system \eqref{eqn:nonlinear} admits an input-state separable form when a finite-dimensional common invariant subspace $\mathcal{S} \subseteq \mathcal{H}$ exists. The subspace $\mathcal{S} \subseteq \mathcal{H}$ is a \textit{common invariant subspace} if $\mathcal{K}_{u^*} \psi \in \mathcal{S}$ for all $u^* \in \mathcal{U}$ and $\psi \in \mathcal{S}$. Similar to the autonomous case, let $\Psi$ be a basis of $\mathcal{S}$ with dimension~$n_s$ and $z:=\Psi(x)$ is the lifted state. Then, there exists a function $\mathcal{A}:\mathcal{U}\! \rightarrow\! \mathbb{C}^{n_s \times n_s}$ such that, for all $u^* \! \in\!  \mathcal{U}$ (\!\cite[Theorem 4.3]{haseli2023modeling}),
\[
\mathcal{K}_{u^*} \Psi(x) = \Psi(f_{u^*}(x)) = \mathcal{A}(u^*) \Psi(x). 
\]
Thus, if the Koopman control family \cref{eq:Koopman-control-family} admits a common invariant subspace $\mathcal{S}$ with basis $\Psi$ and the state of nonlinear system \cref{eqn:nonlinear} is contained in $\mathcal{S}$, it admits the input-state separable form below (\cite[Theorem 4.3]{haseli2023modeling})
\begin{equation}
\label{eqn:input-sep}
z^+ = \mathcal{A}(u)z, \quad x = Cz,
\end{equation}
When the subspace $\mathcal{S}$ is not common invariant, one can approximate the action of Koopman Control Family and the input-state separable form~\eqref{eqn:input-sep} via projection approximations similarly to the autonomous case~\cite[Section 8]{haseli2023modeling}.

The input-state separable form~\cref{eqn:input-sep} is much more structured compared with the original nonlinear form \cref{eqn:nonlinear}. However, unlike the autonomous case \cref{eqn:Koop-linear-auto}, it is still not ready to apply linear system tools to analyze or design controllers for \cref{eqn:input-sep} due to the coupling between the input $u$ and the lifted state $z$. In practical situations, the concept of \textit{Koopman linear embedding} has been widely used \cite{korda2018linear,haggerty2023control, mamakoukas2021derivative,shang2024willems,Shang2025KoopmanMPC,shang2025exponential}. This can be viewed as a special input-state separable form in which the evolution of the lifting function becomes linear with respect to both the lifted state $z$ and the input $u$. We provide a definition as follows \cite[Definition 1]{shang2024willems}.
\vspace{-1mm}
\begin{definition}[Koopman linear embedding]  
\label{def:Koopman-linear}
The nonlinear system \eqref{eqn:nonlinear} admits a Koopman linear embedding if there exists a set of linearly independent lifting functions\footnote{As the state $x$ and input $u$ are real-valued, we work with real-valued lifting function $\Psi$ and parameters $A, B, C$, without loss of generality. Specifically, if the system admits a Koopman linear embedding with complex-valued parameters or lifting functions, there exists a corresponding real-valued one.} $\psi_1(\cdot), \ldots, \psi_{n_\z}(\cdot): \mathcal{X} \rightarrow \mathbb{R}$  such that, for all $u \in \mathcal{U}$, we have 
\begin{equation}
\label{eqn:lifted-state}
\Psi(x^+) = \Psi(f(x,u)) = A\Psi(x) + Bu, \quad x = C\Psi(x),
\end{equation} 
where $\Psi(\cdot): = \col(\psi_1(\cdot),\ldots, \psi_{n_\z}(\cdot))$ and $A \in \mathbb{R}^{n_\z \times n_\z}, B \in \mathbb{R}^{n_\z \times m}, C \in \mathbb{R}^{n \times n_\z}$ are constant matrices. 
\end{definition}

Informally, the value of the lifting functions $\Psi$ evolves~linearly along system trajectories for any input $u$. For practical applications, we also require that the original system state $x$ is in the image of the lifting $\Psi$ \cite{shang2025exponential}, \emph{i.e.}, $x = C \Psi(x)$ with a constant matrix $C \in \mathbb{R}^{n \times n_\z}$. In this case, we define $z := \Psi(x)$ as a lifted state, and the nonlinear system \cref{eqn:nonlinear} admits a Koopman linear model
\vspace{-2mm}
\begin{equation}
\label{eqn:Koopman-linear}
z^+ = Az + Bu, \quad x = Cz.
\vspace{-2mm}
\end{equation}
Note that the Koopman linear model \eqref{eqn:Koop-linear-auto} can be viewed as a special case of the Koopman linear embedding~\cref{eqn:lifted-state}~with~zero input. Even when an exact Koopman linear embedding does not exist, the approximate Koopman linear representation \eqref{eqn:Koopman-linear} has been widely used in predictive control applications to model nonlinear dynamics \eqref{eqn:nonlinear}; see, \emph{e.g.}, \cite{korda2018linear, haggerty2023control, mamakoukas2021derivative,Shang2025KoopmanMPC,shang2025exponential,shi2026koopman}. 

Analogous to the autonomous case, we can apply the EDMD approach to approximate matrices \(A,\! B,\! C\) in~\eqref{eqn:Koopman-linear}~when the subspace $\mathcal{S}$ is not a common invariant subspace. In addition to the state and lifted-state data in \eqref{eqn:EDMD-auto-traj}, we now also have the associated input sequence \(U = [u_0, \ldots, u_{n_\D-2}]\). The corresponding least-squares approximations take the form $(A, B)  \in \argmin_{A, B} \ \|Z^+ - AZ - BU \|_F^2, 
\ C  \in \argmin_{C} \ \|X - CZ\|_F^2.$
Unlike the autonomous setting, however, the prediction error of the Koopman linear model with control does not necessarily vanish as we increase the number of lifting functions and the amount of data. The main reason is that controlled nonlinear systems may fail to admit an exact Koopman linear embedding even in infinite dimensions~\cite{iacob2024koopman,haseli2023modeling}. 

\vspace{-2mm}
\subsection{Problem statement}
\label{subsec:prob-state}
A finite-dimensional Koopman linear embedding provides a powerful surrogate model for nonlinear systems. When such an embedding exists, the nonlinear dynamics can be represented exactly by a linear system in appropriately lifted coordinates. This representation enables the direct use of well-established tools from linear systems and control theory. This perspective has been widely adopted to model and control~nonlinear~dynamics, even in settings where the existence of an exact finite-dimensional embedding is not known \cite{korda2018linear, haggerty2023control, mamakoukas2021derivative, shang2024willems, Shang2025KoopmanMPC, shang2025exponential,shi2026koopman}.

A finite-dimensional Koopman linear embedding \cref{eqn:lifted-state} is a special case of the input-state separable form~\cref{eqn:input-sep}:
$$
\begin{bmatrix}
    \Psi(x^+)  \\ 1 
\end{bmatrix} = \begin{bmatrix}
    A & Bu \\ 0 & 1
\end{bmatrix}\begin{bmatrix}
    \Psi(x)  \\ 1 
\end{bmatrix}.
$$
Thus, the existence of a finite-dimensional Koopman linear embedding \cref{eqn:lifted-state} necessarily requires a common Koopman invariant subspace \cite[Theorem 4.3 and Lemma 4.6]{haseli2023modeling}. On the other hand, such embeddings~strictly extend beyond the classical classes of linear time-invariant (LTI) systems (where we can choose $\Psi(x) = x$) and~beyond the autonomous Koopman linear representation \cref{eqn:Koop-linear-auto}. They can represent a broad family of nonlinear systems whose dynamics become linear in suitable lifted coordinates, beyond what is achievable with classical linearization. This structural advantage makes Koopman linear embeddings especially attractive for control design, yet it also raises a fundamental question:
\vspace{-1mm}
\begin{center}
    \textit{Which class of nonlinear systems admits an exact finite-dimensional Koopman linear model?}
\end{center}
\vspace{-1mm}

Despite recent interest in data-driven and model-based Koopman methods \cite{korda2018linear, haggerty2023control, mamakoukas2021derivative, shang2024willems, proctor2018generalizing, Shang2025KoopmanMPC, shang2025exponential,shi2026koopman}, a precise and complete characterization of this class of systems remains open. The main objective of this paper is to provide an exact, system-level characterization of all discrete-time controlled nonlinear systems that admit a Koopman linear embedding. Our results identify the necessary and sufficient structural conditions under which such embeddings exist, thereby clarifying the scope and limitations of Koopman linear modeling.

\vspace{2mm}
\section{Koopman linear embedding and multi-step linear prediction}
\label{sec:main-theorem}

We here present our main technical results characterizing the existence of finite-dimensional Koopman linear embeddings. We begin with a motivating example that provides an informal preview of the underlying structure. We then introduce a notion of multi-step linear predictors, which serves as a key intermediate concept linking nonlinear dynamic systems to Koopman linear embeddings. Finally, we establish that the existence of a Koopman linear embedding is equivalent to the presence of a \textit{control-affine preserved} (\CAP) structure together with a Koopman invariant subspace for its autonomous component. 

\vspace{-1mm}
\subsection{Motivating example and informal characterizations} \label{subsection:example-Koopman}

We here utilize a simple example to demonstrate the conditions under which a finite-dimensional Koopman linear embedding exists. Consider the following two-dimensional nonlinear system, adapted from \cite{brunton2016koopman}: 
\begin{equation}
\label{eqn:slow-manifold}
\begin{aligned}
x_1^+  \!=\! x_2^2 + x_1 + u, \qquad 
x_2^+  \!=\! 0.9 x_2.
\end{aligned}
\end{equation}
This system is also known as a slow-manifold system and has been used as an academic example in the Koopman operator literature \cite{brunton2016koopman,shang2025exponential}. 
Let us consider the Koopman lifting  $\Psi(x) = \col(x_1, x_2, x_2^2)$.  It is straightforward to verify that this lifting evolves linearly along trajectories of \eqref{eqn:slow-manifold}, \emph{i.e.}, 
$$
\Psi(x^+) = \begin{bmatrix}
        1 & 0 & 1 \\ 0 &  0.9 & 0 \\ 0 & 0 & 0.81
    \end{bmatrix}\Psi(x) + \begin{bmatrix}
        1 \\ 0 \\ 0
    \end{bmatrix} u. 
$$
By choosing the lifted state $z:=\Psi(x) \in \mathbb{R}^3$, we obtain a finite-dimensional Koopman linear model  
\begin{equation}
\label{eqn:slow-manifold-linear}
\begin{aligned}
    z^+ \!=\! \begin{bmatrix}
        1 & 0 & 1 \\ 0 &  0.9 & 0 \\ 0 & 0 & 0.81
    \end{bmatrix} z \! +\! \begin{bmatrix}
        1 \\ 0 \\ 0
    \end{bmatrix} u, \quad x = \begin{bmatrix}
        1 & 0 & 0 \\ 0 & 1 & 0
    \end{bmatrix} z. 
\end{aligned}
\end{equation}

Therefore, the slow-manifold system \eqref{eqn:slow-manifold} admits the finite-dimensional Koopman linear embedding \cref{eqn:slow-manifold-linear} per \Cref{def:Koopman-linear}. All nonlinear trajectories with arbitrary input from \eqref{eqn:slow-manifold} are embedded in those of the LTI system \cref{eqn:slow-manifold-linear}. \Cref{fig:Koopman-linear-embedding} illustrates three trajectories from the Koopman linear embedding \cref{eqn:slow-manifold-linear} and the original nonlinear dynamics \eqref{eqn:slow-manifold}. With an exact lifting of the initial condition, $z_0 = \Phi(x_0)$, the behaviors of \eqref{eqn:slow-manifold} are exactly embedded in those of \cref{eqn:slow-manifold-linear}, as shown by the first and second trajectories in \Cref{fig:Koopman-linear-embedding}. In contrast, when the initial lifting is inexact, the Koopman linear system may exhibit trajectories that do not correspond to any trajectory of the original nonlinear system, as shown by the third trajectory.

\begin{figure}[t]
    \centering
    \includegraphics[width=1\linewidth]{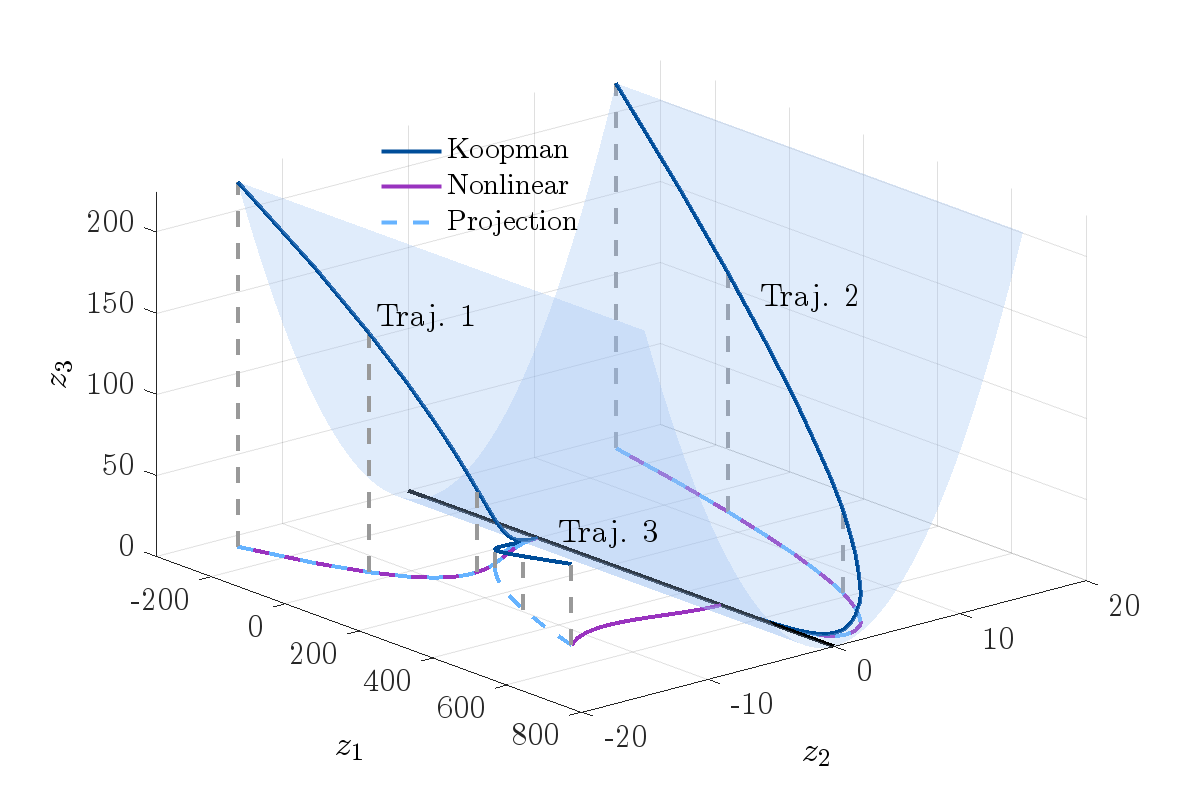}\\
    \vspace{-2mm}
    \caption{Visualization of the Koopman linear embedding for the slow-manifold system \cref{eqn:slow-manifold}. The blue surface represents the invariant manifold $z_3 = z_2^2$. The dark blue, light blue, and purple curves denote trajectories of the Koopman linear embedding \cref{eqn:slow-manifold-linear}, their projections on the original state space $x = Cz \in \mathbb{R}^2$, and the trajectories from the original nonlinear system \cref{eqn:slow-manifold}, respectively. The first two trajectories start from exact lifting of the initial conditions, $z_0 = \Phi(x_0)$, whereas the last trajectory originates from an inexact lifted initial condition. }
    \label{fig:Koopman-linear-embedding}
    \vspace{-4mm}
\end{figure}

While the slow-manifold system \eqref{eqn:slow-manifold} is primarily an academic example, it provides valuable insights into the structural conditions required for the existence of a Koopman linear embedding. In particular, the system has the following properties: 
\begin{enumerate}
    \item The state $x_1$ affected by input $u$ propagates linearly with respect to itself and the input $u$, while the nonlinearity purely comes from the state $x_2$;
    \item The propagation of $x_2$ only depends on itself, which forms an autonomous subsystem;
    \item The autonomous subsystem of $x_2$ admits a Koopman linear representation by choosing $\bar{z}:=\bar{\Psi}(x_2) = \col(x_2, x_2^2)$, whose Koopman invariant subspace includes $x_2$ itself and the nonlinear term $x_2^2$ in the evolution of $x_1$.
\end{enumerate}

These properties are sufficient to construct a Koopman linear embedding for the slow-manifold system. In the rest of this section, we show that this structure is not coincidental, but in fact necessary for the existence of any finite-dimensional Koopman linear embedding. Specifically, we identify three structural properties that characterize this class of systems:  
\begin{enumerate}
    \item \textbf{Input-affine cascaded structure:} The state directly influenced by the input evolves affinely with respect to itself and the control input, while all nonlinearities enter through auxiliary states;
    \item \textbf{Autonomous nonlinear generator:} The auxiliary states evolve autonomously, independent of both the input and the controlled states;
    \item \textbf{Finite-dimensional Koopman closure:} The autonomous subsystem admits a finite-dimensional Koopman invariant subspace that contains its own state and all nonlinear terms appearing in the controlled dynamics.
\end{enumerate}

Our main theoretical result in \Cref{them:accur-Koop} below establishes that these three structural properties are both necessary and sufficient for a controlled nonlinear system to admit a finite-dimensional Koopman linear embedding.

\subsection{$N$-step linear predictors and their existence} 

Before presenting the exact characterization of Koopman linear embeddings, we here introduce the notion of multi-step linear predictors. This concept captures the ability to express the system’s finite-horizon evolution as the combination of a nonlinear function with respect to the initial state and a linear function of the input sequence. The concept will serve as a key intermediate notion connecting nonlinear dynamics to Koopman linear embeddings. 

\begin{definition}[$N$-step linear predictor]
\label{def:multi-step}
    Consider the nonlinear system \eqref{eqn:nonlinear}. Let $x(N), x$ and $u_{0:N-1}$ denote the state at step $N$, the initial state, and the input sequence for $N$ consecutive steps, respectively:
    \begin{enumerate}
        \item The system \cref{eqn:nonlinear} admits an $N$-step linear predictor if there exist a function $\Phi: \mathcal{X} \rightarrow \mathbb{R}^n$ and a matrix $B \in \mathbb{R}^{n \times Nm}$ such that 
        $$
        x(N) = \Phi(x) + B u_{0:N-1}, \quad \forall x\in \mathcal{X}, u_{0:N-1} \in \bar{\mathcal{U}},
        $$
        where $\bar{\mathcal{U}} := \mathcal{U} \times \cdots \times \mathcal{U}$ ($N$-times Cartesian product).
        \item The system \cref{eqn:nonlinear} admits an $\infty$-step linear prediction if it has $N$-step linear predictors for all $N \in \mathbb{N}$. 
\end{enumerate}
\end{definition}

It is immediate that the existence of a finite-dimensional Koopman linear model \cref{eqn:lifted-state} implies the existence of an $\infty$-step linear predictor, since the lifted dynamics allow exact linear propagation over arbitrary horizons. Thus, the existence of an $\infty$-step linear prediction is a necessary condition for a Koopman linear embedding. This observation enables us to decompose the characterization of Koopman linear embeddings into two conceptual subproblems: 
\begin{itemize}
    \item $\mathcal{P}_1:$ Develop sufficient and necessary conditions for \eqref{eqn:nonlinear} to admit an $\infty$-step linear prediction (cf. \Cref{them:multi-steps});
    \item $\mathcal{P}_2:$ Construct additional requirements for the existence of a Koopman linear embedding, building on the conditions for the existence of $\infty$-step linear prediction (cf. \Cref{them:accur-Koop}).
\end{itemize}

\begin{remark}[Loss of affineness]
\label{rem:one-multi}
    When the dynamic system admits a one-step linear predictor, it does not naturally guarantee the $\infty$-step linear prediction. This is because the affine dependency in the input may not be preserved under system recursion. For example, consider the scalar system 
    $x^+ = x^2 + u,$  
    which obviously has a one-step linear predictor, as $x(1) = x^+$. However, its two-step evolution is given by  
    $$x(2) = x^4+2x^2u_0+u_0^2+u_1,$$ 
    that contains a nonlinear term $u_0^2$. Thus, additional structural conditions are required to ensure that linear dependence on the input is preserved over multiple steps. \hfill $\circ$ 
\end{remark}

We note that $N$-step linear predictors are widely used in predictive control for nonlinear systems \cite{korda2018linear,shang2025exponential,masti2020learning}. If a nonlinear system fails to admit such predictors, then its finite-horizon evolution cannot be approximated arbitrarily well, uniformly over states and inputs, by any linear model. 

\begin{proposition}[Exact linear predictors from uniform approximation]
\label{proposition:linear-approximation}
    Consider the nonlinear system \eqref{eqn:nonlinear}. Denote its system propagation at step $N$ as $x(N) := {f}_N(x, u_{0:N-1})$, where $x$ is the initial state and $u_{0:N-1}$ is the control sequence. Suppose that its input set $\mathcal{U}$ is compact and has non-empty interior, and define $\bar{\mathcal{U}}:= \mathcal{U}\times \cdots \times \mathcal{U}$ ($N$-times Cartesian product). If, for every \( \epsilon > 0 \), there exist a function \( \Phi_\epsilon : \mathcal{X} \to \mathbb{R}^n \) and a matrix \( B_\epsilon \in \mathbb{R}^{n \times mN} \) (independent of \( x \)) such that
\[
\sup_{x \in \mathcal{X},\, u_{0:N-1} \in \bar{\mathcal{U}}} \left\| {f}_N(x,u_{0:N-1}) - \Phi_\epsilon(x) - B_\epsilon u_{0:N-1} \right\|_2 \leq \epsilon,
\]
then, the system necessarily admits an $N$-step linear predictor. 
\end{proposition}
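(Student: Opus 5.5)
The plan is to show that the approximating matrices $B_\epsilon$ stay bounded as $\epsilon \to 0$, pass to a convergent subsequence, and recover an exact affine representation in the limit. The main tool is a second-difference test that removes the unknown $\Phi_\epsilon(x)$ entirely.

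\textbf{Eliminating $\Phi_\epsilon$.} Fix $x \in \mathcal{X}$ and write $g_\epsilon(x,v) := f_N(x,v) - \Phi_\epsilon(x) - B_\epsilon v$, so that $\|g_\epsilon\|_2 \le \epsilon$ uniformly on $\mathcal{X}\times\bar{\mathcal{U}}$. Fix also a reference input $v_0 \in \bar{\mathcal{U}}$. Subtracting the bound at $v_0$ from the bound at $v$ gives, for all $v \in \bar{\mathcal{U}}$,
\[
\bigl\| f_N(x,v) - f_N(x,v_0) - B_\epsilon (v - v_0) \bigr\|_2 \le 2\epsilon .
\]

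\textbf{Boundedness of $\{B_\epsilon\}$.} Take $v_0$ and $v_1$ in the interior of $\bar{\mathcal{U}}$; the interior is nonempty because $\mathcal{U}$ has nonempty interior. Then the differences $v - v_0$ for $v$ near $v_0$ contain a small ball $\{\|w\|\le r\}$, and $f_N(x,v)-f_N(x,v_0)$ is bounded on this compact set. Boundedness holds because the displayed estimate with, say, $\epsilon=1$ gives $\|f_N(x,v)-f_N(x,v_0)\| \le \|B_1\|\,\mathrm{diam}(\bar{\mathcal{U}})+2$. The step I expect to need the most care is that this bound must be uniform in $\epsilon$, and it is, since it comes from the $\epsilon=1$ case. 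Consequently
\[
\|B_\epsilon w\| \le \|f_N(x,v_0+w)-f_N(x,v_0)\| + 2\epsilon \le M
\]
for all $\|w\|\le r$ and all $\epsilon\le 1$. Therefore $\|B_\epsilon\| \le M/r$ is uniformly bounded.

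\textbf{Passing to the limit.} By compactness, take $\epsilon_k \to 0$ with $B_{\epsilon_k} \to B$. Letting $k\to\infty$ in the displayed estimate gives
\[
f_N(x,v) = f_N(x,v_0) + B(v - v_0) \quad \text{for every } x \in \mathcal{X},\ v \in \bar{\mathcal{U}}.
\]

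\textbf{Conclusion.} Define $\Phi(x) := f_N(x,v_0) - B v_0$. Then $f_N(x,u_{0:N-1}) = \Phi(x) + B u_{0:N-1}$ for all $x$ and $u_{0:N-1}\in\bar{\mathcal{U}}$, which is an $N$-step linear predictor in the sense of \Cref{def:multi-step}.

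The one subtlety is that the limit $B$ is independent of $x$. This holds because the matrices $B_\epsilon$ are themselves independent of $x$ and the subsequence is chosen once, before any $x$ is fixed. Only the dependence of $\Phi$ on $x$ is allowed, and it is absorbed into the definition of $\Phi$ through the reference value $f_N(x,v_0)$.
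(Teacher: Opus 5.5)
Your argument is correct, but it takes a different route from the paper's.

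\textbf{The paper's route.} It works in two stages. First, for fixed $x$, it notes that $f_N(x,\cdot)$ is a uniform limit on $\bar{\mathcal{U}}$ of affine maps, hence affine because finite-dimensional subspaces are closed (citing Rudin). This gives $f_N(x,\bar u)=\Phi(x)+B(x)\bar u$, with a slope that may depend on $x$. Second, it picks $mN+1$ affinely independent inputs in the interior and uses invertibility of $M=[\col(1,\bar u_1),\ldots,\col(1,\bar u_{mN+1})]$ to show $B_{\epsilon_k}\to B(x)$. Since the $B_{\epsilon_k}$ do not depend on $x$, $B(x)$ is constant.

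\textbf{Your route.} You difference against a reference input $v_0$ to eliminate $\Phi_\epsilon$. You bound $\|B_\epsilon\|$ a priori using a ball around an interior point, then extract a convergent subsequence. A single limit then gives both the affine dependence on the input and the $x$-independence of the slope.

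\textbf{Comparison.} Your version is more elementary and self-contained: it needs no functional-analytic citation and no affinely independent point configuration. It only produces a subsequential limit, but that is all you need. In fact any subsequential limit $B'$ satisfies $f_N(x,v_0+w)-f_N(x,v_0)=B'w$ on a ball, so the limit is unique and the whole family converges.

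Your route also shows that compactness of $\mathcal{U}$ is inessential. Restricting the $\epsilon=1$ estimate to $\|w\|\le r$ gives $\|(B_\epsilon-B_1)w\|_2\le 2+2\epsilon$, hence $\|B_\epsilon-B_1\|\le(2+2\epsilon)/r$. So you only use an interior point of $\bar{\mathcal{U}}$; the diameter bound is the only place you invoke compactness, and it can be dropped. The paper's first step, by contrast, is most naturally carried out in the sup-normed space of bounded functions on $\bar{\mathcal{U}}$, which is where boundedness enters.

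A minor point: the second point $v_1$ you introduce is never used and can be removed.
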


The proof is given in \Cref{appendix:linear-approximation}. \Cref{proposition:linear-approximation} indicates that an arbitrarily accurate linear approximation is possible only if the system admits an exact $N$-step linear predictor.

As illustrated in \Cref{rem:one-multi}, systems of the form $x^+ = \Phi(x) + Bu$ might not give rise to an $\infty$-step linear prediction, as the affine dependence on the input $u$ may be destroyed with system propagation. Thus, to derive conditions for the $\infty$-step linear prediction problem~$\mathcal{P}_1$, the key insight is to ensure that affine dependence on the input $u$ is preserved during the system recursion. We are ready to present our first main technical result. 

\begin{theorem}[Equivalence of $\infty$-step linear prediction and the \CAP{} structure]
\label{them:multi-steps}
Consider the nonlinear system~\cref{eqn:nonlinear}, with $\mathcal{X}$ open and convex, $\mathcal{U}$ open, and $f\!: \!\mathcal{X} \times\mathcal{U} \! \rightarrow \! \mathcal{X}$~surjective. Then, the system admits $N$-step linear predictors with $N \in \{1, \ldots, n+1\}$ if and only if there exists an invertible matrix $T \in \mathbb{R}^{n \times n}$ such that, with the coordinate transformation $\tilde{x}:= \col(\tilde{x}_1, \tilde{x}_2):= T x$, the dynamics take the \emph{control-affine preserved (\CAP) form}:
\begin{equation} \label{eq:CAP-structure}
\begin{bmatrix}
       \tilde{x}_1^{+} \\ \tilde{x}_2^{+}
\end{bmatrix} = \begin{bmatrix}
        g(\tilde{x}_2) + C \tilde{x}_1 \\h(\tilde{x}_2)
\end{bmatrix} + \begin{bmatrix}
       D \\  \mathbb{0}
\end{bmatrix} u , \;\; \forall \tilde{x} \in \tilde{\mathcal{X}}, u\in \mathcal{U}, 
\end{equation}
where $\tilde{\mathcal{X}} := \{T x \in \mathbb{R}^n \mid x \in \mathcal{X}\}$, and $g$, $h$, $C$, and $D$ are functions and matrices of appropriately dimension. 
\end{theorem}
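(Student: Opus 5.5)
The sufficiency direction is a direct computation; necessity carries the weight. For sufficiency, assume the \CAP{} form \cref{eq:CAP-structure} holds in coordinates $\tilde x = Tx$. Then $\tilde x_2(t)=h^{t}(\tilde x_2)$ is independent of the inputs. Unrolling the $\tilde x_1$ recursion gives
\[
\tilde x_1(N) = C^N \tilde x_1 + \sum_{k=0}^{N-1} C^{N-1-k}\bigl(g(h^{k}(\tilde x_2)) + D u_k\bigr),
\]
which is a nonlinear function of the initial state plus a constant matrix acting on $u_{0:N-1}$. Mapping back with $T^{-1}$ yields an $N$-step linear predictor for every $N$, in particular for $N\in\{1,\ldots,n+1\}$.

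For necessity, I would build the transformation $T$ inductively from the predictors at $N=1,2,\ldots$, using a nested chain of subspaces. The $1$-step predictor gives $f(x,u)=\Phi_1(x)+B_1u$. Let $\mathcal{V}_1:=\mathrm{range}(B_1)$. The $2$-step predictor requires $f(\Phi_1(x)+B_1u_0,u_1)=\Phi_1(\Phi_1(x)+B_1u_0)+B_1u_1$ to be affine in $u_0$ with an $x$-independent coefficient. Since $f$ is surjective, $\Phi_1(x)+B_1u_0$ ranges over all of $\mathcal{X}$. Convexity and openness of $\mathcal{X}$ together with openness of $\mathcal{U}$ then give a principle of affine composition: if $y\mapsto\Phi_1(y+B_1v)$ is affine in $v$ with a constant coefficient on an open set, then $\Phi_1(y)=\Phi_1(P y)+L y$, where $L$ is linear and $P$ projects along $\mathcal{V}_1$. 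In other words, $\Phi_1$ depends on the $\mathcal{V}_1$ component only linearly.

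I would then define $\mathcal{V}_{k+1}:=\mathcal{V}_k+L_k\mathcal{V}_k$, meaning the image of $\mathcal{V}_k$ under the linear part of the dynamics. Using the $(k+1)$-step predictor, I would show that $\Phi_1$ is affine along $\mathcal{V}_{k+1}$ as well, with the coefficient independent of $x$. The key mechanism is that $u_0$ enters $x(k)$ through directions spanning $\mathcal{V}_k$, and affineness of $x(k+1)$ in $u_0$ forces $\Phi_1$ to be affine in those directions. Because this is an increasing chain of subspaces of $\mathbb{R}^n$, it stabilizes after at most $n$ steps, so the predictors up to $N=n+1$ suffice.

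Let $\mathcal{V}$ be the stabilized subspace. It contains $\mathrm{range}(B_1)$, it is invariant under the linear part, and $\Phi_1$ is affine along it. I would take $T$ so that $\tilde x_1$ holds coordinates of $\mathcal{V}$ and $\tilde x_2$ coordinates of a complement; the exact form of $T$ comes from writing $\mathcal{V}$ as the range of the first block of columns of $T^{-1}$. In these coordinates $\Phi_1(\tilde x)=C\tilde x_1+\tilde g(\tilde x_2)$ with $C$ constant. Invariance of $\mathcal{V}$ forces the $\tilde x_2$ component of $C\tilde x_1$ to vanish, and $\mathrm{range}(B_1)\subseteq\mathcal{V}$ means $u$ does not enter $\tilde x_2^+$. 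Hence $\tilde x_2^+=h(\tilde x_2)$ and $\tilde x_1^+=g(\tilde x_2)+C\tilde x_1+Du$, which is the \CAP{} form.

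I expect the main obstacle to be the rigorous affine-composition lemma. The difficulty is turning ``affine in $u_0$ on an open set of inputs for every $x$'' into ``$\Phi_1$ is globally affine along a subspace with a constant linear map'' over the whole domain $\mathcal{X}$. I would first prove that a function which is affine along a direction on every segment inside the convex open set has a constant directional derivative along it, using convexity for connectedness of the segments and surjectivity to reach every point. I would then handle the $x$-dependence of the coefficient by comparing the predictor coefficients $B$, which are constant by definition.
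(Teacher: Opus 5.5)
Your proposal is correct and is essentially the paper's argument in coordinate-free form. Your chain $\mathcal{V}_1\subseteq\mathcal{V}_2\subseteq\cdots$ is the span of the staircase blocks $\bar x_{1:k}$ produced by iterating \Cref{proposition:k+1-linear-predictor} (cf.\ \eqref{eqn:nonlinear-concen-decomp}), and your affine-composition step is \Cref{lemma:func-affine-rel}: an exact increment relation obtained by perturbing the input sequence through a pseudo-inverse of the input-to-$x(k)$ map, combined with surjectivity and convexity of the slices.

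Two small remarks. First, there is an indexing slip: the $(k+1)$-step predictor yields affineness along $\mathcal{V}_k$, not $\mathcal{V}_{k+1}$. The count still closes at $N\le n+1$, because the chain has at most $n$ distinct members. Second, your single invariant-subspace endgame treats the fully actuated case $\mathcal{V}=\mathbb{R}^n$ in the same way as every other case. This spares you the auxiliary-state device the paper uses for statement~1 of \Cref{proposition:k+1-linear-predictor}.
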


The control-affine preserved (\CAP) structure in \cref{eq:CAP-structure} formalizes the \textit{input-affine cascaded structure} and the presence of an \textit{autonomous nonlinear generator} discussed in \cref{subsection:example-Koopman}. This \CAP{} structure ensures that the affine dependence on the control input  $u$ is preserved during the system propagation over arbitrary horizons $N \in \mathbb{N}$. The sufficiency of the \CAP{} structure in \cref{eq:CAP-structure} is immediate, since the input $u$ never enters the nonlinear component $\tilde{x}_2$; consequently, the system admits $N$-step linear predictors with any $N \in \mathbb{N}$. In contrast, establishing the necessity of the \CAP{} structure is nontrivial. Our proof leverages structural properties of compositions of affine functions over open and convex sets. The detailed arguments are technically involved, and we provide them in \Cref{sec:accurate-multi-step}.

\subsection{Koopman linear embeddings}

A finite-dimensional Koopman linear embedding necessarily implies the existence of $N$-step linear predictors with any $N \geq 1$, whereas the converse does not generally hold. Consequently, 
nonlinear systems that admit Koopman linear embeddings must, up to a suitable similarity transformation, possess the control-affine preserved (\CAP{}) structure identified in \Cref{them:multi-steps}. Moreover, the autonomous nonlinear component of such systems must admit a Koopman linear representation. The following result formalizes this characterization. 

\begin{theorem}[Equivalence of Koopman linear embedding and the \CAP{} structure with Koopman closure]
\label{them:accur-Koop}
Consider the nonlinear system \cref{eqn:nonlinear}, with $\mathcal{X}$ open and convex, $\mathcal{U}$ open, and $f: \mathcal{X} \times \mathcal{U} \rightarrow \mathcal{X}$ surjective.
 Then, the following two statements are equivalent:
    \begin{enumerate}
        \item \textbf{Koopman linear embedding:} there exist a lifting function $\Psi: \mathcal{X} \to \mathbb{R}^{n_z}$ and  constant matrices $A_\K, B_\K, C_\K$ with compatible dimensions, such that
        \begin{equation}
        \label{eqn:Koop-thm2}
       \Psi(f(x, u)) = A_\K \Psi(x) + B_\K u, \quad x =C_\K \Psi(x). 
        \end{equation}
    \item \textbf{\CAP{} structure with Koopman-closed generator:} Up to a similarity transformation $T \in \mathbb{R}^{n \times n}$, the system dynamics can be written in the control-affine preserved (\CAP) form
    \begin{subequations} \label{eq:CAP+Koopman-invariant}
    \begin{equation}
    \begin{bmatrix}
        x_1^+ \\ x_2^+
    \end{bmatrix} = \begin{bmatrix}
        g(x_2) + Cx_1 \\ h(x_2)
    \end{bmatrix} + \begin{bmatrix}
        D \\ \mathbb{0}
    \end{bmatrix} u,
    \end{equation}
      and there exist a lifting function \(\bar{\Psi}\!:\!\mathrm{proj}_{x_2}(T\mathcal{X})\!\rightarrow\! \mathbb{R}^{\bar{n}_z}\)~and constant matrices \(\bar{A}_{\K}, \bar{C}_{\K}\) of compatible dimensions with 
        \begin{equation} \label{eq:Koopman-closed-autonomous}
        \bar{\Psi}(h(x_2)) \!=\! \bar{A}_\K \bar{\Psi}(x_2), \; \col(x_2, g(x_2))\!=\!\bar{C}_\K \bar{\Psi}(x_2).
        \end{equation}
        \end{subequations}
    \end{enumerate}
\end{theorem}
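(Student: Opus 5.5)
The plan is to prove the two directions separately. Sufficiency (2)$\Rightarrow$(1) is constructive; necessity (1)$\Rightarrow$(2) goes through \Cref{them:multi-steps} plus a reduction of the Koopman model.

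\textbf{Sufficiency.} Assume (2) holds with transformation $T$, and write $Tx=\col(x_1,x_2)$. Take the lifting $\Psi(x):=\col(x_1,\bar\Psi(x_2))$. Partition $\bar C_\K=\col(\bar C_2,\bar C_g)$ so that $x_2=\bar C_2\bar\Psi(x_2)$ and $g(x_2)=\bar C_g\bar\Psi(x_2)$. Then
\[
x_1^+=C x_1+\bar C_g\bar\Psi(x_2)+Du,\qquad \bar\Psi(x_2^+)=\bar A_\K\bar\Psi(x_2),
\]
so
\[
\Psi(f(x,u))=\begin{bmatrix}C&\bar C_g\\ \mathbb{0}&\bar A_\K\end{bmatrix}\Psi(x)+\begin{bmatrix}D\\ \mathbb{0}\end{bmatrix}u .
\]
For the output map, $x=T^{-1}\col(x_1,\bar C_2\bar\Psi(x_2))$ is a linear function of $\Psi(x)$. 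If linear independence of the lifting functions is required, we prune redundant components, which preserves linearity of both relations.

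\textbf{Necessity.} Assume \cref{eqn:Koop-thm2} holds. Iterating it gives
\[
x(N)=C_\K A_\K^N\Psi(x)+\sum_{k} C_\K A_\K^{N-1-k}B_\K u_k ,
\]
so $N$-step linear predictors exist for every $N$. By \Cref{them:multi-steps} there is a $T$ producing the \CAP{} form, which gives the first half of \cref{eq:CAP+Koopman-invariant}.

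It remains to build $\bar\Psi$ on $\mathcal{X}_2:=\mathrm{proj}_{x_2}(T\mathcal{X})$. I would first replace the Koopman model by an observable one, i.e., restrict to the observable part of $(A_\K,C_\K)$ (\Cref{lem:koop-ob}). Then $\Psi(x)$ is recoverable as a linear combination of the outputs $x(0),\dots,x(n_z-1)$ under zero input, i.e., $\Psi(x)=\mathcal{O}^\dagger\col(x,f_0(x),\dots)$, where $f_0(x):=f(x,0)$. If $0\notin\mathcal U$, I would use any fixed $u^*$ and absorb the constant offset.

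By \Cref{lem:koop-seper}, this lifting can be arranged as $\Psi(x)=\col(Tx,\Psi_{\mathrm{nl}}(x))$. The zero-input \CAP{} iterates give $x_2(k)=h^k(x_2)$ and
\[
x_1(k)=C^kx_1+\sum_j C^{k-1-j}g(h^j(x_2)).
\]
Hence the nonlinear part of $\Psi$ equals an affine function of $x_1$ plus a function $\phi(x_2):=\col\bigl(x_2,\,g(h^j(x_2)),\,h^j(x_2)\bigr)_{j}$ of $x_2$ alone. Define $\bar\Psi(x_2)$ as the span-basis of $x_2$ together with these functions $\Psi_{\mathrm{nl}}(x)-Lx_1$; this is well-defined because the $x_1$-dependence is linear.

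To get invariance, compare $\Psi(f(x,0))=A_\K\Psi(x)$ blockwise. The $x_2$-only components of the left side equal $\bar\Psi(h(x_2))$. The right side is linear in $x_1$ plus terms in $\bar\Psi(x_2)$. Since $\mathcal X$ is open, $x_1$ varies freely on an open set for fixed $x_2$, so the $x_1$-coefficients must cancel, leaving $\bar\Psi(h(x_2))=\bar A_\K\bar\Psi(x_2)$. Finally, $g(x_2)=x_1^+-Cx_1-Du$ is obtained by evaluating at fixed $x_1$ and $u$. Since $x^+=C_\K A_\K\Psi(x)+\dots$, this expresses $g$ linearly in $\bar\Psi(x_2)$ plus affine terms, and again the $x_1$-dependence must vanish. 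Including $x_2$ explicitly gives $\bar C_\K$.

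\textbf{Main obstacle.} The hard step is showing rigorously that the lifting components split into "linear in $x_1$" plus "function of $x_2$" parts, so that the reduced subspace is closed under $h$. This needs the separation lemma (\Cref{lem:koop-seper}) and the openness/convexity of $\mathcal X$ to match coefficients. Constant terms, e.g. a constant in $\bar\Psi$, also need care. I would handle them by including the constant function $1$ in $\bar\Psi$, with eigenvalue $1$.
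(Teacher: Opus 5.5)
Your proposal is correct and follows essentially the same route as the paper. Sufficiency is the explicit block construction of $A_\K, B_\K, C_\K$, and necessity goes through \Cref{them:multi-steps}, an observable embedding (\Cref{lem:koop-ob}), inverting the observability matrix on the multi-step outputs to show the lifting is linear in $x_1$ plus a function of $x_2$, and blockwise matching of the one-step propagation using openness of the $x_1$-slices and of $\mathcal{U}$. The only minor difference is that you remove the $x_1$-dependence by explicitly subtracting $Lx_1$ instead of invoking the ``no linear terms'' normalization of \Cref{lem:koop-seper}, which is slightly more rigorous because that normalization is not canonically defined for general functions.
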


The additional condition \cref{eq:Koopman-closed-autonomous} characterizes the \textit{finite-dimensional Koopman closure} property in \Cref{subsection:example-Koopman}, and gives a Koopman invariant subspace under the autonomous system $x_2^+ = h(x_2)$ that contains its own state $x_2$ and all nonlinear terms $g(x_2)$. The condition \cref{eq:CAP+Koopman-invariant} presents all required structural properties for a nonlinear system to admit a Koopman linear embedding. It is clear that statement~2~implies statement~1. Suppose $x_1 \! \in \! \mathbb{R}^{n_1}, x_2 \! \in \! \mathbb{R}^{n_2}$ and $\col(x_1, x_2)\!:= \!Tx$. From \cref{eq:CAP+Koopman-invariant}, one can construct the lifting function as $\Psi(x):= \hat{\Psi}(Tx)$ with $\hat{\Psi}(x_1, x_2) := \col(x_1, \bar{\Psi}(x_2))$ and the associated matrices of the Koopman linear embedding become
\begin{equation} \label{eq:Koopman-linear-model-construction}
\begin{aligned}
A_\K &= \begin{bmatrix}
    C & [\mathbb{0} \ I_{n_1}] \bar{C}_\K  \\ \mathbb{0} & \bar{A}_\K
\end{bmatrix},  \quad B_\K =  \begin{bmatrix}
    D \\ \mathbb{0}
\end{bmatrix}, \\ 
C_\K &= T^{-1}
\begin{bmatrix}
    I_{n_1} & \mathbb{0} \\ \mathbb{0} & [I_{n_2} \ \mathbb{0}] \bar{C}_\K
\end{bmatrix}.
\end{aligned}
\end{equation}

The proof that statement 1 implies statement 2 is technically involved. One key idea is to utilize the equivalence of state evolutions induced by the \CAP{} structure and the Koopman linear embedding. We show functions $g(\cdot)$ and $h(\cdot)$ are required to satisfy the property of the Koopman-closed generator in \cref{eq:Koopman-closed-autonomous} by enforcing that both formulations produce identical state trajectories. We provide proof details in \Cref{sec:accurate-Koopman}. 

\begin{remark}[Connection to LTI systems and Koopman operator for autonomous systems]
The structural conditions in \Cref{them:accur-Koop} bridge two well-understood classes of systems. When the autonomous nonlinear generator is absent, the framework reduces to classical LTI systems. When the control input is removed, the conditions recover the standard setting of finite-dimensional Koopman linear representations for autonomous systems as in \cref{eqn:Koop-linear-auto}. In this sense, the proposed structure provides a unifying perspective that connects linear control systems and Koopman operator theory for autonomous dynamics. \hfill $\circ$ 
\end{remark}

\begin{remark}[Affine systems]
    It is known that any affine system admits an exact Koopman linear embedding \cite{shang2024willems}. We here show that the class of affine systems is also a special case of \Cref{eq:CAP+Koopman-invariant}, where the autonomous nonlinear generator vanishes. Consider 
    an affine system of the form
    \begin{equation}
        \label{eqn:affine-sys}
        x^+ = Ax + Bu + r, 
    \end{equation}
    where $x \in \mathbb{R}^n$ is the state, $u \in \mathbb{R}^m$ is the input, and $r\in\mathbb{R}^n$ is a constant vector. In this case, both the function $g$ and the lifting function $\bar{\Psi}$ degrades to constants with $g(x_2) = r$ and $\bar{\Psi}(x_2) = 1$. Thus, we can construct the lifting function for~\eqref{eqn:affine-sys} as $\Psi(x) = \col(x, 1)$ and the associated matrices for the Koopman linear embedding are 
    \begin{align}
    A_\K = \begin{bmatrix}
        A & r \\ \mathbb{0} & 1
    \end{bmatrix}, \ B_\K = \begin{bmatrix}
        B \\ \mathbb{0}
    \end{bmatrix}, \ C_\K = \begin{bmatrix}
        I_n & \mathbb{0}
    \end{bmatrix}. \tag*{$\circ$}
    \end{align}
\end{remark}

We conclude this section with an example illustrating \Cref{them:accur-Koop}. In this example, the existence of a finite-dimensional Koopman linear embedding may not seem immediately apparent. Nevertheless, this embedding can be identified using a constructive procedure for $N$-step linear predictors. The details of this procedure will be presented in \Cref{sec:alg-cerification}.

\begin{example}
\label{example:concise}
Consider the nonlinear system 
\[
    \begin{bmatrix}
        x_1^+ \\ x_2^+ \\ x_3^+
    \end{bmatrix}\! =\! 
    \begin{bmatrix}
        (x_2\!+\!x_3)^2 \!+\! (x_1\!+\!x_2)\!+\!u\\
        (x_2\!+\!x_3)^2\!\cdot\!(x_2\!+\!x_3-1)\!+\!x_1 \!-\!2u\\
        (x_2\!+\!x_3)^2\!\cdot\!(1\!-\!x_2\!-\!x_3)\!-\!x_1\!+\!0.5x_2\!+\!0.5x_3\! +\! 2u
    \end{bmatrix},
\]
where $x := \col(x_1, x_2, x_3) \in \mathbb{R}^3$ is the state of the system. It admits an exact Koopman linear embedding with 
\[
z \!=\! \Psi(x) \!:=\! \col(x_1, -\!2x_1\!-\!x_2, x_2\!+\!x_3, (x_2\!+\!x_3)^2, (x_2\!+\!x_3)^3)
\]
and we can verify that 
\begin{equation} \label{eq:Koopman-model-example}
\begin{aligned}
z^+ =& \begin{bmatrix}
    -1 & -1 & 0 & 1 & 0 \\ 1 & 2 & 0 & -1 & -1\\ 0 & 0 & 0.5 & 0 & 0\\ 0 & 0 & 0 & 0.25 & 0 \\ 0 & 0 & 0 & 0 & 0.125
\end{bmatrix} z + \begin{bmatrix}
    1 \\ 0 \\ 0 \\ 0 \\ 0
\end{bmatrix} u, \\
x =& \begin{bmatrix}
    1 & 0 & 0 & 0 & 0 \\ 
    -2 & -1 & 0 & 0 & 0\\
    2 & 1 & 1 & 0 & 0
\end{bmatrix} z.
\end{aligned}
\end{equation}
This Koopman linear embedding \cref{eq:Koopman-model-example} may not seem immediate. Indeed, we can find a similarity transformation $T$ (see \Cref{sec:alg-cerification}) to transform the system into the \CAP{} structure. In the new coordinate $\tilde{x} := \col(\tilde{x}_1, \tilde{x}_2, \tilde{x}_3) = Tx$, we have  
\begin{equation} \label{eq:CAP-example}
\begin{bmatrix}
    \tilde{x}_1^+ \\ \tilde{x}_2^+ \\ \tilde{x}_3^+
\end{bmatrix} = \begin{bmatrix}
    \tilde{x}_3^{2} - \tilde{x}_1- \tilde{x}_2 + u \\ -\tilde{x}_3^{3}-\tilde{x}_3^{2}+\tilde{x}_1+2\tilde{x}_2 \\ 0.5 \tilde{x}_3
\end{bmatrix}.
\end{equation}
Then, we can design the associated lifting function for $\tilde{x}_3^{+} = h(\tilde{x}_3) := 0.5\tilde{x}_3$ as $\bar{\Psi} := \col(\tilde{x}_3, \tilde{x}_3^{2}, \tilde{x}_3^{3})$. The function $\bar{\Psi}$ includes all nonlinearities of the system (\emph{i.e.}., $\tilde{x}_3^{2}, \tilde{x}_3^{3}$) and evolves linearly as $h$ is a linear function. From \cref{eq:CAP-example}, it is now straightforward to obtain the Koopman linear model \cref{eq:Koopman-model-example}. \hfill $\circ$
\end{example}

\section{Existence of $N$-step linear predictors}
\label{sec:accurate-multi-step}

In this section, we discuss the existence of $N$-step linear predictors and provide a full proof of \Cref{them:multi-steps}. According to \Cref{def:multi-step}, a \textit{one-step linear predictor} already requires the system dynamics \cref{eqn:nonlinear} to be in the form of 
\begin{equation}
\label{eq:one-step-linear}
    f(x,u) = \Phi(x) + Bu, \quad \forall x \in \mathcal{X}, u \in \mathcal{U} ,
\end{equation}
where $\Phi$ is a nonlinear mapping and $B \in \mathbb{R}^{n \times m}$. The existence of $N$-step linear predictors with $N \in \{1, 2,\ldots, \bar{N}\}$ and $\bar{N} \ge 2$ must require the system dynamics \cref{eqn:nonlinear} to have structure beyond \cref{eq:one-step-linear}. We shall show that, with  $\bar{N} = n+1$, the system dynamics \cref{eqn:nonlinear} need to be in the \CAP{} structure, defined in \Cref{eq:CAP-structure}.

Throughout this section, we assume the system \eqref{eq:one-step-linear} satisfies the following assumption.
\begin{assumption}
\label{assum:one-step-predictor}
    The mapping $(x,u) \mapsto \Phi(x)+Bu$ from $\mathcal{X} \times \mathcal{U}$ to $\mathcal{X}$ is surjective, $\mathcal{X}$ is open and convex, and $\mathcal{U}$ is open.
\end{assumption}

\subsection{Affine functions on general sets and their composition}
One key element in our technical discussion lies in the analysis of the composition of affine functions.
We begin by recalling the notion of affine functions~\cite{boyd2004convex}. 
\begin{definition}[Affine function on a set]
A function $\Phi : \mathcal{X} \to \mathbb{R}^p$ is \emph{affine on $\mathcal{X} \subseteq \mathbb{R}^n$} if there exist a matrix $C \in \mathbb{R}^{p \times n}$ and a vector $v \in \mathbb{R}^p$ such that
$
    \Phi(x) = Cx + v, \, \forall x \in \mathcal{X}.
$ 
\end{definition}

This definition does not require $\mathcal{X}$ to be convex or connected. An affine function is simply the restriction of a globally affine map to the set $\mathcal{X}$. We next state a basic result that follows from \cite[Exercises 9.9]{rudin1976principles} that will be used repeatedly later. We also present a simple proof for completeness. 

\begin{lemma}[Constant gradient implies affine function]
\label{lem:constant-gradient-affine}
Let $\mathcal{X} \subseteq \mathbb{R}^n$ be open and connected, and let $\Phi: \mathcal{X} \to \mathbb{R}^p$ be a differentiable function. If there exists a constant matrix $C \in \mathbb{R}^{p \times n}$ such that
$
    \nabla \Phi(x) = C, \, \forall x \in \mathcal{X},
$
then $\Phi$ is affine on $\mathcal{X}$; that is, there exists $v \in \mathbb{R}^p$ such that
\[
    \Phi(x) = Cx + v, \qquad \forall x \in \mathcal{X}.
\]
\end{lemma}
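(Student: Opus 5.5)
The plan is to reduce to the zero-gradient case and then use connectedness. Define the auxiliary function $\Gamma: \mathcal{X} \to \mathbb{R}^p$ by $\Gamma(x) := \Phi(x) - Cx$. Since $x \mapsto Cx$ is differentiable with gradient $C$ everywhere, $\Gamma$ is differentiable on $\mathcal{X}$ with $\nabla \Gamma(x) = \mathbb{0}$ for all $x \in \mathcal{X}$. It then suffices to show that $\Gamma$ is constant on $\mathcal{X}$, say $\Gamma \equiv v$, because this gives $\Phi(x) = Cx + v$ on $\mathcal{X}$.

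To show $\Gamma$ is constant, I would first prove that it is locally constant. Fix $x_0 \in \mathcal{X}$. Since $\mathcal{X}$ is open, there is a ball $\mathcal{B}(x_0, r) \subseteq \mathcal{X}$. For any $y$ in this ball the segment $[x_0, y]$ lies in the ball, which is convex. Working componentwise, take the $i$-th entry $\Gamma_i$ and the scalar function $\phi(s) := \Gamma_i(x_0 + s(y - x_0))$ on $[0,1]$. By the chain rule $\phi$ is differentiable with $\phi'(s) = \nabla \Gamma_i(x_0 + s(y-x_0))(y - x_0) = 0$. The one-dimensional mean value theorem then gives $\phi(1) = \phi(0)$. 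Hence $\Gamma(y) = \Gamma(x_0)$ on the whole ball.

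The remaining step is to pass from local constancy to global constancy, and this is the only place connectedness is used. Let $v := \Gamma(x_0)$ for a fixed $x_0$, and set $\mathcal{A} := \{x \in \mathcal{X} \mid \Gamma(x) = v\}$. The set $\mathcal{A}$ is nonempty, and it is open by the local argument. Its complement $\mathcal{X} \setminus \mathcal{A}$ is also open: if $\Gamma(x) \neq v$, then $\Gamma$ equals $\Gamma(x) \neq v$ on a ball around $x$. Alternatively, $\mathcal{A}$ is relatively closed because $\Gamma$ is continuous. Connectedness of $\mathcal{X}$ then forces $\mathcal{A} = \mathcal{X}$.

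This is elementary, and I expect no real obstacle. The only subtle point is to apply the mean value theorem only along segments contained in $\mathcal{X}$. Since $\mathcal{X}$ is not assumed convex, this is why the argument goes through balls and the open/closed connectedness step rather than a single segment from $x_0$ to $x$. In the settings of this paper $\mathcal{X}$ is open and convex, so one could shortcut directly with segments, but the connected version costs nothing extra.
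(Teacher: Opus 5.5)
Your proof is correct. It shares the paper's first step, subtracting $Cx$ to reduce to a map with zero gradient, but it uses connectedness differently.

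The paper's route is this: it passes from ``open and connected'' to path-connected, then argues that $F(\gamma(t))$ has zero derivative along a continuously differentiable path $\gamma$ joining $x_0$ to $x$. This is shorter, but it tacitly uses the fact that any two points of an open connected set can be joined by a $C^1$ path. At minimum a polygonal path is needed, treated segment by segment since it is not $C^1$ at its corners. The standard proof of that fact is itself an open/closed argument of the kind you give.

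Your route avoids this extra ingredient. You apply the one-dimensional mean value theorem only on straight segments inside balls, where containment in $\mathcal{X}$ is automatic. You then invoke the definition of connectedness directly through the set $\mathcal{A}$. So your argument is fully self-contained at the cost of one extra step. The paper's is more compact but leans on a path fact it does not justify.

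As you note, the distinction is immaterial where the lemma is actually used. In every application in the paper (the state space or its slices), the domain is open and convex, and both arguments collapse to a single-segment mean value argument.
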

\begin{proof}
Define $F(x) := \Phi(x) - Cx$. Then $\nabla F(x) = 0$ for all $x \in \mathcal{X}$.  
Since $\mathcal{X}$ is open and connected, it is path-connected. For any $x_0, x \in \mathcal{X}$, let $\gamma : [0,1] \to \mathcal{X}$ be a continuously differentiable path with $\gamma(0) = x_0$ and $\gamma(1) = x$. By the chain rule, we have 
\[
    \frac{d}{dt} F(\gamma(t)) = \nabla F(\gamma(t)) \dot{\gamma}(t) = 0,
\]
which implies $F(\gamma(t))$ is constant along the path. Hence~$F(x)$ $= F(x_0), \forall x  \in  \mathcal{X}$, and the claim follows by $v :=F(x_0)$.
\end{proof}

The openness of $\mathcal{X}$ ensures that gradients are well defined at all points, while connectedness guarantees that the affine offset is constant over $\mathcal{X}$. If $\mathcal{X}$ is disconnected, we may not have a common offset $v$ for the function $\Phi$. Affine functions enjoy simple and well-known closure properties under composition. Let $\Phi_1(x) = A_1 x + b_1$ and $\Phi_2(x) = A_2 x + b_2$ be affine functions with compatible dimensions. Then, the composition $\Phi_2 \circ \Phi_1$ is also affine, given~by
$
    (\Phi_2 \circ \Phi_1)(x) = A_2 A_1 x + (A_2 b_1 + b_2). 
$ 
This property is simple yet fundamental in linear systems theory. In the context of controlled nonlinear systems, however, affine dependence on the input $u$ may be destroyed under composition due to nonlinearities in the state $x$ (see \Cref{rem:one-multi}), unless additional constraints are imposed.

We now present a result that formalizes when the affine structure is necessarily preserved under recursive composition. This result will play a crucial role in establishing the necessity of the \CAP{} structure for $N$-step linear prediction. 

\begin{lemma}[Two-layer affine recursion with full-row rank coefficient]
\label{lem:affine-from-composition}
Given sets  $\mathcal{X}\subseteq \mathbb{R}^n$ and $\mathcal{U} \subseteq \mathbb{R}^m$, let $\Phi:\mathcal{X}\to \mathbb{R}^n$ be an arbitrary function and $B\in \mathbb{R}^{n\times m}$ have full row rank. Suppose that (\textbf{A1}) \Cref{assum:one-step-predictor} holds and (\textbf{A2}) there exist a function $\tilde{\Phi}:\mathcal{X}\to \mathbb{R}^n$ and a matrix $\tilde{B}\in\mathbb{R}^{n\times m}$ such that
    \begin{equation}
    \label{eq:affine-recursion-identity}
        \Phi(\Phi(x)+Bu) = \tilde{\Phi}(x) + \tilde{B}u, \quad \forall x\in\mathcal{X}, u\in\mathcal{U}.
    \end{equation}
Then $\Phi$ must be affine on $\mathcal{X}$; that is, there exist $C\in\mathbb{R}^{n\times n}$ and $v\in\mathbb{R}^n$ such that
$\Phi(x)=Cx+v$, for all $x\in\mathcal{X}$.
\end{lemma}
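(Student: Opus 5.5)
The plan is to exploit the fact that, because $B$ has full row rank, the map $u\mapsto \Phi(x)+Bu$ sweeps out an open neighbourhood of $\Phi(x)$. On that neighbourhood $\Phi$ must then coincide with an affine function of $u$. No differentiability of $\Phi$ is assumed, so we should avoid \Cref{lem:constant-gradient-affine} at first and work with finite differences instead.

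Fix $x\in\mathcal{X}$ and let $y:=\Phi(x)$. For $u,u'\in\mathcal{U}$, identity \cref{eq:affine-recursion-identity} gives
\[
\Phi(y+Bu')-\Phi(y+Bu)=\tilde{B}(u'-u).
\]
Since $B$ has full row rank, pick a right inverse $B^\dagger$ with $BB^\dagger=I_n$ and decompose $\mathbb{R}^m=\mathrm{range}(B^\dagger)\oplus\ker B$.

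First, $\tilde{B}$ vanishes on $\ker B$. Indeed, for $u\in\mathcal{U}$ and $k\in\ker B$ small enough, $u+k\in\mathcal{U}$ because $\mathcal{U}$ is open. Then the left-hand side above is zero, hence $\tilde{B}k=0$. By scaling this holds on all of $\ker B$, so $\tilde{B}=\tilde{B}B^\dagger B$. Setting $C:=\tilde{B}B^\dagger$, we get
\[
\Phi(y+Bu)=\Phi(y+Bu_0)+C\,B(u-u_0)
\]
for all $u$ in $\mathcal{U}$, at least when $\mathcal{U}$ is connected, or more simply by comparing each $u$ with a fixed $u_0$ directly, which the identity allows. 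Thus $\Phi(w)=Cw+v_x$ for all $w$ in the set $W_x:=\{\Phi(x)+Bu:u\in\mathcal{U}\}$. This set is open, being the image of an open set under a surjective linear map, which is an open map. Note that $C$ depends only on $\tilde{B}$ and $B^\dagger$, so it is independent of $x$.

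Next, glue these local affine pieces. By the surjectivity in \Cref{assum:one-step-predictor}, $\bigcup_{x}W_x=\mathcal{X}$. Hence every point of $\mathcal{X}$ has an open neighbourhood on which $\Phi(w)-Cw$ is constant, where the constant $v_x$ may a priori depend on the piece. The function $F(w):=\Phi(w)-Cw$ is therefore locally constant on $\mathcal{X}$. Since $\mathcal{X}$ is convex, it is connected, so $F$ is constant, say $F\equiv v$. (Equivalently, $F$ is differentiable with zero gradient, and \Cref{lem:constant-gradient-affine} applies.) This gives $\Phi(x)=Cx+v$ on $\mathcal{X}$. One should also check that $W_x\subseteq\mathcal{X}$ so that $\Phi$ is evaluated in its domain; this holds because $f$ maps into $\mathcal{X}$.

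The main obstacle is the second paragraph: extracting a single matrix $C$ with $\tilde{B}=CB$ without assuming any regularity of $\Phi$, and checking that comparisons within one $W_x$ need no connectedness of $\mathcal{U}$. Comparing each $u$ against a fixed $u_0$ handles the latter. The kernel argument relies on the openness of $\mathcal{U}$, and for $\mathcal{U}$ nonempty it suffices to perturb around a single point.
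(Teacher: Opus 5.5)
Your proof is correct, and its core mechanism matches the paper's. You realize displacements of the argument of $\Phi$ as input displacements $B(u-u_0)$ via a right inverse, read off the increment from \eqref{eq:affine-recursion-identity}, and conclude from connectedness of $\mathcal{X}$. What differs is the intermediate step and the finishing lemma.

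The paper only perturbs a preimage input locally, $\tilde u \mapsto \tilde u + B^\dagger\varepsilon$. This gives the exact identity $\Phi(y+\varepsilon)-\Phi(y)=\tilde B B^\dagger\varepsilon$ on a small ball around each $y$. The paper then declares $\Phi$ differentiable with constant Jacobian $\tilde B B^\dagger$ and invokes \Cref{lem:constant-gradient-affine}, which argues via the chain rule along $C^1$ paths.

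You instead first show that $\tilde B$ annihilates $\ker B$, i.e.\ $\tilde B = CB$ with $C=\tilde B B^\dagger$. This lets you compare arbitrary pairs $u,u_0\in\mathcal{U}$, not only nearby ones, so $\Phi-C(\cdot)$ is constant on the whole open set $W_x$. You then finish with the topological fact that a locally constant function on a connected set is constant.

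Your route is calculus-free and yields the extra global identity $\tilde B = CB$, which is what the chain rule would predict. The kernel step is not actually needed for the conclusion, though. The paper's small perturbations along $\mathrm{range}(B^\dagger)$ already make $\Phi - C(\cdot)$ locally constant, so your gluing argument goes through without it. The paper's differentiability formulation, for its part, is the template it reuses for slices in \Cref{lem:part-control-affine} and \Cref{lemma:func-affine-rel}.
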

\begin{proof}
    We prove that $\Phi$ is differentiable on $\mathcal{X}$ and its gradient is given by 
\begin{equation} \label{eq:constant-gradient}
    \nabla \Phi(x) =  \tilde{B}B^\dagger,\qquad \forall x\in\mathcal{X}, 
\end{equation}
where $B^\dagger\in\mathbb{R}^{m\times n}$ denotes the right inverse of $B$. Since $\mathcal{X}$ is open and connected, the claim then follows from \Cref{lem:constant-gradient-affine}.

To prove \cref{eq:constant-gradient}, let $y\in \mathcal{X}$ be arbitrary. By hypothesis \textbf{(A1)}, there exist $\tilde{x}\in\mathcal{X}$ and $\tilde{u}\in\mathcal{U}$ such that
\begin{equation}
\label{eq:y-repr}
    y=\Phi(\tilde{x})+B\tilde{u}.
\end{equation}
Because $B$ has full row rank, there exists a right inverse $B^\dagger\in\mathbb{R}^{m\times n}$ such that $BB^\dagger = I$. Fix any sufficiently small $\varepsilon\in\mathbb{R}^n$. We have $y+\varepsilon\in\mathcal{X}$ for all $\|\varepsilon\|$ small enough  since $\mathcal{X}$ is open. Moreover, since $\mathcal{U}$ is open, there exists $\delta>0$ such that $\tilde{u}+w\in\mathcal{U}$ whenever $\|w\|<\delta$. Thus, for $\varepsilon$ small enough so that $\|B^\dagger \varepsilon\|<\delta$, the perturbed input satisfies 
$
    \tilde{u}_\varepsilon := \tilde{u}+B^\dagger\varepsilon \in \mathcal{U}. 
$ 
By construction \cref{eq:y-repr} and $BB^\dagger = I$, we further have 
$$
 \Phi(\tilde{x})+B\tilde{u}_\varepsilon = \Phi(\tilde{x})+B (\tilde{u}+B^\dagger\varepsilon) =  y+\varepsilon. 
$$

We now apply the hypothesis \textbf{(A2)}.
Since $y=\Phi(\tilde{x})+B\tilde{u}\in\mathcal{X}$ and $y+\varepsilon=\Phi(\tilde{x})+B\tilde{u}_\varepsilon\in\mathcal{X}$, the compositions in \eqref{eq:affine-recursion-identity} are well defined for both inputs. Therefore,
\[
\begin{aligned}
\Phi(y+\varepsilon)
&=\Phi(\Phi(\tilde{x})+B\tilde{u}_\varepsilon)
=\tilde{\Phi}(\tilde{x})+\tilde{B}\tilde{u}_\varepsilon, \\
\Phi(y)
&=\Phi(\Phi(\tilde{x})+B\tilde{u})
=\tilde{\Phi}(\tilde{x})+\tilde{B}\tilde{u}.
\end{aligned}
\]
Subtracting these identities yields the \emph{exact} increment relation
\begin{equation}
\label{eq:exact-increment}
    \Phi(y+\varepsilon)-\Phi(y)
    = \tilde{B}(\tilde{u}_\varepsilon-\tilde{u})
    = \tilde{B}B^\dagger\,\varepsilon
\end{equation}
for all sufficiently small $\varepsilon$. Then \eqref{eq:exact-increment} implies
\[
\lim_{\varepsilon\to 0}\frac{\|\Phi(y+\varepsilon)-\Phi(y)-\tilde{B}B^\dagger\varepsilon\|_2}{\|\varepsilon\|_2}=0.
\]
Since $y\in\mathcal{X}$ is arbitrary, $\Phi$ is differentiable on $\mathcal{X}$ with gradient given in \Cref{eq:constant-gradient}. This completes the proof. 
\end{proof}

The full row-rank condition on $B$ cannot be relaxed in \Cref{lem:affine-from-composition}. This ensures that perturbations of the input $u$ can generate variations in all directions of the state space $\mathcal{X}$, which is crucial for establishing the affine structure of $\Phi$.~We illustrate the necessity of this condition with a simple example. 

\begin{example}[Necessity of  the full row-rank condition] \label{example:rank-deficient}
    Consider $\mathcal{X} = \mathbb{R}\times \mathbb{R}_{++}, \mathcal{U} = \mathbb{R}$ and define
    $$
    \Phi(x) = \begin{bmatrix}
        x_1 \\ x_2^2
    \end{bmatrix}, B = \begin{bmatrix}
        1 \\ 0
    \end{bmatrix} .
    $$
    It is clear that this instance satisfies the assumptions \textbf{A1} and \textbf{A2} in \Cref{lem:affine-from-composition}. Indeed, we have
    $$
    \Phi(\Phi(x)+Bu) = \begin{bmatrix}
        x_1 \\ x_2^4
    \end{bmatrix} + \begin{bmatrix}
        1 \\ 0
    \end{bmatrix}u. 
    $$
    However, the matrix $B$ does not have full row rank, and indeed the function $\Phi$ is clearly not affine on $\mathcal{X}$. \hfill $\circ$
\end{example}

If $B$ does not have full row rank, the map $(x,u) \to \Phi(x) + B u$ can always be rewritten, after a suitable coordinate change, into the form 
\begin{equation} \label{eq:nonlinear-part-control}
    \Phi(x) + B u = \begin{bmatrix}
    \Phi_1(x_1, x_2) \\ \Phi_2(x_1, x_2)
\end{bmatrix} + \begin{bmatrix}
    B_1 \\ \mathbb{0}
\end{bmatrix}u,
\end{equation}
where $x = \col(x_1,x_2)$, $x_1 \in \mathbb{R}^{n_1},x_2 \in \mathbb{R}^{n-n_1}$, and the matrix $B_1 \in \mathbb{R}^{n_1 \times m}$ has full row rank. In this case, under mild conditions, if the two-layer composition $\Phi(\Phi(x)+Bu))$ is affine in the input $u$, then both functions $\Phi_1(x_1, x_2)$ and $\Phi_2(x_1, x_2)$ must be affine with respect to the input-driven state $x_1$. The following lemma formalizes this observation.

\begin{lemma}[Two-layer affine recursion with rank-deficient coefficient]
\label{lem:part-control-affine}
    With the same setting as in \cref{lem:affine-from-composition} (except~that the matrix $B$ is rank-deficient),  assume in addition that the mapping $(x,u) \to \Phi(x) + Bu$ is in the form of \eqref{eq:nonlinear-part-control} where $B_1 \in \mathbb{R}^{n_1 \times m}$ has full row rank. Then there exist {constant}~matrices $C_1\in\mathbb{R}^{n_1\times n_1}$ and $C_2\in\mathbb{R}^{(n-n_1)\times n_1}$, and functions
$g_1:\mathrm{proj}_{x_2}(\mathcal{X})\!\to\!\mathbb{R}^{n_1}$ and $g_2:\mathrm{proj}_{x_2}(\mathcal{X})\!\to\!\mathbb{R}^{n - n_1}$ such~that  
\[
\begin{aligned}
\Phi_1(x_1,x_2)&=C_1x_1+g_1(x_2), 
\\
\Phi_2(x_1,x_2)&=C_2x_1+g_2(x_2), \;\; \forall x=\col(x_1,x_2)\in\mathcal{X}.
\qquad 
\end{aligned}
\]
\end{lemma}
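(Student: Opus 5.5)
The plan is to mimic the proof of \Cref{lem:affine-from-composition}, but to perturb only along the $x_1$-directions, which are exactly the directions that the full-row-rank block $B_1$ can reach. Fix an arbitrary $y=\col(y_1,y_2)\in\mathcal{X}$. By \textbf{(A1)} there exist $\tilde{x}\in\mathcal{X}$ and $\tilde{u}\in\mathcal{U}$ with
\[
y_1=\Phi_1(\tilde{x})+B_1\tilde{u}, \qquad y_2=\Phi_2(\tilde{x}).
\]
Let $B_1^\dagger$ be a right inverse of $B_1$. For $\varepsilon\in\mathbb{R}^{n_1}$ small, set $\tilde{u}_\varepsilon:=\tilde{u}+B_1^\dagger\varepsilon$. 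Since $\mathcal{U}$ is open, $\tilde{u}_\varepsilon\in\mathcal{U}$, and then
\[
\Phi(\tilde{x})+B\tilde{u}_\varepsilon=\col(y_1+\varepsilon,\,y_2),
\]
which lies in $\mathcal{X}$ by openness. Applying the identity \eqref{eq:affine-recursion-identity} at $\tilde{u}_\varepsilon$ and at $\tilde{u}$ and subtracting gives the exact increment relation
\[
\Phi(y_1+\varepsilon,y_2)-\Phi(y_1,y_2)=\tilde{B}B_1^\dagger\varepsilon
\]
for all sufficiently small $\varepsilon$. Hence $\Phi$ is differentiable in $x_1$ at every point of $\mathcal{X}$, with constant partial Jacobian $\nabla_{x_1}\Phi=\tilde{B}B_1^\dagger=:\col(C_1,C_2)$. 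This splits into $C_1\in\mathbb{R}^{n_1\times n_1}$ and $C_2\in\mathbb{R}^{(n-n_1)\times n_1}$, and neither block depends on $y$.

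The next step is to integrate this partial derivative to obtain the decomposition. Define $F(x_1,x_2):=\Phi(x_1,x_2)-\col(C_1,C_2)x_1$, so that $\nabla_{x_1}F\equiv 0$ on $\mathcal{X}$. For a fixed $x_2\in\mathrm{proj}_{x_2}(\mathcal{X})$, consider the fiber $\mathcal{X}_{x_2}:=\{x_1:\col(x_1,x_2)\in\mathcal{X}\}$. This fiber is the intersection of the convex set $\mathcal{X}$ with an affine subspace, so it is convex and hence connected. It is also open in $\mathbb{R}^{n_1}$ because $\mathcal{X}$ is open. Applying \Cref{lem:constant-gradient-affine} on $\mathcal{X}_{x_2}$ (or simply the segment argument from its proof) shows that $F(\cdot,x_2)$ is constant on the fiber; call this constant $\col(g_1(x_2),g_2(x_2))$. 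This defines $g_1$ and $g_2$ on $\mathrm{proj}_{x_2}(\mathcal{X})$ and yields the claimed forms of $\Phi_1$ and $\Phi_2$.

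The main point requiring care is this fiberwise step, and it is exactly where the convexity of $\mathcal{X}$ is used and not merely its connectedness. For a connected but non-convex $\mathcal{X}$, a fiber could be disconnected, and the offset could then depend on the component as well as on $x_2$. One also needs differentiability only in the $x_1$ directions, and the exact increment identity supplies this directly. No regularity of $\Phi$ in $x_2$ is needed, because $g_1$ and $g_2$ are allowed to be arbitrary functions.
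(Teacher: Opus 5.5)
Your proposal is correct and follows essentially the same route as the paper. Both arguments use surjectivity to write $y=\Phi(\tilde{x})+B\tilde{u}$ and perturb the input by $B_1^\dagger\varepsilon$, which gives an exact increment along the $x_1$-directions with $x_2$-independent slope $\tilde{B}B_1^\dagger$; both then apply \Cref{lem:constant-gradient-affine} on each open convex slice $\mathcal{X}_{x_2}$ to extract the offset $\col(g_1(x_2),g_2(x_2))$.
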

\begin{proof}
The proof strategy is similar to that of \Cref{lem:affine-from-composition}. Fix any $x_2\in \mathrm{proj}_{x_2}(\mathcal{X})$ and define the slice
\[
\mathcal{X}_{x_2}:=\{x_1\in\mathbb R^{n_1}\mid \col(x_1,x_2)\in\mathcal{X}\}.
\]
Since $\mathcal{X} \subseteq \mathbb{R}^n$ is open and convex, each slice $\mathcal X_{x_2}$ is open and convex (hence connected) in $\mathbb{R}^{n_1}$.
Define the function $F_{x_2}: \mathcal{X}_{x_2} \to \mathbb{R}^n$ as 
$
F_{x_2}(x_1) := \Phi(\col(x_1,x_2)). 
$
Using a similar construction in \Cref{lem:affine-from-composition}, we can show that 
\[
\lim_{\varepsilon\to 0}
\frac{\|F_{x_2}(x_1+\varepsilon)-F_{x_2}(x_1)-(\tilde B B_1^\dagger)\varepsilon\|_2}
{\|\varepsilon\|_2}
=0,
\]
so $F_{x_2}$ is differentiable on $\mathcal X_{x_2}$ with constant gradient 
\begin{equation}\label{eq:jac-slice}
\nabla F_{x_2}(x_1)=\tilde B B_1^\dagger,\qquad \forall x_1\in\mathcal X_{x_2}.
\end{equation}
By \eqref{eq:jac-slice} and \Cref{lem:constant-gradient-affine} (applied on the open connected set $\mathcal X_{x_2}$),
for each $x_2$, there exists a vector $v(x_2)\in\mathbb R^n$ such that
\begin{equation}\label{eq:affine-slice}
\Phi(\col(x_1,x_2))
=
(\tilde B B_1^\dagger)x_1 + v(x_2),
\qquad \forall x_1\in\mathcal X_{x_2}.
\end{equation}
Note that the ``slope'' matrix $\tilde B B_1^\dagger$ is independent of $x_2$. Finally, partition $\tilde B B_1^\dagger$ and $v(x_2)$ according to $\Phi=\col(\Phi_1,\Phi_2)$:
\[
\tilde B B_1^\dagger=
\begin{bmatrix}
C_1\\ C_2
\end{bmatrix},
\qquad
v(x_2)=
\begin{bmatrix}
g_1(x_2)\\ g_2(x_2)
\end{bmatrix}.
\]
Then, \eqref{eq:affine-slice} is equivalent to
\[
\Phi_1(x_1,x_2)=C_1x_1+g_1(x_2),
\quad
\Phi_2(x_1,x_2)=C_2x_1+g_2(x_2),
\]
for all $\col(x_1,x_2)\in\mathcal X$. This completes the proof.
\end{proof}

In \Cref{lem:part-control-affine}, since $\mathcal{X}$ is open and convex, so that each slice $\mathcal{X}_{x_2}$ is open and convex in $\mathbb{R}^{n_1}$ (thus naturally connected). Then, we can apply \Cref{lem:constant-gradient-affine} to confirm the affinity on $\mathcal{X}_{x_2}$. Note that \Cref{example:rank-deficient} satisfies all the assumptions of \Cref{lem:part-control-affine} and is consistent with its conclusions.

\vspace{-2mm}
\subsection{Linear predictors for 1D and 2D systems}\label{sec:low-dimensional}

We here use \cref{lem:part-control-affine,lem:affine-from-composition} to prove the necessity of the \CAP{} structure in \Cref{them:multi-steps} for nonlinear systems with low dimension. In particular, we consider systems with state dimension $n\leq 2$ to illustrate the core ideas. We will extend the argument to general nonlinear systems in the next section.

Consider the nonlinear system \cref{eqn:nonlinear} with state dimension $n$ and input dimension $m \geq 1$. As argued at the beginning of Section~\ref{sec:accurate-multi-step}, the existence of a one-step linear predictor requires the dynamics to be in the form \eqref{eq:one-step-linear}.

\textbf{Case 1: $n=1$}. In this case, we have $B \in \mathbb{R}^{1 \times m}$ in \cref{eq:one-step-linear}. Then, the matrix $B$ is either a zero matrix or of full row rank. If $B = \mathbb{0}$ in \eqref{eq:one-step-linear}, then we have 
$x^+ = \Phi(x), 
$ 
which is autonomous and already in the \CAP{} structure of \Cref{eq:CAP-structure}.

If $B$ has full row rank, then the existence of $N$-step linear predictor with $N = n + 1 = 2$ indicates that 
\[
    \Phi(\Phi(x) + Bu) = \tilde{\Phi}(x) + \tilde{B}u, \quad \forall x \in \mathcal{X}, u \in \mathcal{U}
    \]
with suitable function $\tilde{\Phi}$ and matrix $\tilde{B}$. By \Cref{lem:affine-from-composition}, this identity forces $\Phi$ to be affine in the form $\Phi(x) = Cx + v, \forall x \in \mathcal{X}$. Consequently, the system dynamics take the form
$$
x^+ = Cx + Bu +v 
$$
which is a special case of the \CAP{} structure in \Cref{eq:CAP-structure}. Therefore, we conclude that \Cref{them:multi-steps} holds for all systems with $n = 1$.

\textbf{Case 2: $n = 2$}. The matrix $B \in \mathbb{R}^{2 \times m}$ can be either the zero matrix, rank one, or full row rank. If $B$ is $\mathbb{0}$ or has full row rank, the proof follows exactly as \textbf{Case 1}. We therefore focus on the case when $\mathrm{rank}(B) = 1$. 

In this case, there exists an invertible change of coordinates $\tilde{x}:=Tx$ such that, in the transformed coordinates (with a slight abuse of notation), the dynamics can be written~as
\begin{equation}
\label{eq:rank1-form}
    \begin{bmatrix}
        \tilde{x}_1^+ \\ \tilde{x}_2^+
    \end{bmatrix} = \begin{bmatrix}
        \Phi_1(\tilde{x}_1, \tilde{x}_2) \\ \Phi_2(\tilde{x}_1, \tilde{x}_2)
    \end{bmatrix} + \begin{bmatrix}
        B_1 \\ \mathbb{0}
    \end{bmatrix}u,
    \end{equation}
where $\tilde{x}_1  \in \mathbb{R}$, $\tilde{x}_2 \in \mathbb{R}$, $B_1 \in \mathbb{R}^{1 \times m}$ has full row rank. 

Since the system admits a two-step linear predictor, the associated two-layer compositions are affine in $u$. 
Applying \Cref{lem:part-control-affine} to \eqref{eq:rank1-form} yields that both $\Phi_1$ and $\Phi_2$ are affine in the input-driven state $\tilde{x}_1$, \emph{i.e.}, there exist constants $c_1,c_2\in\mathbb{R}$ and functions $g_1,g_2$ such that
\begin{equation}
\label{eq:phi-affine-x1}
\begin{aligned}
\Phi_1(\tilde{x}_1,\tilde{x}_2)=c_1\tilde{x}_1+g_1(\tilde{x}_2), \\
\Phi_2(\tilde{x}_1,\tilde{x}_2)=c_2\tilde{x}_1+g_2(\tilde{x}_2).
\end{aligned}
\end{equation}

If $c_2 = 0$, then $\tilde{x}_2^+=g_2(\tilde{x}_2)$ is autonomous, and \eqref{eq:rank1-form}--\eqref{eq:phi-affine-x1} are exactly in the \CAP{} form \eqref{eq:CAP-structure}, with $\tilde{x}_1$ as the input-driven state and $\tilde{x}_2$ as the autonomous nonlinear generator.

If $c_2 \neq 0$, we show that both $g_1$ and $g_2$ in \cref{eq:phi-affine-x1} must be affine functions. Thus, \eqref{eq:rank1-form}--\eqref{eq:phi-affine-x1} is actually an affine system and is in the \CAP{} form \eqref{eq:CAP-structure} with no nonlinear generator.

In particular, with an initial state $\tilde{x} \in \mathbb{R}^2$, the $2$-step prediction of the state using \eqref{eq:rank1-form}--\eqref{eq:phi-affine-x1} can be written as  
    \begin{equation}
        \label{eq:dim-2-step-2}
        \begin{bmatrix}
            \tilde{x}_1(2) \\ \tilde{x}_2(2)
        \end{bmatrix} = \begin{bmatrix}
            \bar{g}_1(\tilde{x})  + D_1 u_{0:1}\\ \bar{g}_2(\tilde{x}) +  c_2B_1 u_0
        \end{bmatrix},
    \end{equation}
    for appropriate matrix $D_1$ and functions $\bar{g}_1,\bar{g}_2$. Since 
$c_2 \neq 0$ and $B_1$ has full row rank, the coefficient $c_2B_1$ is also full row rank. Substituting \eqref{eq:dim-2-step-2} into \eqref{eq:rank1-form}--\eqref{eq:phi-affine-x1}, the $3$-step prediction of the state becomes  
    \begin{equation}
    \label{eq:dim-2-step-3}
    \begin{bmatrix}
        \tilde{x}_1(3) \\ \tilde{x}_2(3)
    \end{bmatrix} \!=\! \begin{bmatrix}
        E_1 u_{0:2}\! +\! c_1\bar{g}_1(\tilde{x})\!+\!g_1(c_2B_1u_0\!+\!\bar{g}_2(\tilde{x})) \\ E_2u_{0:2}\! +\! c_2\bar{g}_1(\tilde{x}) \!+\! g_2(c_2B_1u_0\!+\!\bar{g}_2(\tilde{x}))
    \end{bmatrix},
    \end{equation}
    where $E_1, E_2$ are appropriate matrices. 
    
    Since the system also admit a $3$-step linear predictor $\tilde{x}(3) = \bar{\Phi}(\tilde{x}) + \bar{B}u_{0:2}$, the functions $g_1, g_2$ in \cref{eq:dim-2-step-3} must be affine in the input $u_0$ of the form 
    \begin{equation} \label{eq:dim-2-step-3-g}
    \begin{aligned}
        g_1(c_2B_1u_0 + \bar{g}_2(\tilde{x})) & = \delta_1(\tilde{x}) + G_1u_0, \\ 
        g_2(c_2B_1u_0 + \bar{g}_2(\tilde{x})) & = \delta_2(\tilde{x}) + G_2 u_0, \;\;\forall u_0 \in \mathcal{U}
    \end{aligned}
     \end{equation}
    where $\delta_i(\tilde{x})+G_i u_0$ correspond to the difference between the $3$-step linear predictor and $E_iu_{0:2} + c_i \bar{g}_1(\tilde{x})$ in \eqref{eq:dim-2-step-3} for $i = 1, 2$.

    Using a similar argument to \Cref{lem:affine-from-composition}, this identity \cref{eq:dim-2-step-3-g} in fact forces both $g_1$ and $g_2$  to be affine functions. Consequently, the system dynamics \eqref{eq:rank1-form}–\eqref{eq:phi-affine-x1} reduce to an affine system, which is a special case of the \CAP{} structure with no nonlinear generator. Thus, \Cref{them:multi-steps} holds for all systems with $n = 2$. 

\vspace{-2mm}
\subsection{$N$-step linear predictors for general nonlinear systems}
\label{subsec:general-nonlinear}

We now prove \Cref{them:multi-steps} for general nonlinear systems. The argument follows almost the same core ideas developed for the low-dimensional cases in \Cref{sec:low-dimensional}. The proof employs a recursive affine-composition strategy that progressively enforces structural constraints under multi-step linear predictability. Since the system state dimension is finite, this recursive procedure terminates after at most $n-1$ steps, yielding the desired \CAP{} structure.

Before presenting the full proof, let us illustrate the recursive idea. The existence of one-step linear prediction ensures that the system dynamics must be in the form of \cref{eq:one-step-linear}. If the matrix $B \in \mathbb{R}^{n \times m}$ is zero or of full row rank, the proof proceeds the same as \textbf{Case 1} in \Cref{sec:low-dimensional}. If the matrix $B \in \mathbb{R}^{n \times m}$ has rank $1 \leq r \leq n-1$, with a suitable coordinate change, the dynamics will be in the form of \Cref{eq:rank1-form}, where $\tilde{x}_1  \in \mathbb{R}^r$, $\tilde{x}_2 \in \mathbb{R}^{n-r}$, $B_1 \in \mathbb{R}^{r \times m}$ has full row rank. Using the same argument as \textbf{Case 2} in \Cref{sec:low-dimensional}, the existence of $2$-step linear prediction ensures that 
\begin{equation} \label{eq:CAP-general-1}
    \begin{bmatrix}
        \tilde{x}_1^+ \\ \tilde{x}_2^+
    \end{bmatrix} = \begin{bmatrix}
       C_1 \tilde{x}_1 + g_1(\tilde{x}_2)\\ C_2 \tilde{x}_1 + g_2(\tilde{x}_2)
    \end{bmatrix} + \begin{bmatrix}
        B_1 \\ \mathbb{0}
    \end{bmatrix}u,
\end{equation}
where $C_1 \in \mathbb{R}^{r \times r}, C_2 \in \mathbb{R}^{(n-r) \times r}$, and $g_1, g_2$ are two suitable functions. If the matrix $C_2$ is $\mathbb{0}$, the dynamics \Cref{eq:CAP-general-1} are already in the \CAP{} form. If $C_2$ is of full row rank, the existence of a $3$-step linear predictor guarantees that both $g_1$ and $g_2$ in \cref{eq:CAP-general-1} must be affine functions. Thus, the dynamics in \cref{eq:CAP-general-1} are affine. 

Unlike \textbf{Case 2} in \Cref{sec:low-dimensional}, the matrix $C_2$ in \cref{eq:CAP-general-1} might be neither zero nor of full row rank. At this point, we need to apply a general principle of affine composition to reveal further affine structure in $g_1$ and $g_2$ (see \Cref{lemma:func-affine-rel} in \Cref{appendix:func-affine-rel}). Then, the resulting dynamics will be in a form similar to \cref{eq:CAP-general-1}, where the nonlinear components only involve a portion of the state $\tilde{x}_2$. We can recursively apply this reduction at most $n-1$ times to finally reveal the desired \CAP{} structure.

To formalize the idea above, we consider a dynamical system of the form 
\begin{equation}
\label{eqn:nonlinear-concen}
\begin{aligned}
    x_1^+ &= M_1 u + C_1 x_{1:k-1} + g_1(x_k), \\
    x_2^+ &= M_2 x_1 + C_2 x_{2:k-1} + g_2(x_k),  \\
    x_3^+ & = M_3 x_2 + C_3 x_{3:k-1} + g_3(x_k), \\
    & \ \ \vdots \\
    x_{k-1}^+ & = M_{k-1} x_{k-2} + C_{k-1} x_{k-1} + g_{k-1}(x_k), \\
    x_{k}^+ & =  M_k x_{k-1} + g_k(x_k),
\end{aligned}
\end{equation}
where $\col(x_1, \ldots, x_k) := x$ with $x_i \in \mathbb{R}^{n_i}$ for $i = 1, \ldots, k$ and $M_i$ has full row rank for $i =1, \ldots, k-1$. The key feature of this system is that all its nonlinearities come from the sub-state $x_k$. The system \Cref{eq:CAP-general-1}, which has a 2-step linear predictor, is a special case of \cref{eqn:nonlinear-concen} with $k = 2$. We may also view the one-step linear predictor \Cref{eq:one-step-linear} as a special case of \cref{eqn:nonlinear-concen} with $k = 1$. Indeed, it is not difficult to see that the system \cref{eqn:nonlinear-concen} admits $N$-step linear predictions with $N \in \{1, \ldots, k\}$.

If the system \cref{eqn:nonlinear-concen} further admits a $(k+1)$-step linear predictor, then we can reveal its further structural property, depending on the rank of the coefficient $M_k$ in the dynamics of the last state $x_k$. We have the following result.

\begin{proposition}[Rank-based characterization for $(k+1)$-step linear predictor]
\label{proposition:k+1-linear-predictor}
    Consider the dynamical system \Cref{eqn:nonlinear-concen}. Suppose it admits a $(k+1)$-step linear predictor. The following statements hold.
    \begin{enumerate}
        \item If the coefficient matrix $M_k$ is $\mathbb{0}$ or has full row rank, then the system \Cref{eqn:nonlinear-concen} has a \CAP{} structure.
        \item If $M_k$ has row rank $r_k < n_k$, then there exists an invertible matrix $\bar{T} \in \mathbb{R}^{n_k \times n_k}$ such that with a coordinate change $\col(\bar{x}_1, \ldots, \bar{x}_{k+1}):= \col([I_{n-n_k} \ \mathbb{0}], [\mathbb{0} \ \bar{T}]) x$, the system dynamics becomes
\begin{equation}
\label{eqn:nonlinear-concen-decomp}
\begin{aligned}
    \bar{x}_1^+ &= \bar{M}_1 u + \bar{C}_1 \bar{x}_{1:k} + \bar{g}_1(\bar{x}_{k+1}), \\
    \bar{x}_2^+ &= \bar{M}_2 \bar{x}_1 + \bar{C}_2 \bar{x}_{2:k} + \bar{g}_2(\bar{x}_{k+1}),  \\
    \bar{x}_3^+ & = \bar{M}_3 \bar{x}_2 + \bar{C}_3 \bar{x}_{3:k} + \bar{g}_3(\bar{x}_{k+1}), \\
    & \ \ \vdots \\
    \bar{x}_{k}^+ & = \bar{M}_{k} x_{k-1} + \bar{C}_{k} \bar{x}_{k} + \bar{g}_{k}(\bar{x}_{k+1}), \\
    \bar{x}_{k+1}^+ & =  \bar{M}_{k+1} \bar{x}_{k} + \bar{g}_{k+1}(\bar{x}_{k+1}),
\end{aligned}
\end{equation}
where $\bar{M}_i$  has full row rank for $i =1,\ldots, k$, $\bar{C}_i$ and $\bar{g}_i$ are suitable matrices and functions for $i =1,\ldots, k+1$.  
    \end{enumerate}
\end{proposition}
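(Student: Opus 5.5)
The idea is to exploit the chain structure of \cref{eqn:nonlinear-concen}: the input reaches $x_k$ only after $k-1$ steps, through $x_{k-1}$. Since each $M_i$ ($i\le k-1$) has full row rank, the product $\mathcal{M} := M_{k-1}\cdots M_1$ has full row rank. The first step is a direct unrolling of the recursion. For any initial state $x$, it gives
\[
x_{k-1}(k-1) = \mathcal{M}u_0 + \ell(x, u_{1:k-2}),
\]
where $\ell$ is affine in the later inputs and the $u_0$-coefficient is exactly $\mathcal{M}$. Likewise $x_k(k) = M_k \mathcal{M} u_0 + \bar\ell(x,u_{1:k-1})$ plus terms that do not involve $u_0$. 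This uses the fact that $x_k$ is untouched by the input until step $k$. Indeed, $x_k(j)$ for $j\le k-1$ depends only on $x$, and $x_k(k)=M_kx_{k-1}(k-1)+g_k(x_k(k-1))$.

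Next, I evaluate $x(k+1)$. Every block gets a contribution $g_i(x_k(k))$, and the remaining terms are already affine in $u_{0:k}$. The $(k+1)$-step linear predictor therefore forces $g_i(x_k(k))$ to be affine in the inputs for every $i$. In particular, $u_0 \mapsto g_i(w(x) + M_k\mathcal{M}u_0)$ must be affine, where $w(x)$ collects the input-independent part; the other inputs can be frozen. This is the analogue of \cref{eq:dim-2-step-3-g}.

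For part 1, the case $M_k=\mathbb{0}$ is immediate: $x_k^+=g_k(x_k)$ is autonomous, and stacking $x_{1:k-1}$ as the affine input-driven block gives the \CAP{} form \cref{eq:CAP-structure}. If $M_k$ has full row rank, then $M_k\mathcal{M}$ has full row rank. I then repeat the argument of \Cref{lem:affine-from-composition}, using \Cref{assum:one-step-predictor} (surjectivity, openness of $\mathcal{U}$, convexity of $\mathcal{X}$) and the openness of the reachable set of $x_k(k)$. This shows that every $g_i$ has constant gradient, so \Cref{lem:constant-gradient-affine} makes it affine, and the whole system is affine, which is a trivial \CAP{} form.

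For part 2, I choose an invertible $\bar T$ with $\bar T M_k = \col(\bar M_{k+1}',\mathbb{0})$, where the top block has full row rank $r_k$, and split $\bar T x_k = \col(\bar x_k,\bar x_{k+1})$. In these coordinates the input perturbation reaches only the $\bar x_k$ directions, through a full-row-rank coefficient. This is exactly the setting of \Cref{lem:part-control-affine}, applied slice-wise with $\bar x_{k+1}$ frozen (slices are open and convex). It yields $g_i(\bar T^{-1}\col(\bar x_k,\bar x_{k+1})) = \bar C_i'\bar x_k + \bar g_i(\bar x_{k+1})$ with constant $\bar C_i'$; this is presumably the content of \Cref{lemma:func-affine-rel}. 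Substituting back, the affine part in $\bar x_k$ is absorbed into the $\bar C_i$. The equation for $x_k$ splits into an equation for $\bar x_k$, whose coefficient on $x_{k-1}$ is the full-row-rank block $\bar M_k$, and an equation for $\bar x_{k+1}$, which depends on $x_{k-1}$ only through the zero block. Hence $\bar x_{k+1}$ evolves as $\bar M_{k+1}\bar x_k + \bar g_{k+1}(\bar x_{k+1})$, where $\bar M_{k+1}$ comes from the $\bar C'$ term. That term is not automatically of full row rank, which matches the statement, since full rank is claimed only for $i\le k$. This gives \cref{eqn:nonlinear-concen-decomp}.

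The main obstacle is the slice-wise differentiability argument. Here $g_i$ is defined on the set of values of $x_k(k)$, so I must check two things. First, every $x_k$-value (with its slice) is reachable as $w(x)+M_k\mathcal{M}u_0$ with $u_0$ interior. Second, perturbing $u_0$ moves $x_k(k)$ in all of the $\bar x_k$ directions while staying within $\mathcal{X}$. I would handle this by iterated use of surjectivity of $f$, so that every point of $\mathcal{X}$ is $x(k)$ for some initial state and inputs, together with openness of $\mathcal{U}$. I would then need to show that $w$ absorbs the remaining dependence consistently, so that the resulting slope matrix is independent of the base point.
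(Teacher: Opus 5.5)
Your proposal is correct and is essentially the paper's argument for statement 2. You split $x_k$ with an invertible $\bar T$ so that only $\bar x_k$ is driven by $x_{k-1}$. You unroll to obtain $\tilde g_i(h_k(\bar x)+Mu_0,\,h_{k+1}(\bar x))=\delta_i(x)+G_iu_0$, where $M=\hat M_kM_{k-1}\cdots M_1$ has full row rank. You then conclude slice-wise affineness, which is the content of \Cref{lemma:func-affine-rel}.

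The base-point independence of the slope, which you leave open, needs nothing further about $w$. It holds because $G_i$ is a constant matrix, inherited from the $x$-independent input matrix of the $(k+1)$-step predictor. Hence the gradient on every slice is the same $G_iM^\dagger$.

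The one real difference is the full-row-rank case of statement 1. There you run the unsliced argument directly on $\mathrm{proj}_{x_k}(\mathcal X)$. This is cleaner than the paper's detour, which appends an auxiliary autonomous state $\hat x$ to make the coefficient rank-deficient and then invokes statement 2.
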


The proof of this result uses similar arguments as those in \cref{sec:low-dimensional} and we present the details in \Cref{appendix:k+1-linear-predictor}. 
Since we consider $N$-step linear prediction with $N \geq 2$, we need to use a slightly more general principle of affine composition than \Cref{lem:part-control-affine}. This affine composition principle is used in the proof of \Cref{proposition:k+1-linear-predictor}, and we present it in \Cref{appendix:func-affine-rel}.

From \eqref{eqn:nonlinear-concen} to \eqref{eqn:nonlinear-concen-decomp}, we essentially decompose the state $x_k$ into two components $\bar{x}_k$ and $\bar{x}_{k+1}$, and the nonlinearity is only introduced by $\bar{x}_{k+1}$ in the new coordinate. Note that the dimension of $\bar{x}_{k+1}$ is strictly smaller than that of $x_k$ so that the nonlinearity is concentrated in a smaller state. This reduction is the key proof step for \Cref{them:multi-steps}. 

\textbf{Proof of \Cref{them:multi-steps}:} 
    Since the form $x^+ = \Phi(x) + Bu$ is a specific case of \eqref{eqn:nonlinear-concen} (\emph{i.e.}, $k=1$) and it admits $N$-step linear predictors with $N=\{1,\ldots,n+1\}$, we can apply \Cref{proposition:k+1-linear-predictor} $\bar{n}$ times until the coefficient for the linear term in the nonlinear generator is $\mathbb{0}$ or full row rank. Specifically, by iteratively using statement $2$ in \Cref{proposition:k+1-linear-predictor}, there exists a series of matrices $T_0, \ldots, T_{\bar{n}-1}$, such that the dynamic system
    \vspace{-1mm}
    \begin{equation}
\label{eqn:nonlinear-cap}
\begin{aligned}
    \hat{x}_1^+ &= M_1 u + C_1 \hat{x}_{1:\bar{n}} + g_1(\hat{x}_{\bar{n}+1}), \\
    \hat{x}_2^+ &= M_2 \hat{x}_1 + C_2 \hat{x}_{2:\bar{n}} + g_2(\hat{x}_{\bar{n}+1}),  \\
    \hat{x}_3^+ & = M_3 \hat{x}_2 + C_3 \hat{x}_{3:\bar{n}} + g_3(\hat{x}_{\bar{n}+1}), \\
    & \ \ \vdots \\
    \hat{x}_{\bar{n}}^+ & = M_{\bar{n}} \hat{x}_{\bar{n}-1} + C_{\bar{n}} \hat{x}_{\bar{n}} + g_{\bar{n}}(\hat{x}_{\bar{n}+1}), \\
    \hat{x}_{\bar{n}+1}^+ & =  M_{\bar{n}+1} \hat{x}_{\bar{n}} + g_{\bar{n}+1}(\hat{x}_{\bar{n}+1}),
\end{aligned}
\vspace{-1mm}
\end{equation}
where $\col(\hat{x}_1,\ldots,\hat{x}_{\bar{n}+1}) = \prod_{i = 0}^{\bar{n}-1}T_i x$, satisfies $M_i$ is full row rank for $i \!=\! 1,\ldots, \bar{n}$ and $M_{\bar{n}+1}$ is $\mathbb{0}$ or full row rank. We note that $\bar{n}+1 \!\le\! n$ is guaranteed as each application of \Cref{proposition:k+1-linear-predictor} strictly reduces the dimension of the nonlinear generator.

The system \eqref{eqn:nonlinear-cap} admits an $(\bar{n}+2)$-step linear predictor because it admits $N$-step linear predictors with $N$ up to $(n+1)$ and $\bar{n}+1 \le n$. As $M_{\bar{n}+1}$ is $\mathbb{0}$ or full row rank, we can apply statement~$1$ in \Cref{proposition:k+1-linear-predictor}, which implies \Cref{eqn:nonlinear-cap} is in \CAP{} structure. This completes the proof.
\hfill $\square$

\vspace{-1mm}

\section{Existence of Koopman linear embeddings}
\label{sec:accurate-Koopman}

In this section, we begin by showing two structural properties of Koopman linear embeddings and then use them in conjunction with \Cref{them:multi-steps} to prove \Cref{them:accur-Koop}.

\vspace{-2mm}
\subsection{Two properties of Koopman linear embeddings} \label{subsection:properties}

We here present two useful structural properties of Koopman linear embeddings. In particular, if a nonlinear system admits a Koopman linear embedding, then it also admits another Koopman linear embedding via a suitable change of coordinates that satisfies the following additional properties:
\begin{enumerate}
    \item the lifted linear system is observable, and  
\item the lifting function $\Psi: \mathcal{X} \to \mathbb{R}^{n_z}$ can be chosen to strictly separate linear and nonlinear components, \emph{i.e.},
\[
\Psi(x) := \col\big(x,\bar{\psi}(x)\big),
\]
where $\bar{\psi}(x)$ contains no linear terms in $x$.
\end{enumerate}

We first establish that the existence of a Koopman linear embedding guarantees the existence of an observable one.
\begin{lemma}[Observable Koopman linear embedding]
\label{lem:koop-ob}
    Consider the nonlinear system~\cref{eqn:nonlinear} and suppose it admits a Koopman linear embedding with lifting function
$\Psi:\mathcal{X}\to\mathbb{R}^{n_z}$, 
    \begin{equation}
    \label{eqn:accu-Koop-nob}
    \Psi(f(x, u)) = A_\K \Psi(x) + B_\K u, \quad x = C_\K \Psi(x) ,
    \end{equation}
    where the pair $(A_\K, C_\K)$ is not observable. 
Then, the system also admits an \emph{observable} Koopman linear embedding with another lifting function
$\bar{\Psi}:\mathcal{X}\to\mathbb{R}^{\bar n_z}$,
    \begin{equation}
    \label{eqn:accu-Koop-ob}
    \bar{\Psi}(f(x,u)) = \bar{A}_\K \bar{\Psi}(x) + \bar{B}_\K u, \quad x = \bar{C}_\K \bar{\Psi}(x) ,
     \end{equation}
    where $(\bar{A}_\K, \bar{C}_\K)$ is observable. 
\end{lemma}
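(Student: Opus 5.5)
The plan is to use the classical Kalman observability decomposition of the pair $(A_\K, C_\K)$ and to project the lifting onto the quotient by the unobservable subspace. Let $\mathcal{N} := \bigcap_{i=0}^{n_z-1} \ker(C_\K A_\K^i)$ denote the unobservable subspace, which is $A_\K$-invariant and contained in $\ker C_\K$. Choose an invertible $S \in \mathbb{R}^{n_z \times n_z}$ whose last columns span $\mathcal{N}$. In the coordinates $S^{-1}$, the standard decomposition gives
\begin{equation*}
S^{-1} A_\K S = \begin{bmatrix} A_{o} & \mathbb{0} \\ A_{21} & A_{\bar{o}} \end{bmatrix}, \quad S^{-1} B_\K = \begin{bmatrix} B_o \\ B_{\bar o} \end{bmatrix}, \quad C_\K S = \begin{bmatrix} C_o & \mathbb{0} \end{bmatrix},
\end{equation*}
and the pair $(A_o, C_o)$ is observable. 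Here the zero block in the upper right follows from the $A_\K$-invariance of $\mathcal{N}$, and the zero block in $C_\K S$ follows from $\mathcal{N} \subseteq \ker C_\K$.

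Next, define $\bar{\Psi}(x) := [I_{\bar n_z} \ \mathbb{0}]\, S^{-1}\Psi(x)$, where $\bar n_z = n_z - \dim \mathcal{N}$. Multiply the identity $\Psi(f(x,u)) = A_\K\Psi(x) + B_\K u$ on the left by $S^{-1}$ and take the first block row. Because of the block-triangular structure, this yields $\bar{\Psi}(f(x,u)) = A_o \bar{\Psi}(x) + B_o u$. Similarly, the output relation gives $x = C_\K S S^{-1}\Psi(x) = C_o \bar{\Psi}(x)$. Setting $\bar{A}_\K := A_o$, $\bar{B}_\K := B_o$, and $\bar{C}_\K := C_o$ produces \cref{eqn:accu-Koop-ob} with $(\bar{A}_\K, \bar{C}_\K)$ observable.

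The only point needing care is the requirement in \Cref{def:Koopman-linear} that the lifting functions be linearly independent. The components of $\bar{\Psi}$ are linear combinations of those of $\Psi$ under a full-row-rank matrix $[I \ \mathbb{0}]S^{-1}$, so they are linearly independent whenever the components of $\Psi$ are. If one does not assume independence of $\Psi$ from the outset, I would first remove redundant components. A dependence relation $w^\tr \Psi \equiv 0$ can be eliminated by a further coordinate change that quotients out the span of such $w$. This span is invariant in the appropriate dual sense, since $w^\tr\Psi(f(x,u)) = 0$ for all $x,u$ forces $w^\tr A_\K \Psi \equiv 0$ and $w^\tr B_\K = 0$. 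For that step I would argue through the surjectivity of $f$ and the openness of $\mathcal{U}$.

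Apart from this bookkeeping, the argument is routine linear systems theory. The main step is verifying that the upper-right block vanishes, which is exactly the $A_\K$-invariance of $\mathcal{N}$.
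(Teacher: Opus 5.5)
Your proposal is correct and follows the paper's proof essentially verbatim. The paper also applies the Kalman observability decomposition, with your $S^{-1}$ playing the role of its $T$, keeps only the observable block via $\bar{\Psi}(x) := [I_r\;\;\mathbb{0}]\,T\Psi(x)$, and sets $\bar A_\K = \hat A_{11}$, $\bar B_\K = \hat B_1$, $\bar C_\K = \hat C_1$; your remark that the components of $\bar\Psi$ stay linearly independent, because $[I\;\;\mathbb{0}]S^{-1}$ has full row rank, is also correct and spells out a detail the paper leaves implicit.
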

\begin{proof}
    The proof is constructive and relies on the Kalman observability decomposition. 
    Let $z:=\Psi(x)$ denote the lifted state. Since $(A_\K,C_\K)$ is not observable, there exists an invertible matrix $T \in \mathbb{R}^{n_z\times n_z}$ such that, under the new coordinate $\col(\hat{z}_1, \hat{z}_2):= Tz$, the lifted linear system becomes
\[
\begin{bmatrix}
    \hat{z}_1^+ \\ \hat{z}_{2}^+
\end{bmatrix}  = \begin{bmatrix}
    \hat{A}_{11} & \mathbb{0} \\ \hat{A}_{21} & \hat{A}_{22}
\end{bmatrix}
\begin{bmatrix}
\hat{z}_1 \\ \hat{z}_{2}
\end{bmatrix} + \begin{bmatrix}
    \hat{B}_1 \\ \hat{B}_{2}
\end{bmatrix} u, \quad 
x = \begin{bmatrix}
    \hat{C}_1 & \mathbb{0}
\end{bmatrix}
\begin{bmatrix}
\hat{z}_1 \\ \hat{z}_{2}
\end{bmatrix},
\]
where 
$$\begin{bmatrix}
    \hat{A}_{11} & \mathbb{0} \\ \hat{A}_{21} & \hat{A}_{22}
\end{bmatrix}= TA_\K T^{-1}, \begin{bmatrix}
    \hat{B}_1 \\ \hat{B}_2
\end{bmatrix} = TB_\K, \begin{bmatrix}
    \hat{C}_1 & \mathbb{0}
\end{bmatrix} = C_\K T^{-1},$$ and $(\hat{A}_{11}, \hat{C}_1)$ is observable.

The state $\hat z_1$ corresponds to the observable component of the lifted state, while $\hat z_2$ is unobservable and does not affect the reconstruction of $x$. Consequently, the reduced lifted dynamics
\[
\hat z_1^+ = \hat A_{11}\hat z_1 + \hat B_1 u,
\qquad
x = \hat C_1 \hat z_1,
\]
constitutes a valid Koopman linear embedding. Defining
\[
\bar{\Psi}(x) := [I_r\;\;\mathbb{0}]\,T\Psi(x),\,
\bar A_\K := \hat A_{11},\,
\bar B_\K := \hat B_1,\,
\bar C_\K := \hat C_1,
\]
where $r=\dim(\hat z_1)$, gives the desirable \eqref{eqn:accu-Koop-ob}. 
\end{proof}

We next show that the linear and nonlinear components of a Koopman lifting can be strictly separated. 

\begin{lemma}[Linearity $\&$ nonlinearity separable Koopman lifting]
\label{lem:koop-seper}
    Consider the nonlinear system \cref{eqn:nonlinear} and suppose it admits a Koopman linear embedding with associated lifting function $\Psi \!:\! \mathcal{X}\!\! \rightarrow \!\! \mathbb{R}^{n_z}$. Then, there exists an invertible matrix $T \! \!\in \! \mathbb{R}^{n_z\times n_z}$, such that the new lifting function $\bar{\Psi} \! :=\! T \Phi$ satisfies:
    $$
    \bar{\Psi}(x) = \col(x, \bar{\psi}(x)), 
    $$
    where $\bar{\psi}$ does not contain any linear term in $x$. Moreover, $\bar{\Psi}$ induces another Koopman linear model, and the new linear model is observable if the original one is observable. 
\end{lemma}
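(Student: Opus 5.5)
The plan is a linear change of basis in the lifted space. The key observation is that $x = C_\K \Psi(x)$ holds for all $x \in \mathcal{X}$. Since $\mathcal{X}$ is open, the coordinate functions $x_1,\ldots,x_n$ are linearly independent on $\mathcal{X}$. Hence $C_\K \in \mathbb{R}^{n\times n_z}$ has full row rank $n$, so $n \le n_z$, and the $n$ state coordinates lie in $\mathrm{span}\{\psi_1,\ldots,\psi_{n_z}\}$.

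\textbf{Step 1: build $T$.} Complete the rows of $C_\K$ to a basis of $\mathbb{R}^{1\times n_z}$ and set $T := \col(C_\K, W)$, with $W \in \mathbb{R}^{(n_z-n)\times n_z}$ chosen so that $T$ is invertible. Then $T\Psi(x) = \col(x, W\Psi(x))$.

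\textbf{Step 2: remove linear terms from the second block.} The phrase ``contains no linear terms in $x$'' should be made precise as: no nonzero linear combination of the components of $\bar\psi$, possibly together with $x$, is an affine function of $x$. I would refine $W$ as follows. Let $\mathcal{L} := \mathrm{span}\{\psi_i\} \cap \{x \mapsto a^\tr x\}$ be the subspace of linear functionals inside the span. Since $\mathcal{L}$ is spanned by the rows of $C_\K$, it is exactly $\{x\mapsto a^\tr x\}$. Decompose $\mathrm{span}\{\psi_i\} = \mathcal{L} \oplus \mathcal{N}$, where $\mathcal{N}$ is a complement. Its basis can be chosen so that no nonzero element is linear; for instance, pick a complement of the row space of $C_\K$ in coefficient space. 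If some element $w^\tr\Psi$ with $w \notin \mathrm{row}(C_\K)$ were linear, say $a^\tr x = a^\tr C_\K\Psi$, then $(w - C_\K^\tr a)^\tr \Psi \equiv 0$ would contradict linear independence of the $\psi_i$ (assumed as in \Cref{def:Koopman-linear}; otherwise first prune $\Psi$ to a linearly independent subset, which still yields a Koopman model). So any complement works. Taking $\bar\psi := W\Psi$ with the rows of $W$ spanning such a complement, no nontrivial combination of $\bar\psi$ is linear in $x$.

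\textbf{Step 3: transfer the model.} With $\bar\Psi = T\Psi$, multiply the Koopman identity \cref{eqn:Koop-thm2} by $T$:
$\bar\Psi(f(x,u)) = TA_\K T^{-1}\bar\Psi(x) + TB_\K u$, and $x = C_\K T^{-1}\bar\Psi(x) = [I_n\ \mathbb{0}]\bar\Psi(x)$.
Observability is invariant under similarity: the observability matrix of $(TA_\K T^{-1}, C_\K T^{-1})$ equals $\mathcal{O}T^{-1}$, so it has the same rank as the original.

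\textbf{Main obstacle.} The main difficulty is the precise meaning of ``no linear term.'' The components of $\bar\psi$ may individually contain linear parts, as in $x_1 + x_2^2$. One can subtract these parts, but only when the decomposition into linear plus nonlinear parts is well defined for general functions. I would handle this by the span-based definition used in Step 2, which is what later arguments need: the only linear functionals in $\mathrm{span}(\bar\Psi)$ come from the first block. Optionally, if $\psi_i$ has a well-defined linear part $\ell_i(x)$ (e.g., analytic at a point), I would replace $\bar\psi_j$ by $\bar\psi_j - \ell_j(x)$. This is again an invertible linear transformation, since $\ell_j(x) = \ell_j C_\K \Psi$, and it keeps the Koopman property by the same argument as in Step 3.
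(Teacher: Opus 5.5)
Your Steps 1 and 3 coincide with the paper's proof. The paper also completes the rows of $C_\K$ to an invertible $T_1$ with $T_1\Psi(x)=\col(x,\tilde\psi(x))$. It then finishes with $(TA_\K T^{-1},\,TB_\K,\,C_\K T^{-1})$ and similarity invariance of observability. The real difference is Step 2. The paper reads ``no linear term'' term-wise: it splits $\tilde\psi(x)=\bar\psi(x)+\bar C x$ and removes $\bar C x$ with $T_2:=\col([I_n\ \mathbb{0}],[-\bar C\ I_{n_z-n}])$. That is exactly your ``optional'' step. Your span-based reading has two advantages. It is well defined for arbitrary functions, and you are right that the paper never says what the linear part of a general $\tilde\psi$ is. It also makes Step 2 automatic, since by linear independence any complement of $\mathrm{row}(C_\K)$ works. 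The paper's two-step construction, by contrast, needs only that $C_\K$ has full row rank.

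However, your reading is strictly weaker than what the paper uses downstream, so your claim that it ``is what later arguments need'' is not right as the paper is written. For the slow-manifold system \cref{eqn:slow-manifold}, $\Psi=\col(x_1,x_2,x_2^2+x_1)$ satisfies your condition. Yet the proof of \Cref{them:accur-Koop} concludes $\tilde C=0$ from $\psi(x_1,x_2)=\tilde\psi(x_2)+\tilde C x_1$ together with ``no linear terms''. That inference fails for this lifting, where $\tilde C=1$. The subtraction step should therefore be kept as mandatory, or that proof needs an extra change of basis.

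There is also a slip in your definition. It says no combination is ``affine'', while your argument only concerns linear functionals. The affine version is false in general. If $1\in\mathrm{span}\,\Psi$, as in the affine-system embedding $\Psi(x)=\col(x,1)$, no choice of complement avoids affine elements. Replace ``affine'' by ``linear''.
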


\begin{proof}
    Our proof is constructive. First, each coordinate of $x$ is linearly independent, and $x$ is contained in the function space spanned by $\Psi$. We can thus find an invertible matrix $T_1 \in \mathbb{R}^{n_z \times n_z}$ to change the basis such that 
    $$
    \col(x, \tilde{\psi}(x)) = T_1 \Psi(x).
    $$ 
    Next, we decompose $\tilde{\psi}(x)$ into its linear and nonlinear parts in $x$ as 
$
\tilde{\psi}(x)=\bar{\psi}(x)+\bar{C}x,
$ 
where $\bar{C} \in \mathbb{R}^{(n_z-n) \times n}$ is a constant matrix and $\bar{\psi}(x)$ contains no linear terms in $x$.

Then, we construct another invertible matrix $T_2 \!\!:=\!\! \col([I_n \ \mathbb{0}], $ $[-\bar{C} \ I_{n_z-n}])$, which leads to  
\[
    \bar{\Psi}(x): = \begin{bmatrix}
        x \\ \bar{\psi}(x)
    \end{bmatrix} = T \Psi(x) = \begin{bmatrix}
        x \\ \tilde{\psi}(x) - \bar{C} x
    \end{bmatrix},
\]
where $T:= T_2 T_1$.
It is easy to derive a new Koopman linear embedding using the new lifting $\bar{\Psi}$. Indeed, we have $x  = \bar{C}_\K \bar{\Psi}(x) = C_\K T^{-1} \bar{\Psi}(x)$ and 
\[
    \begin{aligned}
    \bar{\Psi}(f(x,u)) & = \bar{A}_\K \bar{\Psi}(x) + \bar{B}_\K u = TA_\K T^{-1} \bar{\Psi}(x) + TB_\K u. 
    \end{aligned}
\]

Finally, it is known that the observability of linear systems is invariant with respect to any coordinate transformation \cite{chen1984linear}. This completes the proof. 
\end{proof}

\subsection{Proof of \Cref{them:accur-Koop}} \label{subsection:proof-Koopman}
With the results in \Cref{them:multi-steps,lem:koop-ob,lem:koop-seper}, we are now ready to prove \Cref{them:accur-Koop}. We establish the equivalency from both directions. 

   \textbf{Direction 1: \Cref{eq:CAP+Koopman-invariant} $\Rightarrow$ \Cref{eqn:Koop-thm2}}: this direction is straightforward. In particular, choosing the lifting function $\Psi := \hat{\Psi} \circ T$ with $\hat{\Psi}(x_1, x_2) := \col(x_1, \bar{\Psi}(x_2))$ gives us a Koopman linear embedding with matrices in \cref{eq:Koopman-linear-model-construction}. 
    
   \textbf{Direction 2: \Cref{eqn:Koop-thm2} $\Rightarrow$ \Cref{eq:CAP+Koopman-invariant}}: This direction is more involved. First, the existence of the Koopman linear embedding \Cref{eqn:Koop-thm2} naturally guarantees the existence of $N$-step linear predictors with any positive $N$. Thus, \Cref{them:multi-steps} guarantees that, up to a suitable coordinate transformation, the nonlinear system can be written into a \CAP{} structure of the form  
    \begin{equation}
    \label{eqn:multi-thm2}
    \begin{bmatrix}
        x_1^+ \\ x_2^+
    \end{bmatrix} = \begin{bmatrix}
        g(x_2) + Cx_1 \\ h(x_2)
    \end{bmatrix} + \begin{bmatrix}
        D \\ \mathbb{0}
    \end{bmatrix} u,
    \end{equation}
    where $x_1 \in \mathbb{R}^{n_1}, x_2 \in \mathbb{R}^{n_2}$ and $n = n_1 + n_2$. In the proof below, without loss of generality, we assume the system dynamics are in the form of \cref{eqn:multi-thm2}. 

    We only need to prove that the nonlinear component in \cref{eqn:multi-thm2} satisfies the additional Koopman invariant property, stated in \Cref{eq:Koopman-closed-autonomous}. Thanks to \Cref{lem:koop-ob,lem:koop-seper}, we deduce that the Koopman linear embedding \Cref{eqn:Koop-thm2} satisfies two structural properties: 
   \begin{enumerate}
       \item the matrix pair $(A_\K, C_\K)$ is observable (\Cref{lem:koop-ob});  
       \item the Koopman lifting $\Psi$ is in the form of (\Cref{lem:koop-seper})
       \begin{equation} \label{eq:lifting-for-CAP}
       \Psi(x) := \col(x_1, x_2, \psi(x_1, x_2))
       \end{equation}
       where $\psi(x_1, x_2)$ contain no linear terms in $x_1$ and $x_2$. 
   \end{enumerate}

If the nonlinear component $\psi(x_1, x_2)$ in \cref{eq:lifting-for-CAP} were independent of~$x_1$, then we would derive the desired property \Cref{eq:Koopman-closed-autonomous}. Indeed, 
     in this case, we write $\psi(x_1, x_2) = \tilde{\psi}(x_2)$ and let 
     $$
     \bar{\Psi}(x_2) := \col(x_2, \tilde{\psi}(x_2)).
     $$ 
     One-step system propagation gives us  
    \begin{equation} \label{eq:Koopman-propagation}
    \begin{bmatrix}
        A_{11} & A_{12} \\ A_{21} & A_{22}
    \end{bmatrix}\! \begin{bmatrix}
         x_1 \\ \bar{\Psi}(x_2)
    \end{bmatrix} \!+\! \begin{bmatrix}
        B_1 \\ B_2
    \end{bmatrix}\! u \!=\!\begin{bmatrix}
     g(x_2)\! +\! Cx_1 \\ \bar{\Psi}(h(x_2))
    \end{bmatrix} \!+\! \begin{bmatrix}
        D \\ \mathbb{0} 
    \end{bmatrix}\! u,
    \end{equation}
    where we have partitioned the matrix $A_\K$ as $\begin{bmatrix}
        A_{11} & A_{12} \\ A_{21} & A_{22}
    \end{bmatrix}$ and $B_\K$ as $\col(B_1, B_2)$. 

    By separating \cref{eq:Koopman-propagation}, the following equalities hold for any $(x, u) \in \mathcal{X} \times \mathcal{U}$
    \begin{subequations}
        \begin{align}
            A_{11}x_1 + A_{12} \bar{\Psi}(x_2) + B_1u & = g(x_2)+ Cx_1 + Du, \label{eq:Koopman-prop-1} \\
            A_{21}x_1 + A_{22} \bar{\Psi}(x_2) + B_2u & = \bar{\Psi}(h(x_2)). \label{eq:Koopman-prop-2}
        \end{align}
    \end{subequations}
    Since both $\mathcal{X}$ and $\mathcal{U}$ have non-empty interior, we have $A_{11} = C$ and $B_1 = D$ for \eqref{eq:Koopman-prop-1}, and $A_{21} = \mathbb{0}$ and $B_2 = \mathbb{0}$ for \eqref{eq:Koopman-prop-2}. That leads to 
    \[
    A_{12}\bar{\Psi}(x_2) = g(x_2), \quad A_{22}\bar{\Psi}(x_2) = \bar{\Psi}(h(x_2)).
    \]
    Thus, we can choose $\bar{A}_\K = A_{22}$ and $\bar{C}_\K = \begin{bmatrix}
        [I_{n_2} \ \mathbb{0}] \\  A_{12}
    \end{bmatrix}$, which leads to the same formulation as \Cref{eq:Koopman-closed-autonomous}.

    Thus, it only remains to show that the nonlinear component $\psi(x_1, x_2)$ in \cref{eq:lifting-for-CAP} is independent of $x_1$. We consider the state evolution from the initial state to step $n_z-1$ using both \cref{eqn:Koop-thm2} and \cref{eqn:multi-thm2}. From the Koopman linear model \cref{eqn:Koop-thm2}, we obtain 
    \begin{equation}
    \label{eqn:prop-koop}
    \begin{bmatrix}
        x \\ x(1) \\ \vdots \\ x(n_z-1)
    \end{bmatrix} = \mathcal{O} \begin{bmatrix}
        x_1 \\ x_2 \\ \psi(x_1, x_2)
    \end{bmatrix} + \mathcal{T} \begin{bmatrix}
        u_0 \\ u_1 \\ \vdots \\ u_{n_z-2}
    \end{bmatrix} ,
    \end{equation}
    where we have  
    \[
    \begingroup
    \setlength\arraycolsep{2pt}
\def\arraystretch{0.85} 
    \mathcal{O} \!\! =\!\! \begin{bmatrix}
        C_\K \\ C_\K A_\K \\ \vdots \\ C_\K A_\K^{n_z-1}
    \end{bmatrix}, 
    \mathcal{T} \!\! =\!\! \begin{bmatrix}
\mathbb{0} & \mathbb{0} & \cdots & \mathbb{0} \\
C_\K B_\K & \mathbb{0} & \cdots & \mathbb{0} \\
C_\K A_\K B_\K & C_\K B_\K & \cdots & \mathbb{0}\\
\vdots & \vdots  & \ddots & \vdots \\
C_\K A^{n_z\!-\!2}_\K B_\K & C_\K A^{n_z\!-\!3}_\K B_\K &  \cdots & C_\K B_\K \\ 
\end{bmatrix}.
\endgroup
    \]
    From the \CAP{} structure \cref{eqn:multi-thm2}, we have
    \begin{equation}
    \label{eqn:prop-CAP}
    \begingroup
    \setlength\arraycolsep{2pt}
\def\arraystretch{0.9} 
    \begin{bmatrix}
        x \\ x(1) \\ \vdots \\ x(n_z-1)
    \end{bmatrix} = \bar{h}(x_2) + \bar{C}x_1 +\bar{D}\begin{bmatrix}
        u_0 \\ u_1 \\ \vdots \\ u_{n_z-2}
    \end{bmatrix},
    \endgroup
    \end{equation}
   where the nonlinearities in $x_2$ are captured by $\bar{h}$, and the linear dependence on $x_1$ and the input $u$ is collected in $\bar{C}$ and $\bar{D}$ respectively. The detailed forms of $\bar{h}, \bar{C}, \bar{D}$ are not important. 
   
   Since \eqref{eqn:prop-koop} and \eqref{eqn:prop-CAP} represent the same state evolution process, we must have 
    \[
    \bar{h}(x_2) + \bar{C} x_1 = \mathcal{O} \begin{bmatrix}
        x_1 \\ x_2 \\ \psi(x_1, x_2)
    \end{bmatrix}.
    \]
   We also know that $\mathcal{O}$ has full column rank because $(A_\K, C_\K)$ is observable. Thus, we have  
    \[
    \mathcal{O}^{\dag} \bar{h}(x_2) + \mathcal{O}^\dag \bar{C}x_1 = \begin{bmatrix}
        x_1 \\ x_2 \\ \psi(x_1, x_2)
    \end{bmatrix} ,
    \]
    implying that the nonlinear component $\psi(x_1, x_2)$ must be in the form 
    $\psi(x_1, x_2) = \tilde{\psi}(x_2) + \tilde{C}x_1.$
    By assumption, $\psi(x_1, x_2)$ does not contain any linear terms in $x_1, x_2$. Therefore, $\tilde{C} =0$ and $\psi(x_1, x_2)$ is independent of~$x_1$. 
    This completes the proof.

\section{Algorithmic certification of \CAP{} structures}
\label{sec:alg-cerification}

We have established that the existence of $N$-step linear predictors is equivalent to the \CAP{} structure, up to a suitable coordinate transformation, cf. \Cref{them:multi-steps}. While this result provides a complete structural characterization, it does not by itself reveal how to explicitly identify such a transformation, even when a system does admit a \CAP{} representation. Moreover, constructing this transformation, or certifying that none exists, is a crucial step in verifying the existence of a Koopman linear embedding, as shown in \Cref{them:accur-Koop}.

In this section, we develop a symbolic algorithm to verify whether a given nonlinear system can be transformed into the \CAP{} structure and, when possible, to explicitly compute the associated similarity transformation.

\vspace{-3mm}
\subsection{Iterative Algorithm for CAP Structures Certification}

Since the proof of \Cref{them:multi-steps} in \Cref{subsec:general-nonlinear} is constructive, it naturally leads to a symbolic verification procedure. We begin with a nonlinear system of the form \cref{eqn:nonlinear-concen} with $k=1$, corresponding to a one-step linear predictor. At each iteration, we examine the rank of the coefficient matrix $M_k$
 and apply \Cref{proposition:k+1-linear-predictor}. This procedure either certifies the existence of a \CAP{} structure or further concentrates the nonlinear component into a lower-dimensional state. As established in the proof of \Cref{them:multi-steps}, the iteration terminates after at most $n$ steps consisting of $n-1$ nonlinearity concentration steps (statement~2 in \Cref{proposition:k+1-linear-predictor}) followed by a final verification step for the \CAP{} structure (statement 1 in \Cref{proposition:k+1-linear-predictor}).
 
\begin{algorithm}[t]
\caption{Certification of $\infty$-step Linear Predictor}
\label{alg:multi-check}
\begin{algorithmic}[1]
  \STATE \textbf{Input:} Nonlinear system $x^+ = \Phi(x) + Bu$, $x \in \mathbb{R}^n$;
  \STATE \textbf{Output:} binary indicator and matrix $T \in \mathbb{R}^{n \times n}$;
  \STATE \textbf{Initialization:} \!$\textrm{flag} \!\!=\!\! 0$, \!$\textrm{iter} \!\!=\!\! 0$, \!$\Phi_0 \!\!=\!\!\Phi$, \!$B_0 \!\!=\!\! B$, \!$\Gamma_0 \!\!=\!\! \texttt{None}$;
  \WHILE{$\textrm{flag} = 0$} 
    \STATE Compute $\hat{n} := \#\textrm{number of rows of} \ B_{\textrm{iter}}$;
    \STATE Decompose subsystem: 
    \[
    (\Phi_{\text{iter}+1}, \! B_{\text{iter}+1}, \! \Gamma_{\text{iter}+1}, \! T^*, \! \textrm{flag}) \!=\! \texttt{SCD}(\Phi_{\text{iter}}, \! B_{\text{iter}}, \! \Gamma_{\text{iter}});
    \]
    \vspace{-5mm}
    \IF{$\textrm{flag} = 0$}
        \STATE $T_{\textrm{iter}} := \col([I_{n-\hat{n}}, \mathbb{0}], [\mathbb{0}, T^*])$;
        \STATE $\textrm{iter} = \textrm{iter} + 1$;
    \ENDIF
  \ENDWHILE
  \IF{flag = 1}
    \STATE indicator = ``\texttt{Exists!}", $T = \prod_{i = 0}^{\textrm{iter}-1} T_i$;
  \ELSE
    \STATE indicator = ``\texttt{Does not exist.}", $T = \texttt{None}$;
  \ENDIF
\end{algorithmic}
\end{algorithm}

The overall procedure is summarized in \Cref{alg:multi-check}, which relies on a key subroutine—\underline{S}ubsystem \underline{C}ertification and \underline{D}ecomposition (\texttt{SCD})—presented in \Cref{alg:SCD}. The \texttt{SCD} routine serves as a concrete instantiation of \Cref{proposition:k+1-linear-predictor} and is repeatedly invoked within the main algorithm. We note that the invertible matrix $\bar{T}$ computed in Step~13 of \Cref{alg:SCD} is generally not unique and can be obtained using standard numerical techniques such as singular value decomposition (SVD) or Gauss–Jordan elimination \cite{trefethen2022numerical} (see \Cref{lemma:sys-decompose} in \Cref{appendix:k+1-linear-predictor}). Moreover, the affineness of a function with respect to selected variables can be verified by checking whether the corresponding Jacobian exists and is constant. We have implemented \Cref{alg:multi-check,alg:SCD} in Python \footnote{See our code at  
 \url{https://github.com/soc-ucsd/Koopman-linear-embedding}.}.

The termination of \Cref{alg:multi-check} leads to following result.

\begin{proposition}[Termination of \Cref{alg:multi-check}]
Algorithm~\ref{alg:multi-check} terminates in at most $n$ iterations.
If the nonlinear system admits an $\infty$-step linear predictor, then
the algorithm returns a similarity transformation that yields a CAP structure; otherwise, it verifies that no $\infty$-step linear predictor exists.
\end{proposition}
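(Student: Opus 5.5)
The proof will track a monotone potential through the loop of \Cref{alg:multi-check}: the dimension of the current nonlinear generator, that is, the number of rows $\hat{n}$ of $B_{\textrm{iter}}$ handed to \texttt{SCD}. Since \texttt{SCD} instantiates \Cref{proposition:k+1-linear-predictor}, each call with $\textrm{flag}=0$ corresponds to statement~2: the block $x_k$ (dimension $n_k$) is split into $\bar{x}_k$ and $\bar{x}_{k+1}$, where $\bar{x}_k$ has dimension $r_k\ge 1$. Hence the generator dimension drops from $n_k$ to $n_k-r_k\le n_k-1$. The initial dimension is at most $n$, and a concentration step needs $1\le r_k<n_k$, so the generator dimension is at least $1$ whenever statement~2 applies. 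Therefore at most $n-1$ concentration steps can occur, and the next call must fall into the terminal case where $M_k$ is $\mathbb{0}$ or of full row rank. Adding this final step gives at most $n$ iterations. The base call is $k=1$ with $\Phi_0=\Phi$ and $B_0=B$, where the rank of $B$ plays the role of $M_k$.

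For the terminal call I would split into the two outcomes \texttt{SCD} can return.

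\emph{Success branch.} If $M_k=\mathbb{0}$, the last block is autonomous and the system is already in \CAP{} form: group $x_{1:k-1}$ as $\tilde{x}_1$ and $x_k$ as $\tilde{x}_2$. If $M_k$ has full row rank, the $(k+1)$-step affine-composition check (via \Cref{lemma:func-affine-rel}, as in Case~2 of \Cref{sec:low-dimensional}) either certifies that the $g_i$ are affine, giving an affine system and hence \CAP{} with a trivial generator, or finds a non-affine $g_i$. In the success branch, the returned $T=\prod_i T_i$ is invertible, since each $T_i=\col([I,\mathbb{0}],[\mathbb{0},T^*])$ is block diagonal with invertible $T^*$. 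It yields the \CAP{} form by composing the coordinate changes of \Cref{proposition:k+1-linear-predictor}.

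\emph{Failure branch.} \texttt{SCD} returns failure exactly when some affinity test fails. This can happen in three ways: the initial dynamics are not of the form $\Phi(x)+Bu$; some $g_i$ is not affine in the required variables at a concentration step; or $g$ is not affine in the full-rank terminal case. I would argue by contraposition. If an $\infty$-step linear predictor exists, then by \Cref{them:multi-steps} and the necessity direction of \Cref{proposition:k+1-linear-predictor}, each affinity condition tested at iteration $k$ is implied by the existence of a $(k+1)$-step linear predictor. So no test can fail, and a failure certifies non-existence. Conversely, if the algorithm succeeds, the \CAP{} form gives $N$-step predictors for all $N$ by the sufficiency remark after \Cref{them:multi-steps}. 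Thus the indicator is correct in both directions.

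The main obstacle is making precise that the algorithm's tests are exactly the necessary conditions used in \Cref{proposition:k+1-linear-predictor}, and not merely sufficient ones. In particular, one must check that a failed test cannot be an artifact of the non-unique choice of $\bar{T}$. I would handle this by noting that the affinity conditions (constant Jacobians on the open convex slices) are invariant under the invertible block-coordinate changes involved. The rank $r_k$ is likewise invariant, so the dimension count, and hence the iteration bound, is independent of these choices.
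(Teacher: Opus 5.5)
Your proposal is correct and follows essentially the reasoning the paper relies on. The paper gives no separate proof; it points back to the proof of \Cref{them:multi-steps}, where \texttt{SCD} instantiates \Cref{proposition:k+1-linear-predictor}, each concentration step strictly shrinks the generator so at most $n-1$ of them precede one terminal check, the affinity tests are exactly that proposition's necessary conditions under the standing hypotheses of \Cref{them:multi-steps} (open convex $\mathcal{X}$, open $\mathcal{U}$, surjective $f$), and success certifies a \CAP{} form. Two small corrections: \texttt{SCD} never tests whether the system has the form $\Phi(x)+Bu$, since that is an input precondition of Algorithm~\ref{alg:multi-check} rather than a failure mode; and your $\bar{T}$-invariance argument is unnecessary, because the necessity argument of \Cref{proposition:k+1-linear-predictor} applies verbatim to any admissible $\bar{T}$.
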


\begin{algorithm}[t]
\caption{\small \underline{S}ubsystem \underline{C}ertification and \underline{D}ecomposion (SCD)}
\label{alg:SCD}
\begin{algorithmic}[1]
  \STATE \textbf{Input:} Subsystem $x^+ = \Phi(x) + Bz$, $x \in \mathbb{R}^{\hat{n}}$ and residual nonlinearity $\Gamma(x)$
  \STATE \textbf{Output:} $\Phi^*, B^*, \Gamma^*, T^*, \textrm{flag}$;
  \STATE \textbf{Initialization:} $\Phi^*, B^*, \Gamma^*, T^* = \texttt{None}$, $\textrm{flag} = 0$;
  \IF{$B$ is full row rank}
  \IF{$\Phi(x)$ and $\Gamma(x)$ are affine on $x$}
    \STATE $\textrm{flag} = 1$; \quad $\triangleright$ The $\infty$-step linear prediction exists
  \ELSE
    \STATE $\textrm{flag} = 2$; \quad $\triangleright$ No $\infty$-step linear prediction
  \ENDIF
  \ELSIF{$B = \mathbb{0}$}
    \STATE $\textrm{flag} = 1$; \quad $\triangleright$ The $\infty$-step linear prediction exists
  \ELSE
  \STATE Find an invertible matrix $\bar{T}$, such that $\col(\bar{D}, \mathbb{0}) := \bar{T}B$, where $\bar{D}$ has full row rank; and compute 
    \[
    \begin{aligned}
       \begin{bmatrix} x_1^+ \\ x_2^+ \end{bmatrix}
       & = \bar{T} x^+  =  \bar{T}\Phi(\bar{T}^{-1}\col(x_1, x_2))\! +\! \bar{T} B z \\
       &\!=\! \begin{bmatrix} f_1(x_1,x_2)\! +\! \bar{D} z \\ f_2(x_1,x_2) \end{bmatrix};
    \end{aligned}
    \vspace{-2mm}
    \]
  \STATE Transform $\Gamma(x)$ into the new coordinate using $\bar{T}$: 
  $$
  \bar{\Gamma}(x_1, x_2) := \Gamma(\bar{T}^{-1}\col(x_1, x_2));
  \vspace{-4mm}
  $$
  \IF{$f_1, f_2, \bar{\Gamma}$ are affine on $x_1$}
  \STATE Obtain affine forms: $f_1(x_1, x_2) \!\!=\!\! \Phi_1(x_2)\! +\! B_1 x_1,$ $f_2(x_1, x_2)\! =\! \Phi_2(x_2)\!+\! B_2 x_1, 
  \bar{\Gamma}(x_1, x_2) \!=\! \gamma(x_2) \!+\! Cx_1$;
  \vspace{-3mm}
  \STATE Separate nonlinearity: 
  $\Phi^* = \Phi_2, B^* = B_2, \Gamma^* = \col(\gamma, \Phi_1), T^* = \bar{T};$ 
  \ELSE
  \STATE $\textrm{flag} = 2$; \quad $\triangleright$ No $\infty$-step linear prediction
  \ENDIF
  \ENDIF
\end{algorithmic}
\end{algorithm}

\subsection{An illustrative example} 
We here revisit the discrete-time nonlinear system in \Cref{example:concise}, where we obtained a Koopman linear embedding. While the construction implicitly followed the logic of \Cref{alg:multi-check,alg:SCD}, the algorithms were not made explicit. We now apply them to the same system to illustrate the verification process. 
    
Consider the nonlinear system $x^+ = \Phi(x) + Bu$, where we have $x \in \mathbb{R}^3, u \in \mathbb{R}$,  $B := [1, -2, 2]^\tr$ and 
    \[
    \Phi(x) := 
    \begin{bmatrix}
        (x_2+x_3)^2 + (x_1+x_2) \\
        (x_2+x_3)^2\cdot(x_2+x_3-1)+x_1 \\
        (x_2+x_3)^2\cdot(1-x_2-x_3)-x_1+0.5x_2+0.5x_3
    \end{bmatrix}.
    \]
    This system can be transformed into the \CAP{} structure after three iterations of the while-loop in \Cref{alg:multi-check}. 

    \textbf{Iteration 1:} Since $B$ is not of full row rank or zero, following steps $12$ to $14$ in \Cref{alg:SCD}, we change the coordinate to separate states of the system into 1) those fully controlled by $u$ and 2) those not controlled by $u$. Let $\bar{x} = \col(\bar{x}_1, \bar{x}_2, \bar{x}_3) := \bar{T}_0x$ be the new coordinate and $\bar{T}_0:=\col([1 \ 0 \ 0], [2 \ 1 \ 0], [-2 \ 0 \ 1])$ be the similarity transformation matrix. We have 
    \[
    \bar{x}^+ = \bar{T}_0 \Phi(\bar{T}_0^{-1}\bar{x}) + \bar{T}_0 B u 
    = \begin{bmatrix}
        f_1(\bar{x}_1, \col(\bar{x}_2, \bar{x}_3))+\bar{D}u \\
        f_2(\bar{x}_1, \col(\bar{x}_2, \bar{x}_3))
    \end{bmatrix}
    \]
    and we can write the separated form explicitly as
    \[
    \begin{aligned}
    \bar{x}_1^+ &= f_1(\bar{x}_1, \col(\bar{x}_2, \bar{x}_3))+\bar{D}u  \\
    &:= (\bar{x}_2 + \bar{x}_3)^2 + \bar{x}_2 - \bar{x}_1 + u, \\
    \begin{bmatrix}
        \bar{x}_2^+ \\ \bar{x}_3^+
    \end{bmatrix} & = f_2(\bar{x}_1, \col(\bar{x}_2, \bar{x}_3)) \\
    & := \begin{bmatrix}
        (\bar{x}_2\!+\! \bar{x}_3)^2\cdot(\bar{x}_2\!+\!\bar{x}_3\!+\!1)\!+\!2\bar{x}_2\!-\!\bar{x}_1 \\
        (\bar{x}_2\!+\! \bar{x}_3)^2\cdot(-\bar{x}_2\!-\!\bar{x}_3\!-\!1)\!-\!1.5\bar{x}_2\!+\!0.5\bar{x}_3\!+\!\bar{x}_1
    \end{bmatrix}.
    \end{aligned}
    \]
    We note that $\Gamma(\cdot)$ is $\texttt{None}$ for the first iteration so no transformation is needed.
    
    As both functions $f_1$ and $f_2$ are affine on $\bar{x}_1$, following steps $15$ to $17$ in \Cref{alg:SCD}, we have 
    \[
    \begin{aligned}
    \Phi^* \!&=\! \Phi_2 \Big(\!\begin{bmatrix}
        \bar{x}_2 \\ \bar{x}_3
    \end{bmatrix}\! \Big)\! :=\! \begin{bmatrix}
        (\bar{x}_2\!+\! \bar{x}_3)^2\cdot(\bar{x}_2\!+\!\bar{x}_3\!+\!1)\!+\!2\bar{x}_2 \\
        \!(\bar{x}_2\!+\! \bar{x}_3)^2\cdot(-\bar{x}_2\!-\!\bar{x}_3\!-\!1)\!-\!1.5\bar{x}_2\!+\!0.5\bar{x}_3
    \!\end{bmatrix}, \\
    \Gamma^*\! &= \!\Phi_1 \Big(\!\begin{bmatrix}
        \bar{x}_2 \\ \bar{x}_3
    \end{bmatrix}\!\Big) \!:=\! (\bar{x}_2 + \bar{x}_3)^2 + \bar{x}_2, \ B^* = B_2 := [-1 \ 1]^\tr.
    \end{aligned}
    \]
    From step $8$ in \Cref{alg:multi-check}, we have $T_0 := \bar{T}_0$ as $\bar{n} = n$ for the first iteration. 
    
     \textbf{Iteration 2:} We then consider the propagation of the subsystem with the state $\col(\bar{x}_2, \bar{x}_3)$ and take $\bar{x}_1$ as the external ``input". Similarly, as $B_2$ is not of full row rank or zero, we come to step $12$ in \Cref{alg:SCD}. With $\bar{T}_1:=\col([-1 \ 0], [1 \ 1])$ and $\tilde{x} = \col(\tilde{x}_2, \tilde{x}_3) := \bar{T}_1 \col(\bar{x}_2, \bar{x}_3)$, the coordinate transformed subsystem becomes
    \[
    \tilde{x} = \begin{bmatrix}
        \tilde{x}_2^+ \\ \tilde{x}_3^+
    \end{bmatrix} = 
    \begin{bmatrix}
        -\tilde{x}_3^2 - \tilde{x}_3^3+2\tilde{x}_2 + \bar{x}_1 \\
        0.5 \tilde{x}_3
    \end{bmatrix},
    \]
    In the new coordinate, the transformed nonlinearity $\bar{\Gamma}$ becomes
    \[
    \bar{\Gamma}(\tilde{x}_2, \tilde{x}_3) = \Gamma(\bar{T}_1^{-1}\col(\tilde{x}_2, \tilde{x}_3)) = \tilde{x}_3^2 -\tilde{x}_2.
    \]
    
    It is obvious that the propagation of $\tilde{x}_2, \tilde{x}_3$ and the function $\bar{\Gamma}$ are affine on $\tilde{x}_2$. Thus, following steps $16$ to $18$ in \Cref{alg:SCD}, we have 
    \[
    \Phi^* = 0.5 \tilde{x}_3, \quad B^* = 0, \quad \Gamma^* = \col(\tilde{x}_3^2, -\tilde{x}_3^2-\tilde{x}_3^3).
    \]
    From step~$8$ in \Cref{alg:multi-check}, we have $T_1 \!:=\! \col([1 \ \mathbb{0}], [\mathbb{0} \ \bar{T}_1])$ $=\col([1 \ 0\ 0], [0 \ -1 \ 0], [0 \ 1 \ 1])$.  
    
     \textbf{Iteration 3:} Since $B=0$ from the iteration 2, \Cref{alg:SCD} comes to step $10$ and we have $\textrm{flag} = 1$. \Cref{alg:multi-check} terminates, and the \CAP{} structure is identified successfully. 

     \textbf{$\infty$-step linear prediction:} Combining $T_0$ and $T_1$, we obtain the transformation $T$ in \Cref{them:multi-steps}, that is 
    \[
    T = T_1 \cdot T_0 = \begin{bmatrix}
        1 & 0 & 0 \\-2 & -1 & 0 \\ 0 & 1 & 1
    \end{bmatrix}.
    \]
    The transformed nonlinear system can be written as  
    \[
    \begin{bmatrix}
        \col(\tilde{x}_1^{+}, \tilde{x}_2^{+}) \\ \tilde{x}_3^{+} 
    \end{bmatrix} = \begin{bmatrix}
        g(\tilde{x}_3) + C \col(\tilde{x}_1, \tilde{x}_2) +D u \\
        h(\tilde{x}_3)
    \end{bmatrix},
    \]
    where we have $\tilde{x}\! :=\! \col(\tilde{x}_1, \tilde{x}_2, \tilde{x}_3) \!:=\! Tx$, $h(\tilde{x}_3) \!:=\! 0.5 \tilde{x}_3$ and  
    \[
    g(\tilde{x}_3) := \begin{bmatrix}
        \tilde{x}_3^{2} \\ -\tilde{x}_3^{2}-\tilde{x}_3^{3}
    \end{bmatrix}, C := \begin{bmatrix}
        -1 & -1  \\ 1 & 2
    \end{bmatrix} , D := \begin{bmatrix}
        1 \\ 0
    \end{bmatrix}.
    \] 
    It is clear that all nonlinearity for the propagation of states $\tilde{x}_1$ and $\tilde{x}_2$ comes from $\tilde{x}_3$, which evolves autonomously.

\section{Conclusion}
\label{sec:conclusion}
This work has provided sufficient and necessary conditions for a discrete-time controlled nonlinear system to admit a finite-dimensional Koopman linear embedding. Two key intermediate concepts are $N$-step linear predictors and the control-affine preserved (\CAP{}) structure, which are shown to be equivalent. The \CAP{} structure isolates the nonlinear dynamics into a self-evolving autonomous subsystem. The existence of a Koopman linear embedding further enforces the autonomous nonlinear subsystem to admit a Koopman invariant subspace that captures the nonlinearity of the full system. From this perspective, Koopman linear embeddings serve as a bridge between two well-understood classes of systems: LTI systems and autonomous systems admitting finite-dimensional Koopman representations. The proposed verification procedure for the \CAP{} structure can be incorporated as a preliminary step in existing Koopman-based control frameworks. It provides two important criteria: 1) whether a local modeling strategy or Koopman-based model beyond Koopman linear embedding is needed, and 2) when further refinement of lifting functions is no longer effective due to intrinsic structural limitations. Future directions include developing a data-driven verification process for the \CAP{} structure, constructing approximate \CAP{} structures, and quantifying how deviations from the \CAP{} structure affect the predictive performance of approximate Koopman embeddings.

\appendix

\subsection{Proof of \Cref{proposition:linear-approximation}}
\label{appendix:linear-approximation}
We denote $\bar{u} := u_{0:N-1}$ for notational simplicity. Fix an arbitrary \( x \in \mathcal{X} \) and define \( g_x(\bar{u}) := f_N(x,\bar{u}) \). By hypothesis, there exists a sequence of functions $g_k(\bar{u}) := \phi_k + B_k \bar{u}$, with $\phi_k \in \mathbb{R}^n$ and $B_k \in \mathbb{R}^{n \times mN}$, such that
\[
\sup_{\bar{u} \in \bar{\mathcal{U}}} \left\| g_x(\bar{u}) - g_k(\bar{u}) \right\|_2 \leq \frac{1}{k}. 
\]
That is, the function \( g_x(\bar{u}) \) is \textit{uniformly} approximated over \( \bar{\mathcal{U}} \) by affine functions $\{g_k\}_{k=1}^{\infty}$. 
The limit function $g_x(\bar{u})$ must also be affine in \( \bar{u} \) \cite[Theorem 1.21]{rudin1974functional}, and hence there exist \( \Phi(x) \in \mathbb{R}^n \) and \( B(x) \in \mathbb{R}^{n \times mN} \) such that
\[
f_N(x,\bar{u}) = \Phi(x) + B(x) \bar{u}.
\]

We now show that \( B(x) \) is in fact independent of \( x \). Fix an arbitrary $x \in \mathcal{X}$, for any $\epsilon > 0$, we have 
\begin{equation}
\label{eqn:uniform-converge-u}
\|\Phi(x) + B(x) \bar{u} - \Phi_\epsilon(x) - B_\epsilon \bar{u}\|_2 \le \epsilon, \quad \forall \bar{u} \in \bar{\mathcal{U}}.
\end{equation}
Let $C_{1\epsilon} := \Phi(x) - \Phi_\epsilon(x)$ and $C_{2\epsilon} := B(x) - B_\epsilon$. Since $\bar{\mathcal{U}}$ has nonempty interior, we can choose $mN+1$ affinely independent input sequences  $\bar{u}_1, \ldots, \bar{u}_{mN+1} \in \mathcal{B}_r(\tilde{u})$ and denote $M := [\col(1, \bar{u}_1), \ldots, \col(1, \bar{u}_{mN+1})]$ which is invertible \cite[Box 7A]{lovasz2009matching}. We then have 
\[
\begin{aligned}
&\|\begin{bmatrix}
    C_{1\epsilon} \! \! \! &\!\! C_{2\epsilon}
\end{bmatrix} \! M \|_F\! \\
=&  \sqrt{\|C_{1\epsilon} \!+\! C_{2\epsilon}\bar{u}_1 \|_2^2 \!+\! \cdots \!+\! \|C_{1\epsilon} \!+\! C_{2\epsilon}\bar{u}_{mN+1}\|_2^2}\! \le \! \sqrt{mN\!+\!1} \epsilon,
\end{aligned} 
\]
where the inequality comes from \eqref{eqn:uniform-converge-u}. Thus, we can construct a sequence of $\{[C_{1\epsilon_k}, C_{2\epsilon_k}]\}_{k=1}^{\infty}$ with $\epsilon_k := 1/k$ such that $[C_{1\epsilon_k}, C_{2\epsilon_k}] M \rightarrow \mathbb{0}$ which implies $[C_{1\epsilon_k}, C_{2\epsilon_k}] \rightarrow \mathbb{0}$ as $M$ is invertible. That means $B_{\epsilon_k} \rightarrow B(x)$ as $C_{2\epsilon_k} \rightarrow \mathbb{0}$.

The sequence $\{B_{\epsilon_k}\}_{k=1}^{\infty}$ is independent of~$x$. Thus, the limit of this sequence equals $B(x)$, which must be a constant matrix.

\subsection{A general principle of affine composition}
\label{appendix:func-affine-rel}
We here demonstrate a general principle for the composition of affine functions. Suppose we have the composition $f(g_1(x)+C_1 u, l_1(x)) \!=\! g_2(x)\!+\!C_2 u$. Under suitable conditions, the function $f$ must be affine on its first argument. 
\begin{lemma}[Principle of affine composition] 
\label{lemma:func-affine-rel}
    Let $\mathcal{X} \subseteq \mathbb{R}^n$ and $\mathcal{U} \subseteq \mathbb{R}^m$ be open, $g_1 : \mathcal{X} \to \mathbb{R}^{n_1}$, $l_1 : \mathcal{X} \to \mathbb{R}^{n_2}$ be arbitrary functions,  and $C_1 \in \mathbb{R}^{n_1 \times m}$ of full row rank. Consider  $\bar{\mathcal{X}} := \{(x_1, x_2) \in \mathbb{R}^{n_1+n_2} \ |  (x_1, x_2) = (g_1(x)+C_1 u, l_1(x)), \forall x \in \mathcal{X}, u \in \mathcal{U}\}$, and let $f:\bar{\mathcal{X}} \rightarrow \mathbb{R}^{n_3}$ be an arbitrary function. Suppose that:
    \begin{enumerate}[label=(\textbf{A\arabic*})]
    \item The set $\bar{\mathcal{X}}$ is open and convex;
    \item There exist a function $g_2:\mathcal{X}\to \mathbb{R}^{n_3}$ and a matrix $C_2\in\mathbb{R}^{n_3\times m}$ such that
    \begin{equation}
    \label{eq:affine-recursion-identity-general}
        f(g_1(x) + C_1 u, l_1(x)) = g_2(x) + C_2u, 
    \end{equation}
    for all $x\in\mathcal{X}, u\in \mathcal{U}$. 
\end{enumerate}
Then, $f$ is affine on $\mathrm{proj}_{x_1}(\bar{\mathcal{X}})$; that is, there exists a matrix $C \in \mathbb{R}^{n_3 \times n_1}$ and a function $g:\mathrm{proj}_{x_2}(\bar{\mathcal{X}}) \to \mathbb{R}^{n_3}$ such that
    \[
    f(x_1, x_2) = C x_1 + g(x_2), \quad \forall (x_1, x_2) \in \bar{\mathcal{X}}.
    \]
\end{lemma}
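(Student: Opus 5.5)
The plan is to follow the slice argument of \Cref{lem:part-control-affine}, now with the second argument of $f$ produced by an arbitrary map $l_1$ rather than by a coordinate of the state. Fix an arbitrary point $(y_1,y_2)\in\bar{\mathcal{X}}$. By the definition of $\bar{\mathcal{X}}$ there exist $\tilde x\in\mathcal{X}$ and $\tilde u\in\mathcal{U}$ with $y_1=g_1(\tilde x)+C_1\tilde u$ and $y_2=l_1(\tilde x)$.

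\textbf{Step 1: exact increments in the first argument.} Let $C_1^\dagger$ be a right inverse of $C_1$, which exists because $C_1$ has full row rank. Since $\mathcal{U}$ is open, for all $\varepsilon\in\mathbb{R}^{n_1}$ small enough we have $\tilde u_\varepsilon:=\tilde u+C_1^\dagger\varepsilon\in\mathcal{U}$. Then $(y_1+\varepsilon,y_2)=(g_1(\tilde x)+C_1\tilde u_\varepsilon,\,l_1(\tilde x))\in\bar{\mathcal{X}}$. Applying \eqref{eq:affine-recursion-identity-general} at $(\tilde x,\tilde u_\varepsilon)$ and at $(\tilde x,\tilde u)$ and subtracting gives
\[
f(y_1+\varepsilon,y_2)-f(y_1,y_2)=C_2C_1^\dagger\varepsilon .
\]
This identity is exact, not just first order. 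Hence, for fixed $y_2$, the map $x_1\mapsto f(x_1,y_2)$ is differentiable at $y_1$ with Jacobian $C:=C_2C_1^\dagger$. This matrix does not depend on $(y_1,y_2)$, on $\tilde x$, or on $\tilde u$.

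\textbf{Step 2: affinity on slices.} For each $x_2\in\mathrm{proj}_{x_2}(\bar{\mathcal{X}})$, consider the slice $\bar{\mathcal{X}}_{x_2}:=\{x_1\mid(x_1,x_2)\in\bar{\mathcal{X}}\}$. By (A1) it is open and convex, hence connected. On this slice the map $x_1\mapsto f(x_1,x_2)$ has constant gradient $C$ by Step 1. \Cref{lem:constant-gradient-affine} then gives a vector $g(x_2)$ such that $f(x_1,x_2)=Cx_1+g(x_2)$ for all $x_1\in\bar{\mathcal{X}}_{x_2}$. Defining $g$ pointwise in this way over $\mathrm{proj}_{x_2}(\bar{\mathcal{X}})$ yields the claimed representation on all of $\bar{\mathcal{X}}$.

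\textbf{Main obstacle.} The delicate point is Step 1. A point $(y_1,y_2)$ may be represented by several pairs $(\tilde x,\tilde u)$, and we must be sure the perturbed point still lies in $\bar{\mathcal{X}}$ while sharing the same $x_2$-coordinate. Both issues are resolved by keeping $\tilde x$ fixed and moving only $u$, since $u$ never enters $l_1$. For the same reason, the slope $C_2C_1^\dagger$ is uniform no matter which representation is chosen. A secondary subtlety is that openness of $\bar{\mathcal{X}}$ alone does not make the slices connected; this is why convexity in (A1) is needed. Without it, the offset $g$ could jump between components of a slice.
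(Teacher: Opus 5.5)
Your proposal is correct and follows essentially the same route as the paper's proof. Represent a point of $\bar{\mathcal{X}}$ through some $(\tilde x,\tilde u)$ and perturb only the input by $C_1^\dagger\varepsilon$, which gives the exact increment $C_2C_1^\dagger\varepsilon$ along each slice; then apply \Cref{lem:constant-gradient-affine} on the open convex slices $\bar{\mathcal{X}}_{x_2}$ to conclude $f(x_1,x_2)=C_2C_1^\dagger x_1+g(x_2)$.
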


\begin{proof}
Fix any $x_2 \in \mathrm{proj}_{x_2}(\bar{\mathcal{X}})$, and define the slice $\bar{\mathcal{X}}_{x_2} := \{x_1 \in \mathbb{R}^{n_1}\ | \  \col(x_1, x_2) \in \bar{\mathcal{X}} \}$ and let $F_{x_2}(x_1) := f(x_1, x_2)$ which maps from $\bar{\mathcal{X}}_{x_2}$ to $\mathbb{R}^{n_3}$. We will prove $F_{x_2}$ is an affine function over $\bar{\mathcal{X}}_{x_2}$. The set $\bar{\mathcal{X}}_{x_2}$ is open and convex in $\mathbb{R}^{n_1}$ as $\bar{\mathcal{X}}$ is open and convex in $\mathbb{R}^{n_1+n_2}$ from assumption \textbf{(A1)}.

We first prove $F_{x_2}$ is differentiable over $\bar{\mathcal{X}}_{x_2}$ by considering the limit for any $x_1 \in \bar{\mathcal{X}}_{x_2}$
\[
\lim_{\epsilon \rightarrow 0} \frac{\|F_{x_2}(x_1 + \epsilon) - F_{x_2}(x_1) - T\epsilon\|_2}{\|\epsilon\|_2},
\]
where $T = C_2 C_1^\dag$. By construction, there exist $\tilde{x} \in \mathcal{X}, \tilde{u} \in \mathcal{U}$ such that
$
l_1(\tilde{x}) = x_2, \quad  g_1(\tilde{x}) + C_1 \tilde{u} = x_1.
$  

Since $\mathcal{U}$ is open, there exists $\delta$ such that $\tilde{u} + w\in \mathcal{U}$ for all $\|w\|_2 < \delta$. Because $C_1$ has full row rank, we can construct an associated perturbed input 
$\tilde{u}_\epsilon = \tilde{u} + C_1^\dag \epsilon \in \mathcal{U}$ 
for sufficiently small $\epsilon$ with $\|C_1^\dag \epsilon\|_2 \!<\! \delta$, which satisfies
$
x_1 + \epsilon \!=\! g_1(\tilde{x}) + C_1\tilde{u}_\epsilon. 
$ 

We can then apply the assumption \textbf{(A2)} and obtain
\[
\begin{aligned}
F_{x_2}(x_1) & = f(g_1(\tilde{x}) + C_1 \tilde{u}, l_1(\tilde{x})) = g_2(\tilde{x}) + C_2\tilde{u}, \\
F_{x_2}(x_1 + \epsilon) &= f(g_1(\tilde{x}) + C_1 \tilde{u}_\epsilon, l_1(\tilde{x})) = g_2(\tilde{x}) + C_2\tilde{u}_\epsilon.
\end{aligned}
\]
Thus, we can derive the limit as 
\[
\begin{aligned}
& \lim_{\epsilon \rightarrow 0} \frac{\|F_{x_2}(x_1 + \epsilon) - F_{x_2}(x_1) - T\epsilon\|_2}{\|\epsilon\|_2} \\
= & \lim_{\epsilon\rightarrow 0} \frac{\|C_2(\bar{u}_\epsilon - \bar{u}) -T\epsilon\|_2}{\|\epsilon\|_2}  
= \lim_{\epsilon \rightarrow 0} \frac{\|C_2C_1^\dag \epsilon - T\epsilon\|_2}{\|\epsilon\|_2} = 0,
\end{aligned}
\]
which implies $F_{x_2} (x_1)$ is differentiable.

Since $F_{x_2}$ is differentiable with derivative $T$ over an open convex set $\bar{\mathcal{X}}_{x_2} \subseteq \mathbb{R}^{n_1}$, $F_{x_2}(x_1) - Tx_1$ is constant over $\bar{\mathcal{X}}_{x_2}$ from \Cref{lem:constant-gradient-affine}. We then have $F_{x_2}(x_1) = T x_1 + v_{x_2}$. We finish the proof by letting $g(x_2) = v_{x_2}$ for any $x_2 \in \mathrm{proj}_{x_2}\bar{\mathcal{X}}$.
\end{proof}

\subsection{Proof of \Cref{proposition:k+1-linear-predictor}}
\label{appendix:k+1-linear-predictor}
We first establish statement $2$. We then easily derive statement $1$ using the arguments for statement 2.

\textbf{Proof of statement 2:}
We begin by introducing a basic result which decomposes the state $x_k$ into $\bar{x}_k$ and $\bar{x}_{k+1}$ such that $\bar{M}_k$ is of full row rank.

\begin{lemma}[Decomposition of fully actuated state]
    \label{lemma:sys-decompose}
    Consider a dynamic system $x^+ = f(x) + C z$, where $x \in \mathbb{R}^{n_1}$ and $z \in \mathbb{R}^{n_2}$. There exists a similarity transformation $T$ (\emph{i.e.}, $\col(x_1, x_2) := Tx$), such that the transformed system dynamics becomes
    \[
    \begin{aligned}
        x_1^+ & = f_1(x_1, x_2) + Dz, \\
        x_2^+ & = f_2(x_1, x_2),
    \end{aligned}
    \]
    where $D$ has full row rank.
\end{lemma}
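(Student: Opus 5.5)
The plan is to pick the new coordinates so that their first block spans the column space of $C$ and the remaining block is annihilated by $C$. Let $r := \mathrm{rank}(C)$.

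The degenerate cases come first. If $r = 0$, there is nothing to do: take $T = I$ and let $x_1$ be empty, so $x_2 = x$ and the dynamics are $x^+ = f(x)$. If $r = n_1$, take $T = I$ with $x_2$ empty and $D = C$, which has full row rank by assumption.

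For $1 \le r < n_1$, the construction is as follows.
\begin{itemize}
\item Choose a basis of $\mathrm{range}(C)$ and complete it to a basis of $\mathbb{R}^{n_1}$. Collect the vectors as the columns of an invertible matrix $S = [S_1 \ S_2]$, with $S_1 \in \mathbb{R}^{n_1 \times r}$ spanning $\mathrm{range}(C)$.
\item Set $T := S^{-1}$ and partition its rows as $T = \col(T_1, T_2)$, with $T_1 \in \mathbb{R}^{r \times n_1}$.
\item Since $TS = I$, we get $T_2 S_1 = \mathbb{0}$. Every column of $C$ lies in $\mathrm{range}(S_1)$, so $T_2 C = \mathbb{0}$.
\item Since $T$ is invertible, $\mathrm{rank}(TC) = r$. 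The bottom block of $TC$ is zero, so $D := T_1 C \in \mathbb{R}^{r \times n_2}$ must have rank $r$, which means $D$ has full row rank.
\end{itemize}
An equivalent route is to row-reduce $C$ by Gauss–Jordan elimination, or to use an SVD $C = U\Sigma V^\tr$ with $T = U^\tr$; either directly gives $TC = \col(D, \mathbb{0})$.

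With $\col(x_1, x_2) := Tx$, the transformed dynamics are
\[
\col(x_1^+, x_2^+) = T f\big(T^{-1}\col(x_1, x_2)\big) + TCz.
\]
Define $f_i(x_1, x_2) := T_i f\big(T^{-1}\col(x_1,x_2)\big)$ for $i = 1, 2$. Reading off the two blocks gives $x_1^+ = f_1(x_1, x_2) + Dz$ and $x_2^+ = f_2(x_1, x_2)$, which is the claimed form.

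None of these steps is a genuine obstacle. The only point needing care is showing that $D$ has full row rank, and that follows from rank invariance under the invertible map $T$. The other detail is to state the degenerate cases as allowing empty blocks, so the lemma holds uniformly in $r$.
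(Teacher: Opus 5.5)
Your proof is correct and is essentially the paper's argument. The paper takes $T = U^\tr$ from an SVD $C = U\Sigma V^\tr$, which is your basis-completion construction with an orthonormal basis of $\mathrm{range}(C)$, a route you also mention; your explicit handling of the cases $r=0$ and $r=n_1$ via empty blocks is a harmless addition that the paper leaves implicit.
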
 
\begin{proof}
    We can use the singular value decomposition (SVD) to decompose $C$ as $C = U \Sigma V^\tr$, where $U \in \mathbb{R}^{n_1 \times n_1}, \Sigma \in \mathbb{R}^{n_1 \times n_2}$ and $V \in \mathbb{R}^{n_2 \times n_2}$. Suppose the rank of the matrix $C$ is $r$. We then have 
    $
    U^\tr U \Sigma V^\tr = \Sigma V^\tr = \col(D, \mathbb{0}),
    $  
where $D \in \mathbb{R}^{r \times n_2}$ and has full row rank. We can now consider a new coordinate $\col(x_1, x_2) := U^\tr x$ which satisfies  
    \[
    \begin{aligned}
    \begin{bmatrix}
        x_1^+ \\ x_2^+
    \end{bmatrix} = U^\tr f(U \begin{bmatrix}
        x_1 \\ x_2
    \end{bmatrix}) + U^\tr C z  
     = \begin{bmatrix}
        f_1(x_1, x_2) + D z \\
        f_2(x_1, x_2)
    \end{bmatrix}.
    \end{aligned}
    \]
That completes the proof.
\end{proof}

We use \cref{lemma:sys-decompose} to decompose the state $x_k$, leading to  
\begin{equation}
    \label{eqn:nonlinear-conen-sep}
    \begin{aligned}
    \bar{x}_1^+ &= M_1 u + C_1 \bar{x}_{1:k-1} + \tilde{g}_1(\bar{x}_k, \bar{x}_{k+1}), \\
    \bar{x}_2^+ &= M_2 \bar{x}_1 + C_2 \bar{x}_{2:k-1} + \tilde{g}_2(\bar{x}_k, \bar{x}_{k+1}),  \\
    \bar{x}_3^+ & = M_3 \bar{x}_2 + C_3 \bar{x}_{3:k-1} + \tilde{g}_3(\bar{x}_k, \bar{x}_{k+1}), \\
    & \ \ \vdots \\
    \bar{x}_{k-1}^+ & = M_{k-1} \bar{x}_{k-2} + C_{k-1} \bar{x}_{k-1} + \tilde{g}_{k-1}(\bar{x}_k, \bar{x}_{k+1}), \\
    \bar{x}_{k}^+ & =  \hat{M}_k x_{k-1} + \tilde{g}_k(\bar{x}_k, \bar{x}_{k+1}), \\
    \bar{x}_{k+1}^+ & = \tilde{g}_{k+1}(\bar{x}_k, \bar{x}_{k+1}),
\end{aligned}
\end{equation}
where $M_i$ and $\hat{M}_k$ are full row rank and $\tilde{g}_{i}(\bar{x}_k, \bar{x}_{k+1}) := g_i(\bar{T}^{-1}\col(\bar{x}_k, \bar{x}_{k+1}))$ for $i = 1,\ldots, k-1$. The state of the system at time step $k$ can be written as 
\begin{equation}
    \label{eqn:dyn-k-1}
    \begin{aligned}
    \bar{x}_1(k) &= h_1(\bar{x}) + D_{1}u_{0:k-1}, \\
    \bar{x}_2(k) &= h_2(\bar{x}) + D_{2}u_{0:k-2}, \\
    & \ \ \vdots \\
    \bar{x}_{k-1}(k) & = h_{k-1}(\bar{x}) + D_{k-1}u_{0:1}, \\
    \bar{x}_{k}(k) & = h_{k}(\bar{x}) + M u_0, \\
    \bar{x}_{k+1}(k) & = h_{k+1}(\bar{x}),
    \end{aligned}
\end{equation}
where all terms related to state $\bar{x}$ are included in functions $h_i(\cdot), i = 1,\ldots, k+1$ and coefficients for linear terms of inputs are represented via matrices $D_i, i = 1,\ldots,k-1$. Their concrete forms are not important. One key fact of the derivation is to obtain $M =\hat{M}_k \prod_{i=1}^{k-1} M_i$, which has full row rank as it is a product of multiple full row rank matrices.

We then obtain an affine relationship using the equivalence of the system recursion and the linear predictor at the step $k+1$. By substituting \eqref{eqn:dyn-k-1} into \eqref{eqn:nonlinear-conen-sep}, we can obtain the state of the system at time step $k$, that is
\begin{equation}
    \label{eqn:dyn-k}
    \begin{aligned}
        \bar{x}_1(k+1) \!=& \tilde{h}_1(\bar{x})\!+\! E_1 u_{0:k} \!+\! \tilde{g}_1(\bar{x}_{k}(k), \bar{x}_{k+1}(k)), \\
        \bar{x}_2(k+1) \!=& \tilde{h}_2(\bar{x}) \!+\! E_2 u_{0:k-1} \!+\! \tilde{g}_2(\bar{x}_{k}(k), \bar{x}_{k+1}(k)), \\
        \vdots  \ &  \\
        \bar{x}_{k}(k+1)  \!=& \tilde{h}_{k}(\bar{x}) + E_{k} u_{0:1} \! +\!  \tilde{g}_{k}(\bar{x}_{k}(k), \bar{x}_{k+1}(k)),\\
        \bar{x}_{k+1}(k+1) \!=&  \tilde{g}_{k+1}(\bar{x}_{k}(k), \bar{x}_{k+1}(k)),
    \end{aligned}
\end{equation}
where $\tilde{h}_i(\bar{x})$ and $E_iu_{0:k-i+1}$, $i = 1,\ldots,k$ include all residual terms related to $\bar{x}$ and $u_{0:k}$. Since the system admits a $k+1$ step linear predictor, we have 
\begin{equation}
        \label{eqn:predictor-k}
        \begin{aligned}
            \bar{x}_1(k+1) & = \hat{h}_1(\bar{x}) + F_1 u_{0:k}, \\
            \bar{x}_2(k+1) & = \hat{h}_2(\bar{x}) + F_2 u_{0:k}, \\
            & \ \ \vdots \\
            \bar{x}_{k}(k+1) & = \hat{h}_{k}(\bar{x}) + F_k u_{0:k}, \\
            \bar{x}_{k+1}(k+1) & = \hat{h}_{k+1}(\bar{x}) + F_{k+1} u_{0:k}. 
        \end{aligned}
\end{equation}
Using the equivalence of \eqref{eqn:dyn-k} and \eqref{eqn:predictor-k} and substituting $\bar{x}_{k}(k), \bar{x}_{k+1}(k)$ from \eqref{eqn:dyn-k-1}, we can obtain
\[
    \begin{aligned}
    \tilde{g}_1(\bar{x}_{k}(k), \bar{x}_{k+1}(k)) & = \tilde{g}_1(h_{k}(\bar{x})+Mu_0, h_{k+1}(\bar{x})) \\
    & = \delta_1(x) + G_1 u_0, \\
    \tilde{g}_2(\bar{x}_{k}(k), \bar{x}_{k+1}(k)) & = \tilde{g}_2(h_{k}(\bar{x})+Mu_0, h_{k+1}(\bar{x}))  \\
    & = \delta_2(x) + G_2 u_0,  \\
    & \ \ \vdots  \\ 
    \tilde{g}_{k}(\bar{x}_{k}(k), \bar{x}_{k+1}(k)) & =  \tilde{g}_{k}(h_{k}(\bar{x})+Mu_0, h_{k+1}(\bar{x})) \\
    & =  \delta_{k}(x) + G_{k} u_0, \\
    \tilde{g}_{k+1}(\bar{x}_{k}(k), \bar{x}_{k+1}(k)) &= \tilde{g}_{k+1}(h_{k}(\bar{x})+Mu_0, h_{k+1}(\bar{x})) \\
    & = \delta_{k+1}(x) + G_{k+1} u_0,  
    \end{aligned}
\]
where $\delta_i(x)$ and $G_iu_0$ correspond to differences between $\tilde{h}_i(x), \hat{h}_i(x)$ and $E_i u_{0:k-1}, F_i u_{0:k-1}$ for $i = 1,\ldots, k+1$ (take $\tilde{h}_{k+1}(x)$ and $E_{k+1}$ as $\mathbb{0}$). All terms related to $u_{1:k}$ vanish as $\tilde{g}_i$ only contains composition with respect to the variable $u_0$, for $i = 1,\ldots, k+1$. 

We finally show functions $\tilde{g}_i(\bar{x}_k, \bar{x}_{k+1})$ are affine on $\bar{x}_k$ for $i = 1,\ldots, k+1$. As $\bar{\mathcal{X}}$ is open and convex and the function $f$ (\emph{i.e.}, system dynamics) is surjective, the reachable set of the system is $\bar{\mathcal{X}}$ for all time steps. Thus, $\mathrm{proj}_{x_{k:k+1}}(\bar{\mathcal{X}})$ which is the set of $(x_{k}(k), x_{k+1}(k))$ is also an open and convex set. Together with the condition that $M$ has full row rank, from \Cref{lemma:func-affine-rel}, we have 
\[
    \tilde{g}_i(\bar{x}_k, \bar{x}_{k+1}) = \bar{g}_i(\bar{x}_{k+1}) + \hat{C}_i \bar{x}_k, \ \text{for} \ i = 1,2,\ldots k+1.
\]
To obtain the representation \eqref{eqn:nonlinear-concen-decomp}, we can let $\bar{M}_i = M_i, \bar{C}_{i} = [C_i \ \hat{C}_i]$ for $i = 1,\ldots,k-1$, and $\bar{M}_{k} = \hat{M}_{k}$, $\bar{C}_k = \hat{C}_k$, $\bar{M}_{k+1} = \hat{C}_{k+1}$. This completes the proof.

\textbf{Proof of statement 1:} It is obvious that, if $M_k = \mathbb{0}$, the system is already in the \CAP{} structure. 

We only consider the case when $M_k$ has full row rank. We show that the system is actually affine and thus has the \CAP{} structure. In this case, we introduce an auxiliary state $\hat{x}$ to \eqref{eqn:nonlinear-concen}
\begin{equation}
\label{eqn:dynamic-cascad-transform}
\begin{aligned}
    \begin{aligned}
    x_1^+ &= M_1 u + C_1 x_{1:k-1} + \hat{g}_1(x_k, \hat{x}), \\
    x_2^+ &= M_2 x_1 + C_2 x_{2:k-1} + \hat{g}_2(x_k, \hat{x}),  \\
    & \ \ \vdots \\
    x_{k-1}^+ & = M_{k-1} x_{k-2} + C_{k-1} x_{k-1} + \hat{g}_{k-1}(x_k, \hat{x}), \\
    x_{k}^+ & =  M_k x_{k-1} + \hat{g}_k(x_k, \hat{x}), \\
    \hat{x}^+ & = \hat{g}_{k+1}(x_k, \hat{x}),
\end{aligned}
    \end{aligned}
\end{equation}
where the auxiliary state $\hat{x} \in \hat{\mathcal{X}}$, and $\hat{g}_i(x_k, \hat{x})$ $ := g_i(x_k)$ for $ i=1,2\ldots,k$. We let $\hat{x}$ propagate autonomously as $\hat{x}^+ = \hat{g}_{k+1}(x_k, \hat{x}) := g_{k+1}(\hat{x})$ with $g_{k+1}(\cdot): \hat{\mathcal{X}} \rightarrow \hat{\mathcal{X}}$ be a surjective function and $\hat{\mathcal{X}}$ be a convex and open set. We can augment states $x_k$ and $\hat{x}$ as $\hat{x}_k$ whose propagation becomes 
\[
\hat{x}_k^+ = \begin{bmatrix}
    x_k^+ \\ \hat{x}^+
\end{bmatrix}= \begin{bmatrix}
    M_k \\ \mathbb{0}
\end{bmatrix} x_{k-1} + \begin{bmatrix}
    \hat{g}_k(x_k,\hat{x}) \\ \hat{g}_{k+1}(x_k, \hat{x})
\end{bmatrix}.
\]
As $\col(M_k, \mathbb{0})$ is not full row rank and the system admits a $(k+1)$-step linear predictor, from statement $1$ with the associated $\bar{T} = I$, the nonlinearity of the system only comes from $\hat{x}$. Thus, we obtain
\[
\begin{aligned}
g_i(x_k) &= \hat{g}_i(x_k, \hat{x}) = H_i x_{k} + \bar{g}_i(\hat{x}),  \\
& = H_i x_{k} + v_i, \quad i = 1,\ldots k,
\end{aligned}
\]
where the third equality comes from the independence of $\hat{x}$ from construction. That illustrates the dynamic system is an affine system. This completes the proof.

\ifCLASSOPTIONcaptionsoff
  \newpage
\fi

\bibliographystyle{IEEEtran}
\bibliography{ref}
\end{document}